\documentclass[10pt,a4paper]{amsart}
\usepackage{fullpage}
\usepackage{amsmath}
\usepackage{amsthm}
\usepackage{amssymb}
\usepackage{mathtools}
\usepackage[only,mapsfrom]{stmaryrd}
\usepackage{graphicx}
\usepackage{cancel} 
\usepackage{color} 
\usepackage[all]{xy} 
\usepackage{tikz}
\usetikzlibrary{arrows,calc}
\usetikzlibrary{cd}
\usepackage{nicefrac}
\usepackage{paralist}
\usepackage{enumitem}
\usepackage[colorlinks,citecolor=blue,linkcolor=blue,urlcolor=blue,filecolor=blue,breaklinks]{hyperref}

\setcounter{tocdepth}{1}
\numberwithin{equation}{section}
\numberwithin{figure}{section}

\newenvironment{itemizeb}%
{\begin{compactitem}

}%
{\end{compactitem}}

\newtheorem{thm}{Theorem}[section]
\newtheorem{athm}{Theorem}

\newtheorem{lem}[thm]{Lemma}
\newtheorem{prop}[thm]{Proposition}
\newtheorem{cor}[thm]{Corollary}

\newtheorem{ques}[thm]{Question}

\newtheorem*{thm*}{Theorem}
\newtheorem*{conj*}{Conjecture}
\newtheorem*{cor*}{Corollary}
\newtheorem*{ques*}{Question}

\theoremstyle{definition}
\newtheorem{rem}[thm]{Remark}
\newtheorem{defn}[thm]{Definition}
\newtheorem{ex}[thm]{Example}
\newtheorem{notation}[thm]{Notation}
\newtheorem*{rem*}{Remark}

% mathbb and mathcal %

\newcommand{\cC}{\mathcal{C}}

\newcommand{\cF}{\mathcal{F}}

\newcommand{\cP}{\mathcal{P}}

\newcommand{\cR}{\mathcal{R}}

\newcommand{\bD}{\mathbb{D}}

\newcommand{\bN}{\mathbb{N}}

\newcommand{\bQ}{\mathbb{Q}}
\newcommand{\bR}{\mathbb{R}}
\newcommand{\bS}{\mathbb{S}}

\newcommand{\bZ}{\mathbb{Z}}

 \newcommand{\BN}{{\mathbb {N}}}
 
 \newcommand{\BR}{{\mathbb {R}}}

 \newcommand{\BZ}{{\mathbb {Z}}}
 
\newcommand{\CC}{{\mathcal {C}}}

 \newcommand{\Aut}{{\mathrm{Aut}}}

\DeclareMathOperator{\Map}{Map}
\DeclareMathOperator{\Homeo}{Homeo}
\DeclareMathOperator{\PMap}{PMap}

\newcommand{\fc}{{\mathfrak{c}}}
\newcommand{\PMapcbar}[1]{\ensuremath{\overline{\PMap_c(#1)}}}
\newcommand{\Ends}{{\mathrm{Ends}}}
\newcommand{\longhookrightarrow}{\ensuremath{\lhook\joinrel\longrightarrow}}
\newcommand{\incl}[3][right]%
{%
\draw[<-,>=#1 hook] #2 to ($ #2!0.5!#3 $);
\draw[->] ($ #2!0.5!#3 $) to #3;%
}

\title{Big mapping class groups with uncountable integral homology}

\author{Martin Palmer}
\address{Institutul de Matematică Simion Stoilow al Academiei Române, 21 Calea Griviței, 010702 Bucharest, Romania}
\email{mpanghel@imar.ro}

\author{Xiaolei Wu}
\address{Shanghai Center for Mathematical Sciences, Jiangwan Campus, Fudan University, No.2005 Songhu Road, Shanghai, 200438, P.R. China}
\email{xiaoleiwu@fudan.edu.cn}

\subjclass[2020]{57K20, 20J06}
\keywords{Big mapping class groups, pure mapping class groups, Torelli groups, group homology, torsion}
\date{12 October 2023}

\begin{document}

\begin{abstract}
We prove that, for any infinite-type surface $S$, the integral homology of the closure of the compactly-supported mapping class group $\PMapcbar{S}$ and of the Torelli group $\mathcal{T}(S)$ is uncountable in every positive degree. By our results in \cite{PalmerWu1} and other known computations, such a statement cannot be true for the full mapping class group $\Map(S)$ for all infinite-type surfaces $S$. However, we are still able to prove that the integral homology of $\Map(S)$ is uncountable in all positive degrees for a large class of infinite-type surfaces $S$. The key property of this class of surfaces is, roughly, that the space of ends of the surface $S$ contains a limit point of topologically distinguished points. Our result includes in particular all finite-genus surfaces having countable end spaces with a unique point of maximal Cantor-Bendixson rank $\alpha$, where $\alpha$ is a successor ordinal. We also observe an order-$10$ element in the first homology of the pure mapping class group of any surface of genus $2$, answering a recent question of G.~Domat.
\end{abstract}
\maketitle

\section*{Introduction}

There has been a recent wave of interest in \emph{big mapping class groups} (mapping class groups of infinite-type surfaces); see \cite{AV20} for a survey. In \cite{PalmerWu1}, the authors recently computed the homology of a large family of big mapping class groups, namely the families of (\emph{$1$-holed} or \emph{punctured}) \emph{binary tree surfaces} (see the introduction of \cite{PalmerWu1} for this terminology). Precisely, the mapping class group of every $1$-holed binary tree surface is \emph{acyclic} and the homology of the mapping class group of every punctured binary tree surface is periodic with $\bZ$ in every even degree and zero in every odd degree. One instance of this result says that the mapping class group $\Map(\bD^2 \smallsetminus \cC)$ is acyclic and that $H_i(\Map(\bR^2 \smallsetminus \cC))$ is $\bZ$ for $i$ even and zero for $i$ odd, where $\cC$ is a Cantor set embedded in the interior of the disc. In particular, in all of these examples, the homology groups $H_i(\Map(S))$ are \emph{finitely generated} for each $i$. Some earlier results on the homology of big mapping class groups -- in degrees $1$ and $2$ -- include: $H_1(\Map(S \smallsetminus \cC)) \cong H_1(\Map(S))$ if $\cC$ is a Cantor set embedded in the interior of a finite-type surface $S$ \cite{CalegariChen2022} (see also \cite{Vlamis21} for three special cases of this) and $H_2(\Map(\bS^2 \smallsetminus \cC)) \cong \bZ/2$ \cite{CC21}.

In this paper we prove a contrasting result: for many infinite-type surfaces $S$, the group $H_i(\Map(S))$ is \emph{uncountable} for all $i>0$. In addition, we prove -- for \emph{all} infinite-type surfaces $S$ -- that $H_i(\PMapcbar{S})$ and $H_i(\mathcal{T}(S))$ are uncountable for all $i>0$, where $\PMapcbar{S}$ and $\mathcal{T}(S)$ denote, respectively, the closure of the compactly-supported mapping class group and the Torelli group of $S$.

Our proofs are built on ideas from \cite{APV20,Domat2022,MalesteinTao21}. In \cite{APV20}, Aramayona, Patel and Vlamis determined $H^1(\PMap(S))$ for any infinite-type surface $S$ of genus at least $2$; in particular, they showed that it is countable. (This was extended to genus $1$ in \cite{DomatPlummer2020}, where it was also shown that $H^1(\PMap(S))$ is uncountable when $S$ has genus $0$.) Along the way they proved that, when $S$ has infinitely many non-planar ends, its pure mapping class group $\PMap(S)$ admits a split-surjection onto the \emph{Baer-Specker group} $\BZ^\BN$. Later, Domat proved that big pure mapping class groups $\PMap(S)$ are never perfect \cite{Domat2022}. Morover, he showed that $H_1(\PMap(S))$ is uncountable for many infinite-type surfaces $S$ and that $H_1(\mathcal{T}(S))$ and $H_1(\PMapcbar{S})$ are uncountable for all infinite-type surfaces $S$. Malestein and Tao \cite{MalesteinTao21} were able to push the results of Domat further and prove that the first homology of the full mapping class group $H_1(\Map(S))$ is uncountable for a certain class of surfaces $S$, including $S = \BR^2 \smallsetminus \BZ$.

\subsection*{Uncountable homology.}

Given a surface $S$, recall that its pure mapping class group $\PMap(S)$ is the subgroup of its mapping class group $\Map(S) = \pi_0(\Homeo(S))$ consisting of all those mapping classes that fix the ends of $S$ pointwise. Its Torelli group $\mathcal{T}(S)$ is the kernel of the natural homomorphism $\Map(S)\to \Aut(H_1(S))$. Recall also that $\PMap_c(S)$ denotes the subgroup of $\Map(S)$ of mapping classes that admit representative homeomorphisms with compact support, and $\PMapcbar{S}$ denotes its closure in $\Map(S)$ in the quotient topology induced by the compact-open topology on $\Homeo(S)$. We note that in general we have inclusions $\mathcal{T}(S) \subseteq \PMapcbar{S} \subseteq \PMap(S) \subseteq \Map(S)$. (The only non-obvious inclusion is the first one: it is explained during the proof of Theorem \ref{thm:torelli} below.) Our first result concerns the first two groups of this nested sequence and holds for \emph{all} infinite-type surfaces $S$.

\begin{athm}[Corollary \ref{cor:closure-of-compactly-supported-mcg} and Theorem \ref{thm:torelli}]
\label{athm:PMap}
Let $S$ be any infinite-type surface. Then the integral homology groups
\[
H_i(\PMapcbar{S}) \quad\text{and}\quad H_i(\mathcal{T}(S))
\]
are uncountable for every $i \geq 1$. Moreover, they each contain $\bigoplus_{\fc} \bZ$ in every degree, where $\fc$ denotes the cardinality of the continuum.
\end{athm}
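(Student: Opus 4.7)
The plan is to produce a split-injective homomorphism $\iota \colon \BZ^{\BN} \hookrightarrow \PMapcbar{S}$ whose image lies inside $\mathcal{T}(S) \subseteq \PMapcbar{S}$ and whose retraction restricts to a retraction of $\iota$ regarded as a homomorphism into $\mathcal{T}(S)$. Then the induced maps on integral homology will be simultaneously split-injective into $H_i(\mathcal{T}(S))$ and $H_i(\PMapcbar{S})$, so both groups will contain $H_i(\BZ^{\BN};\BZ)$ as a direct summand. Rationally, $H_i(\BZ^{\BN};\BQ) \cong \bigwedge^i(\BZ^{\BN} \otimes \BQ)$ has $\BQ$-dimension $\fc$ in every $i \geq 1$ (since $\BZ^{\BN} \otimes \BQ$ is a $\BQ$-vector space of dimension $\fc$), so $H_i(\BZ^{\BN};\BZ)$ has $\BZ$-rank $\fc$ and contains a free abelian subgroup of rank $\fc$. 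This yields the ``contains $\bigoplus_{\fc}\BZ$'' conclusion.

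To construct $\iota$, first choose a pairwise disjoint family of separating simple closed curves $\gamma_n \subset S$ whose supports eventually leave every compact subset of $S$; such a family exists in any infinite-type surface, for example by fixing an end and cutting off progressively deeper neighborhoods. The Dehn twists $\tau_n = T_{\gamma_n}$ are non-trivial, of infinite order, pairwise commuting, and each lies in $\mathcal{T}(S)$ because a separating simple closed curve is null-homologous. Since the supports exit every compactum, for any $(a_n) \in \BZ^{\BN}$ the formal infinite product $\prod_n \tau_n^{a_n}$ converges in the compact-open topology to a homeomorphism whose mapping class lies in $\PMapcbar{S}$ as a limit of its compactly-supported finite truncations; the condition of acting trivially on $H_1(S)$ is preserved in the limit, so it also lies in $\mathcal{T}(S)$. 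Injectivity of the resulting homomorphism $\iota \colon \BZ^{\BN} \to \mathcal{T}(S) \subseteq \PMapcbar{S}$ follows by probing each factor with an auxiliary curve crossing $\gamma_n$ but no other $\gamma_m$.

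The main obstacle is constructing the retraction $r \colon \PMapcbar{S} \to \BZ^{\BN}$ satisfying $r \circ \iota = \mathrm{id}$. Following the detection techniques of Aramayona--Patel--Vlamis, Domat, and Malestein--Tao, the strategy is to build each coordinate $r_n \colon \PMapcbar{S} \to \BZ$ separately and assemble $r = (r_n)_n$. For the $n$-th coordinate, fix a simple arc or curve $\alpha_n$ crossing $\gamma_n$ essentially and disjoint from $\gamma_m$ for $m \neq n$, and define $r_n(\phi)$ as an algebraic count of how $\phi(\alpha_n)$ differs from $\alpha_n$ in a relative-homology or low-nilpotent quotient that is sensitive to separating twists (ordinary $H_1$ is insufficient since separating curves are null-homologous). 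The necessary verifications are (i) that each $r_n$ is a well-defined group homomorphism on all of $\PMapcbar{S}$ rather than only on the compactly-supported subgroup, made possible by pinning the auxiliary data $(\gamma_n,\alpha_n)$ to a fixed compact region, and (ii) that $r_n(\tau_m)$ equals $1$ when $m = n$ and $0$ otherwise, which follows from disjointness of supports. These verifications, together with a careful analysis of how the detection extends continuously from compactly-supported classes to their closure, are expected to be the technical heart of the argument, with everything else following formally from the first paragraph.
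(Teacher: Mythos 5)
Your overall architecture --- a split injection of $\BZ^{\BN}$ into $\PMapcbar{S}$ landing in $\mathcal{T}(S)$, followed by the K{\"u}nneth computation of $H_i(\BZ^{\BN})$ --- has a fatal obstruction at exactly the point you flag as the ``main obstacle'': the retraction $r=(r_n)_n$ cannot exist. Since $\BZ^{\BN}$ is abelian, each coordinate $r_n \colon \PMapcbar{S} \to \bZ$ must factor through $H_1(\PMapcbar{S})$, and you need $r_n(T_{\gamma_n})=1$. But whenever $\gamma_n$ can be placed inside a compact subsurface $S' \subset S$ of genus at least three (which happens for all but finitely many $n$ on any infinite-genus surface, and is unavoidable for the Loch Ness monster, for instance), the twist $T_{\gamma_n}$ is a product of commutators in $\Map(S') \subseteq \PMap_c(S)$, because mapping class groups of compact surfaces of genus $\geq 3$ are perfect (Powell). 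Hence $T_{\gamma_n}=0$ in $H_1(\PMapcbar{S})$ and every homomorphism to $\bZ$ (indeed to any abelian group) kills it. In other words, the composite $\BZ^{\BN} \to \PMapcbar{S} \to H_1(\PMapcbar{S})$ annihilates the finitely supported subgroup $\bigoplus_{\bN}\bZ$, so $\iota$ is not split injective and no ``algebraic count'' $r_n$, however cleverly defined on relative homology or nilpotent quotients, can repair this. This is precisely why Domat's detection of infinite twist products uses the Bestvina--Bromberg--Fujiwara quasimorphism machinery rather than homomorphisms to $\bZ$.

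The paper's route is structured to live with this phenomenon rather than fight it. Since each individual twist dies in the abelianisation, the surviving classes of infinite products $\prod_i (T_{\gamma_i})^{a_i!}$ are \emph{infinitely divisible} in $H_1(\PMapcbar{S})$; one therefore embeds not $\BZ^{\BN}$ but $\bigoplus_{\fc}\bQ$ into $H_1(\PMapcbar{S})$ (Corollaries \ref{cor:embedding-of-Q} and \ref{cor:embedding-of-QQ}, using an almost-disjoint family of infinite subsets of $\bN$ to index the summands). The needed retraction then comes for free from the injectivity of divisible abelian groups (Lemma \ref{lem:divisible-injective}) --- no explicit dual homomorphisms are constructed. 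This factors the standard inclusion $\bigoplus_{\fc}\bZ \hookrightarrow \bigoplus_{\fc}\bQ$ through $\PMapcbar{S}$, and Lemma \ref{lem:lift-uncount} promotes it to an embedding $\Lambda^*(\bigoplus_{\fc}\bZ) \hookrightarrow H_*(\PMapcbar{S})$ in all degrees. Two further points you would still need: in genus $\leq 2$ the separating-twist input must be replaced by commutators of separating twists (pseudo-Anosovs on disjoint compact pieces) to keep the elements in $\mathcal{T}(S)$ while retaining Domat's nonvanishing, and the one-ended case (the Loch Ness monster) requires a separate argument via the Birman exact sequence, since escaping sequences of end-separating curves need at least two ends.
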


\begin{rem}
One might hope that our methods could be used to prove that the homology of the pure mapping class group $H_i(\PMap(S))$ is also uncountable for every $i \geq 1$ and for any infinite-type surface $S$. However, the methods of the present paper can only prove this result in the case when $S$ has at most one or infinitely many non-planar ends; see Remark \ref{rem:hmg-PMap-uncountable} for more information. When $S$ has $n$ non-planar ends for $1 < n < \infty$, one can in fact prove that the (uncountably many) elements constructed in Domat's paper \cite[Theorem 6.1]{Domat2022} all vanish in $H_1(\PMap(S))$; see Remark \ref{rem:gap-pmap-hmg} for more information.
\end{rem}

In order to state our result for the full mapping class groups $\Map(S)$, we first recall some background about ends of surfaces; more details are given in \S\ref{s:surfaces} and \S\ref{s:top-distinguished}. Every surface $S$ has a space of ends $E$, which is a compact, separable, totally disconnected topological space. The key hypothesis in our main theorem is a condition on the structure of the space $E$.

\begin{defn}
\label{defn:topological-type-partition}
For points $x,y \in E$, we write $x \sim y$ and say that $x$ is \emph{similar} to $y$ if and only if there are open neighbourhoods $U,V$ of $x,y$ respectively such that $(U,x)$ and $(V,y)$ are homeomorphic as based spaces. A point $x \in E$ is \emph{topologically distinguished} if it is not equivalent to any other point of $E$ under this equivalence relation.
\end{defn}

\begin{defn}
\label{defn:Upsilon}
For a topological space $E$, write $\Upsilon^+(E) = E\omega + 1$, where $E\omega$ means a countably infinite disjoint union of copies of $E$ and $X+1$ means the one-point compactification of $X$.
\end{defn}

\begin{athm}
\label{athm:uncountable}
Let $S$ be a connected, finite-genus surface with finitely many boundary components, whose space of ends $E$ is of the form $E = E_1 \sqcup \Upsilon^+(E_2)$, where $E_2$ has a topologically distinguished point $x$ and no point of $E_1$ is similar to $x$.  Then the integral homology group
\[
H_i(\Map(S))
\]
is uncountable for every $i\geq 1$. In fact, there is an injective homomorphism of graded abelian groups
\[
\Lambda^*\Bigl(\bigoplus_{\fc} \bZ\Bigr) \longrightarrow H_*(\Map(S)),
\]
where $\Lambda^*$ denotes the exterior algebra on an abelian group.
\end{athm}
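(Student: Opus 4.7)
The plan is to use the hypothesis on $E$ to pin down an invariant countable family of pairwise-disjoint topologically-distinguished subsurfaces, apply Theorem~A inside each to produce a commuting family of classes, and detect these classes in $H_*(\Map(S))$ via suitably symmetrised characters.

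Unpacking the hypothesis, write $E = E_1 \sqcup \bigl(\bigsqcup_{n \in \BN} E_2^{(n)}\bigr)^{+}$ and let $x_n \in E_2^{(n)}$ be the copy of the topologically distinguished point $x \in E_2$. Since any homeomorphism of $E$ preserves similarity classes, since $x_n$ is the unique point of its class inside $E_2^{(n)}$, and since no point of $E_1$ is similar to $x$, the countable set $X = \{x_n\}_{n \in \BN}$ is $\Map(S)$-invariant. Choose pairwise-disjoint clopen subsurfaces $\Sigma_n \subset S$, each with a single boundary circle and end space a sufficiently small neighbourhood of $x_n$ in $E_2^{(n)}$; the finite-genus hypothesis permits these to be planar for all large $n$, and in any case each $\Sigma_n$ is itself an infinite-type surface. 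Extension by the identity then produces a commuting product embedding $\prod_{n \in \BN} \PMapcbar{\Sigma_n} \hookrightarrow \PMapcbar{S} \subseteq \Map(S)$, and Theorem~A applied to each $\Sigma_n$ combined with a Künneth cross-product argument (of the same kind used for Theorem~A itself) gives an injection of graded groups $\Lambda^*\bigl(\bigoplus_{\fc} \BZ\bigr) \hookrightarrow H_*\bigl(\prod_n \PMapcbar{\Sigma_n}\bigr)$.

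It remains to carry this injectivity across to $H_*(\Map(S))$, and this is the main obstacle. The quotient $\Map(S)/\PMapcbar{S}$ can permute the $\Sigma_n$'s among themselves, so the local characters on the factors $\PMapcbar{\Sigma_n}$ constructed in Theorem~A do not automatically assemble into characters of $\Map(S)$. The plan, in the spirit of \cite{MalesteinTao21}, is to assemble the local Theorem~A characters into a map from $\Map(S)$ into a Baer--Specker-type target $\bigl(\prod_n \BZ\bigr)/\bigl(\bigoplus_n \BZ\bigr)$ on which the induced permutation action of $\Map(S)/\PMapcbar{S}$ is controllable, and to verify that $\fc$-many independent choices of local character produce $\fc$-many linearly independent global ones. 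The hardest technical step I expect is to check that this symmetrisation/quotienting procedure does not introduce cancellations or relations that reduce the rank of the image below $\fc$; once the rank-$\fc$ injection on $H_1(\Map(S))$ is established, the full exterior-algebra statement is automatic by applying the commuting-subsurface Künneth argument from Theorem~A once more.
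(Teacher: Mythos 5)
Your proposal has two genuine problems, the first of which is fatal to the chosen route. The local subsurfaces $\Sigma_n$ you cut out around the distinguished points $x_n$ need not be of infinite type: the hypothesis only requires $E_2$ to have a topologically distinguished point, so $E_2$ may be a single point, in which case $S$ is (for example) $\bR^2 \smallsetminus \bN$ and each $\Sigma_n$ is a once-punctured disc with trivial mapping class group. Theorem \ref{athm:PMap} therefore cannot be applied to the $\Sigma_n$, and the product embedding $\prod_n \PMapcbar{\Sigma_n} \hookrightarrow \Map(S)$ carries no homology at all in the very case that motivates the theorem. The classes must instead be built from an \emph{escaping} sequence of curves $\alpha_i$, each separating the first $i$ distinguished ends from the rest; such curves are not supported in any one $\Sigma_n$. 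Second, even where the local groups are nontrivial, the step you defer --- showing that classes visible in $H_*(\PMapcbar{S})$ survive in $H_*(\Map(S))$ --- is the entire content of the theorem, and the proposed ``symmetrised Baer--Specker character'' is not constructed. This is not a routine verification: Remark \ref{rem:gap-pmap-hmg} shows that Domat's infinite-product classes genuinely die in the coinvariants under handle shifts, i.e.\ precisely this kind of telescoping cancellation does occur when one enlarges the group, so injectivity must be argued, not expected.

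The paper's actual mechanism is quite different and worth internalising. One realises $S$ (up to the harmless decorations $E_1$, genus and boundary) as the base $X$ of a double covering whose total space, after filling in planar ends, is the Loch Ness monster $L$, arranged so that the branch points of the associated branched covering are exactly the distinguished ends $x_n$. Lemma \ref{lem:preserving-subgroup} uses the topologically-distinguished hypothesis to show every based homeomorphism of $X$ permutes the branch-point ends and hence preserves the mod-$2$ sum of winding numbers, i.e.\ preserves the index-$2$ subgroup $H \le \pi_1(X)$; consequently mapping classes lift, giving $\Map_*(X) \to \Map(L)$ with $(T_{\alpha_i})^2 \mapsto T_{\gamma_i}$. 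Detection then happens \emph{upstairs}, where Proposition \ref{prop:uncountability-Loch-Ness} already provides a factorisation $\bigoplus_{\fc}\bZ \to \Map(L) \to \bigoplus_{\fc}\bQ$ of the standard inclusion; one pulls the $\bZ$-side back to $\Map_*(X)$ via the squared twists and finally descends to $\Map(X)$ by quotienting away the countably many $\bQ$-summands hit by $\pi_1(X)$ in the Birman exact sequence. If you want to salvage your approach, the essential missing ingredient is exactly such a homomorphism out of all of $\Map(S)$ (not just $\PMapcbar{S}$) detecting the infinite products, and the winding-number-mod-$2$ count over the invariant set $\{x_n\}$ is the paper's way of manufacturing one.
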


\begin{rem}
In the course of the proof of Theorem \ref{athm:uncountable}, we also prove the same statement with $S$ replaced by the Loch Ness monster surface $L$, see Proposition \ref{prop:uncountability-Loch-Ness}.
\end{rem}

\begin{rem}
All \emph{countable} end spaces of surfaces (equivalently: countable compact Hausdorff spaces) are of the form $E = \omega^\alpha.n+1$ for a countable ordinal $\alpha$ and a positive integer $n$ \cite{MazurkiewiczSierpinski1920}. Hence a surface $S$ of finite genus with this end space satisfies the assumption of Theorem \ref{athm:uncountable} whenever $n=1$ and $\alpha$ is a successor ordinal.
\end{rem}

Thus for a large class of infinite-type surfaces $S$ with countably many ends we know that $\Map(S)$ has uncountable integral homology in all positive degrees. This suggests the following question.

\begin{ques}
\label{ques-countable}
Let $S$ be an infinite-type surface with countably many ends. Is the homology of $\Map(S)$  uncountable in all positive degrees?
\end{ques}

\begin{rem}
Without the hypothesis on the structure of the space of ends $E$ of $S$, the conclusion of Theorem \ref{athm:uncountable} is false. For example, as mentioned above, we prove in \cite{PalmerWu1} that
\[
H_i(\Map(\bR^2 \smallsetminus \cC)) \cong \begin{cases}
\bZ & i \text{ even} \\
0 & i \text{ odd}.
\end{cases}
\]
\end{rem}

\begin{rem}
The hypotheses of this paper and the hypotheses of \cite{PalmerWu1} are in some sense opposite, with opposite conclusions. In \cite{PalmerWu1} we consider $1$-holed binary tree surfaces, whose end spaces are \emph{Cantor compactifications} $(E\omega)^\cC$ (see \cite[\S 1.2]{PalmerWu1} for the definition), which are highly self-similar (in particular $(E\omega)^\cC \cong \cC$ if $E = \varnothing$ or $E=\cC$, which is homogeneous), and we prove that $H_i(\Map(S)) = 0$ for all $i>0$. On the other hand, in this paper we consider surfaces $S$ whose end spaces $E$ satisfy the ``homogeneity breaking'' hypothesis of Theorem \ref{athm:uncountable} (roughly: $E$ has a limit point of topologically distinguished points), and conclude that $H_i(\Map(S))$ is uncountable for all $i>0$.
\end{rem}

\subsection*{Non-trivial torsion.}

So far, the elements that we have constructed in the homology of big mapping class groups all have infinite order. It would be interesting also to find some torsion elements. In fact, the following question was asked by Domat in \cite[Question 11.3]{Domat2022}.

\begin{ques}
\label{ques-tor-hml}
Let $S$ be an infinite-type surface. Are there torsion elements in $H_1(\PMapcbar{S})$?
\end{ques}

Recall that $\PMap_c(S)$ denotes the subgroup of $\Map(S)$ of mapping classes that admit representative homeomorphisms with compact support, and $\PMapcbar{S}$ denotes its closure in $\Map(S)$ in the quotient topology induced by the compact-open topology on $\Homeo(S)$. Also recall that $\PMapcbar{S} \subseteq \PMap(S)$ coincides with $\PMap(S)$ if and only if $S$ has at most one non-planar end \cite[Theorem 4]{PriyamVlamis18}. Our third result answers Domat's question in the positive.

\begin{athm}
\label{thm-tor10-hmlg}
Let $S$ be an infinite-type surface of genus $2$ and with finitely many (possibly zero) boundary components. Then the homology groups $H_1(\PMap(S)) = H_1(\PMapcbar{S})$ and $H_1(\Map(S))$ both contain an order-$10$ element. Moreover, the cyclic group generated by this element is a direct summand.
\end{athm}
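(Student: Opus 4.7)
The strategy is to construct a surjective homomorphism $\phi\colon\Map(S)\to\BZ/10$ together with a non-separating Dehn twist $T_c\in\Map(S)$ satisfying $\phi(T_c)=1$, and to show that $[T_c]\in H_1(\Map(S))$ has order exactly $10$; the induced map then provides a retraction of $H_1(\Map(S))$ onto $\langle[T_c]\rangle\cong\BZ/10$, showing the latter is a direct summand. Since $S$ has finite genus $2$ and finitely many boundary components, every end of $S$ is planar; in particular $S$ has no non-planar end, so $\PMap(S)=\PMapcbar{S}$ by \cite[Theorem~4]{PriyamVlamis18}, and it suffices to treat $\Map(S)$ and $\PMap(S)$.

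To define $\phi$, realise $S$ as $\Sigma\smallsetminus E$, where $\Sigma\cong\Sigma_{2,b}$ is a compact genus-$2$ surface with $b$ boundary components matching those of $S$, and $E\subset\mathrm{int}(\Sigma)$ is a closed subset corresponding to the end space of $S$. This yields a canonical isomorphism $\Map(S)\cong\Map(\Sigma,E)$, and the forgetful homomorphism $\Map(\Sigma,E)\to\Map(\Sigma)$ (not requiring preservation of $E$) is well-defined. Capping off each boundary component of $\Sigma$ by a disc gives a further homomorphism $\Map(\Sigma)\to\Map(\Sigma_2)$, and post-composing with Mumford's abelianisation $\Map(\Sigma_2)\twoheadrightarrow H_1(\Map(\Sigma_2))\cong\BZ/10$ yields $\phi\colon\Map(S)\to\BZ/10$. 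For any non-separating simple closed curve $c\subset S$, its image in $\Sigma_2$ is still non-separating, so $\phi(T_c)=1$, and hence $[T_c]$ has order divisible by $10$ in $H_1(\Map(S))$.

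To show $[T_c]$ has order exactly $10$, I would use two instances of the chain relation, realised inside compact genus-$2$ subsurfaces of $S$. First, choose $\Sigma_{2,1}'\subset S$ containing a length-$4$ chain $c_1,\ldots,c_4$ of non-separating curves; the chain relation gives $(T_{c_1}\cdots T_{c_4})^{10}=T_\delta$ in $\Map(S)$, which abelianises to $40[T_c]=[T_\delta]$ (using that all non-separating Dehn twists have the same class in $H_1$). Second, choose $\Sigma_{2,2}'\subset S$ containing a length-$5$ chain $c_1',\ldots,c_5'$; the chain relation gives $(T_{c_1'}\cdots T_{c_5'})^{6}=T_{\delta_1}T_{\delta_2}$, abelianising to $30[T_c]=[T_{\delta_1}]+[T_{\delta_2}]$. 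Each of $\delta,\delta_1,\delta_2$ can be arranged to bound a planar subsurface of $S$ with infinitely many ends on one side; for such a curve $d$, a standard ``commutator-shift'' argument writes $T_d=[P,\sigma]$, where $P=\prod_{i\geq 0}T_{d_i}$ is an infinite product of Dehn twists along a sequence of disjoint parallel copies of $d$ escaping to the ends and $\sigma$ is a shift homeomorphism of the planar side with $\sigma(d_i)=d_{i+1}$. Thus $[T_\delta]=[T_{\delta_1}]=[T_{\delta_2}]=0$ in $H_1(\Map(S))$, giving $40[T_c]=0$ and $30[T_c]=0$, whence $10[T_c]=\gcd(40,30)\cdot[T_c]=0$, so $[T_c]$ has order exactly $10$.

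Since $\phi([T_c])$ generates $\BZ/10$, the induced map $H_1(\Map(S))\to\BZ/10$ restricts to an isomorphism $\langle[T_c]\rangle\to\BZ/10$, giving the direct-summand splitting. The same argument applies verbatim to $\PMap(S)$, since $T_c\in\PMap(S)$ and $\phi$ restricts to a surjection $\PMap(S)\twoheadrightarrow\BZ/10$. The main obstacle is the commutator-shift step, where the infinite-type structure of $S$ is essentially used: one must choose the chain subsurfaces $\Sigma_{2,1}'$ and $\Sigma_{2,2}'$ so that all three boundary curves $\delta,\delta_1,\delta_2$ bound planar regions of $S$ with infinitely many ends, and assemble the infinite product $P$ and shift $\sigma$ inside those regions so that $T_d=[P,\sigma]$ holds in $\Map(S)$.
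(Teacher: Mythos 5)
Your first half is sound and is essentially the paper's argument for one of the two divisibilities: filling in the (all planar) ends gives a homomorphism $\Map(S)\to\Map(S_{2,b})$, and since $H_1(\Map(S_{2,b}))\cong\bZ/10$ is generated by $[T_c]$ for $c$ non-separating (Theorem \ref{thm-fin-torhmlg}), the class $[T_c]\in H_1(\Map(S))$ has order divisible by $10$; once the order is known to be exactly $10$, this map retracts $H_1(\Map(S))$ onto $\langle[T_c]\rangle$. The gap is in your proof that $10[T_c]=0$. First, the claim that $\delta_1,\delta_2$ can both be arranged to bound planar pieces with infinitely many ends already fails when the end space is $\omega+1$: in any clopen partition of $\omega+1$ into two pieces, one piece is finite. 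More decisively, the commutator-shift identity $T_\delta=[P,\sigma]$ cannot hold for the boundary $\delta$ of the $4$-chain neighbourhood. Any $\sigma$ with $\sigma(d_i)=d_{i+1}$ must carry the non-planar side of $d_i$ onto the non-planar side of $d_{i+1}$. The non-planar side of $d_0=\delta$ is the compact chain neighbourhood $N\cong S_{2,1}$ (it contains all the genus of $S$ and none of its ends), so the non-planar side of each $d_{i+1}$ --- namely $N$ together with the regions between $d_0$ and $d_{i+1}$ --- would also have to be homeomorphic to $S_{2,1}$, forcing each intermediate region to be an annulus; but then the planar side of $\delta$ is an annular neighbourhood of a single end, contradicting the fact that it contains all (infinitely many) ends of $S$. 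Since $40[T_c]=[T_\delta]$ is one of the two relations you need, the derivation of $10[T_c]=0$ breaks down.

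Fortunately the entire second half is unnecessary. The twist $T_c$ is supported on a compact subsurface $S_{2,1}\subset S$, and $H_1(\Map(S_{2,1}))\cong\bZ/10$ by Theorem \ref{thm-fin-torhmlg} (which holds for all $b\geq 0$, so you also need not cap off boundary components); hence $10[T_c]=0$ holds already in $H_1(\Map(S_{2,1}))$ and is pushed forward by the ``extend by the identity'' homomorphism. This is exactly the paper's proof: the composite $\Map(S_{2,1})\to\PMap(S)\subseteq\Map(S)\to\Map(S_{2,b})$ induces an isomorphism $\bZ/10\to\bZ/10$ on $H_1$, which factors the identity of $\bZ/10$ through $H_1(\PMap(S))$ and $H_1(\Map(S))$ and exhibits the direct summand. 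A further small point: your step ``all non-separating Dehn twists have the same class in $H_1$'' uses conjugacy by a change-of-coordinates homeomorphism, which need not lie in $\PMap(S)$, so the verbatim transfer to $\PMap(S)$ also needs care; the compact-subsurface argument avoids this issue as well.
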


\begin{rem}
By comparing the stable homology of (orientable, finite-type) mapping class groups with rational coefficients \cite{MW07} and with mod-$p$ coefficients \cite{Galatius04}, one sees that there are also many torsion elements in the integral homology of mapping class groups in the stable range. Using this and Lemma \ref{lem:embedding}, one may find many higher-degree torsion elements in the homology of mapping class groups of infinite-type surfaces of finite genus.
\end{rem}

In a sense, our answer to Domat's question is ``cheating'', since we simply show that a certain torsion element in the homology of the mapping class group of a finite-type subsurface of $S$ injects into the homology of the mapping class group of $S$. Together with our uncountability results above (Theorems \ref{athm:PMap} and \ref{athm:uncountable}), this suggests two refinements of Domat's question:

\begin{ques}
\label{ques-torsion-infinite-support}
Let $S$ be an infinite-type surface. Do the homology groups $H_1(\PMapcbar{S})$ or $H_1(\PMap(S))$ contain torsion elements that are not supported on any finite-type subsurface of $S$?
\end{ques}

\begin{ques}
\label{ques-torsion-uncountable}
Let $S$ be an infinite-type surface. Do the homology groups $H_1(\PMapcbar{S})$ or $H_1(\PMap(S))$ contain an uncountable torsion subgroup?
\end{ques}

We note that a positive answer to Question \ref{ques-torsion-uncountable} would imply a positive answer to Question \ref{ques-torsion-infinite-support}, since torsion admitting finite-type support can only account for countably many torsion elements.

\subsection*{Outline.} 

We begin with two sections of background: \S\ref{s:surfaces} on infinite-type surfaces and big mapping class groups and \S\ref{s:top-distinguished} on notions of \emph{topologically distinguished points}. In \S\ref{s:lifting-uncountability} we prove a basic lemma that gives a sufficient criterion for the homology of a group to contain an embedded copy of the exterior algebra on a direct sum of continuum many copies of $\bZ$. We also discuss techniques of \cite{Domat2022} that may be used to construct the inputs for this lemma.

Theorems \ref{athm:PMap} and \ref{athm:uncountable} are then proven in \S\ref{s:Loch-Ness}--\S\ref{s:double-branched-covers}. In \S\ref{s:Loch-Ness} we prove uncountability of the homology of the mapping class group of the Loch Ness monster surface, which is the first step in the proof of Theorem \ref{athm:uncountable}. We then adapt these techniques to prove Theorem~\ref{athm:PMap} on the homology of the closure of the compactly-supported mapping class group and the Torelli group of an arbitrary infinite-type surface $S$. In \S\ref{s:double-branched-covers} we apply the results of \S\ref{s:Loch-Ness}, together with a covering space argument inspired by a technique of Malestein and Tao~\cite{MalesteinTao21}, to complete the proof of Theorem \ref{athm:uncountable}. The covering space argument in this section is the step in which we use in an essential way the hypothesis on the structure of the end space of the surface.

We prove Theorem \ref{thm-tor10-hmlg} on torsion elements in \S\ref{s:torsion}. Finally, in \S\ref{s:open-problems}, we record some related open questions, in particular discussing the \emph{co}homology of mapping class groups in \S\ref{ss:open-cohomology}. Appendix \ref{appendix:abelian-groups} gathers some basic facts about abelian groups that are needed in several of our proofs.

\subsection*{Acknowledgements.}

MP was partially supported by a grant of the Romanian Ministry of Education and Research, CNCS - UEFISCDI, project number PN-III-P4-ID-PCE-2020-2798, within PNCDI III. XW is currently a member of LMNS and supported by a starter grant at Fudan University. He  thanks Guozhen Wang for discussions related to the mod-$p$ homology of the stable mapping class group.

\section{Surfaces, ends and mapping class groups}
\label{s:surfaces}

\subsection{Infinite-type surfaces}\label{s:inf-type}

All surfaces will be assumed to be second countable, connected, orientable and to have compact boundary. If the fundamental group of $S$ is finitely generated, we say that $S$ has \emph{finite type}, otherwise it has \emph{infinite type}. The classification of surfaces of possibly infinite type was proven by von Ker\'ekj\'art\'o \cite{vKer23} and Richards \cite{Ri63}. Recall that an \emph{end} of a surface $S$ is an element of the set
\begin{equation}
\label{eq:inverse-limit}
\Ends(S) = \varprojlim \pi_0( S\setminus K),
\end{equation}
where the inverse limit is taken over all compact subsets $K \subset S$. The \emph{Freudenthal compactification} of $S$ is the union
\[
\overline{S} = S \sqcup \Ends(S)
\]
equipped with the topology generated by $U \sqcup \{ e \in \Ends(S) \mid e<U \}$ for all open subsets $U \subseteq S$. Here $e<U$ means that there is a compact subset $K \subset S$ such that $U$ contains the component of $S \setminus K$ hit by $e$ under the canonical map $\Ends(S) \to \pi_0(S \setminus K)$. The induced subspace topology on $\Ends(S)$ coincides with the limit topology induced from the discrete topology on each term in the inverse system. With this topology, $\Ends(S)$ is homeomorphic to a closed subset of the Cantor set. We call an end $e \in \Ends(S)$ \emph{planar} if it has a neighbourhood (in the topology of $\overline{S}$) that embeds into the plane, otherwise we call it \emph{non-planar}. The (closed) subspace of non-planar ends is denoted by $\Ends_{np}(S) \subseteq \Ends(S)$.

\begin{thm}[{\cite[Theorems 1 and 2]{Ri63}}]\label{thm:clas-inf-sur}
Let $S_1,S_2$ be two surfaces of genus $g_1,g_2 \in \bN \cup \{\infty\}$ and with $b_1,b_2 \in \bN$ boundary components. Then $S_1 \cong S_2$ if and only if $g_1 = g_2$, $b_1 = b_2$ and there is a homeomorphism of pairs of spaces
\[
(\Ends(S_1), \Ends_{np}(S_1)) \;\cong\; (\Ends(S_2), \Ends_{np}(S_2)).
\]
Conversely, given $g \in \bN \cup \{\infty\}$, $b \in \bN$ and a pair $X\subseteq Y$ of closed subsets of the Cantor set, where we require that $g = \infty$ if and only if $X \neq \varnothing$, there exists a surface $S$ of genus $g$ with $b$ boundary components such that $(\Ends(S),\Ends_{np}(S)) \cong (Y,X)$.
\end{thm}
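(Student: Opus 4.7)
The plan is to prove the two halves separately: the classification (matching invariants $\Rightarrow$ homeomorphism) and the realization (every admissible triple $(g,b,(Y,X))$ is achieved). Throughout, I would treat the case $b=0$ first and glue in discs at the end to handle $b>0$.

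For the classification half, I would use a Ker\'ekj\'art\'o--Richards exhaustion and a back-and-forth argument. First, choose exhaustions of $S_1$ and $S_2$ by compact subsurfaces
\[
K_1 \subset K_2 \subset \cdots \subset S_1, \qquad L_1 \subset L_2 \subset \cdots \subset S_2,
\]
where each $K_i, L_j$ is a finite-type subsurface, no complementary component is a disc, and the partitions of $\Ends(S_1), \Ends(S_2)$ induced by $\pi_0(S_1 \setminus K_i)$ and $\pi_0(S_2 \setminus L_j)$ refine every open cover in the limit. Given a homeomorphism of pairs $\varphi\colon (\Ends(S_1), \Ends_{np}(S_1)) \to (\Ends(S_2), \Ends_{np}(S_2))$, the plan is to interleave the two exhaustions and inductively construct homeomorphisms $h_i\colon K_i' \to L_i'$, extending the previous one, between enlarged subsurfaces $K_i'\supseteq K_i$ and $L_i' \supseteq L_i$ whose complementary-component structures correspond under $\varphi$. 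The key observation at each step is that one can enlarge $K_i'$ and $L_i'$ so that the two finite-type subsurfaces have the same genus, boundary count, and partition of complementary ends; the classification of finite-type surfaces then provides the homeomorphism between them (rel.\ the already-matched boundary from step $i{-}1$). The direct limit of the $h_i$ extends continuously to the desired homeomorphism $S_1 \to S_2$.

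For the realization half, I would construct a model surface for each admissible triple $(g, b, (Y,X))$. Embed $Y$ as a closed subset of the standard Cantor set $\mathcal{C}\subset S^2$, and start from $S^2 \setminus Y$; this yields a planar surface with end space $Y$. If $g < \infty$, attach $g$ handles inside a compact region disjoint from a neighbourhood of $Y$. If $g = \infty$, then by hypothesis $X \neq \varnothing$, and one must attach countably infinitely many handles whose accumulation set in the Freudenthal compactification is precisely $X$: choose a nested sequence of finite open covers $\mathcal{U}_n$ of $X$ whose mesh tends to zero, and attach one handle inside each member of each $\mathcal{U}_n$, well away from $Y \setminus X$. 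A standard verification using the description of the Freudenthal topology shows that the resulting surface $S$ has $\Ends(S) = Y$ and $\Ends_{np}(S) = X$. Finally, remove $b$ open discs from a compact region to introduce the boundary components.

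The main obstacle is the inductive matching step in the back-and-forth argument: choosing the enlargements $K_i', L_i'$ so that their finite-type invariants and induced end-partitions agree. To organise this, I would work with a distinguished basis for the topology on $\Ends(S_j)$ coming from clopen partitions, and translate $\varphi$ into a combinatorial bijection between the two directed systems of clopen partitions (compatibly with the $\Ends_{np}$ subsets and with genus/end data assigned to each part). The technical heart is then bookkeeping: ensuring that when one enlarges $K_i$ to capture more of the genus or to refine the end partition on one side, one can simultaneously enlarge $L_i$ on the other side to match, without ever disturbing the already-chosen homeomorphism on $K_{i-1}'$. This is exactly the delicate part of Richards' original argument, and I do not see any shortcut that avoids the careful simultaneous refinement.
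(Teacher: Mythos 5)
This statement is quoted background: the paper gives no proof of it, citing Richards \cite{Ri63} directly, so there is no in-paper argument to compare against. Your sketch is a faithful outline of Richards' original proof --- the back-and-forth matching of compact exhaustions using the finite-type classification for the uniqueness half, and the ``sphere minus $Y$ with handles accumulating exactly on $X$'' model for the realization half --- and I see no gap in it at the level of detail given; you correctly identify the simultaneous-refinement bookkeeping as the technical heart.
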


\subsection{Mapping class groups}

For a surface $S$, the \emph{mapping class group} of $S$ is the group of isotopy classes of orientation-preserving diffeomorphisms of $S$ fixing the boundary of $S$ pointwise, i.e.
\[
\Map(S) := \pi_0(\mathrm{Diff}^+(S,\partial S)).
\]

The \emph{pure mapping class group} $\PMap(S)$ of $S$ is the subgroup of $\Map(S)$ consisting of all elements whose induced action on $\Ends(S)$ is the identity. It follows from the construction of \cite[Theorem~2]{Ri63} (or, more precisely, from the \emph{naturality} of this construction) that every homeomorphism of $\Ends(S)$ sending the subspace $\Ends_{np}(S)$ onto itself is induced by some homeomorphism of $S$. This implies that we have the following short exact sequence.

\begin{prop}\label{thm:ses-bmcg}
Let $S$ be any surface. Then there is a short exact sequence of groups
\[
1 \to \PMap(S) \longrightarrow \Map(S) \longrightarrow \mathrm{Homeo}(\Ends(S),\Ends_{np}(S)) \to 1,
\]
where $\mathrm{Homeo}(\Ends(S),\Ends_{np}(S))$ is the group of homeomorphisms of the pair $(\Ends(S),\Ends_{np}(S))$.
\end{prop}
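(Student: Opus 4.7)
The plan is to build the middle homomorphism $\rho : \Map(S) \to \mathrm{Homeo}(\Ends(S), \Ends_{np}(S))$, observe that its kernel is $\PMap(S)$ essentially by definition, and then prove the harder surjectivity statement.

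For the construction of $\rho$: any orientation-preserving homeomorphism $f$ of $S$ carries compact sets to compact sets and induces compatible self-bijections of each $\pi_0(S \setminus K)$, yielding via the inverse system~\eqref{eq:inverse-limit} a self-homeomorphism $f_*$ of $\Ends(S)$; equivalently, $f$ extends uniquely to a self-homeomorphism of the Freudenthal compactification $\overline S$ whose restriction to ends is $f_*$. Since planarity of an end is witnessed by a planar neighbourhood in $\overline S$, the subspace $\Ends_{np}(S)$ is preserved by $f_*$. Isotopies leave each $\pi_0(S \setminus K)$ pointwise fixed, so $f \mapsto f_*$ descends to $\rho$ on $\Map(S)$, and by definition $\ker(\rho) = \PMap(S)$.

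The substantive step is surjectivity. Given $\phi \in \mathrm{Homeo}(\Ends(S), \Ends_{np}(S))$, I would realize it by a self-homeomorphism of $S$ via an exhaustion argument together with the classification Theorem~\ref{thm:clas-inf-sur}. Fix a nested exhaustion $K_1 \subset K_2 \subset \cdots$ by connected compact subsurfaces with $\bigcup_n K_n = S$ and each complementary component of each $K_n$ having connected boundary; write $\mathcal P_n$ for the finite clopen partition of $\Ends(S)$ cut out by $\pi_0(S \setminus K_n)$. Because $\Ends(S)$ is compact and totally disconnected and the $\mathcal P_n$ form a neighbourhood basis of every end, after enlarging the $K_n$ as necessary I may assume that $\phi$ permutes each $\mathcal P_n$ via a bijection $\phi_n$. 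For each $U \in \mathcal P_n$, the corresponding subsurface $V_U \subset S$ has one boundary circle and its end pair is canonically $(U, U \cap \Ends_{np}(S))$; since $\phi$ carries the data of $V_U$ to the data of $V_{\phi_n(U)}$ and the genera are forced by the total genus of $S$ together with non-planarity of ends, Theorem~\ref{thm:clas-inf-sur} supplies boundary-preserving homeomorphisms $V_U \cong V_{\phi_n(U)}$.

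The remaining task is to assemble these into a global homeomorphism of $S$ inducing $\phi$ on ends. I would proceed inductively: at stage $n$, the compact surface $\overline{K_n \setminus K_{n-1}}$ admits a self-homeomorphism realizing the prescribed permutation of its boundary circles (any permutation of boundary components of a connected compact surface is realizable by a self-homeomorphism), and this can be chosen to match the already-defined homeomorphism on $\partial K_{n-1}$. Taking the union of these homeomorphisms together with the pieces $V_U \cong V_{\phi_n(U)}$ yields a self-homeomorphism of $S$ that agrees with $\phi$ on every $\mathcal P_n$, hence on the inverse limit $\Ends(S)$. The main obstacle is the careful compatibility required for the inductive gluing — making each boundary matching, each genus assignment, and the refinement of the exhaustion all consistent with $\phi$. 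This is essentially the naturality of Richards' construction, and it is exactly where the hypothesis that $\phi$ preserves $\Ends_{np}(S)$ enters.
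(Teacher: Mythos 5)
Your proposal is correct and follows essentially the same route as the paper: the kernel is $\PMap(S)$ by definition, and surjectivity is exactly the naturality of Richards' classification construction, which the paper simply cites (\cite[Theorem~2]{Ri63}) where you sketch the exhaustion-and-gluing argument behind it. The details you flag as the ``main obstacle'' are precisely the content of Richards' proof, so nothing further is needed.
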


\section{Topologically distinguished points}
\label{s:top-distinguished}

We now recall from the introduction the notion of \emph{topologically distinguished points} (Definition \ref{defn:topological-type-partition}) and compare it to a weaker notion of \emph{globally topologically distinguished points}.

\begin{defn}
\label{defn:topologically-distinguished}
Let $E$ be a topological space. Two points $x,y \in E$ are called \emph{similar} if there are open neighbourhoods $U$ and $V$ of $x$ and $y$ respectively and a homeomorphism $U \cong V$ taking $x$ to $y$. This is an equivalence relation on $E$. A point $x \in E$ is called \emph{topologically distinguished} if its equivalence class under this relation is $\{x\}$, in other words it is similar only to itself.
\end{defn}

\begin{defn}
\label{defn:globally-topologically-distinguished}
Let $E$ be a topological space. Two points $x,y \in E$ are called \emph{globally similar} if there is a homeomorphism $\varphi \in \Homeo(E)$ with $\varphi(x)=y$. This is an equivalence relation on $E$. A point $x \in E$ is called \emph{globally topologically distinguished} if its equivalence class under this relation is $\{x\}$, in other words it is similar only to itself. Equivalently, $x \in E$ is globally topologically distinguished if it is a fixed point of the action of $\Homeo(E)$ on $E$.
\end{defn}

\begin{figure}[tb]
    \centering
    \includegraphics{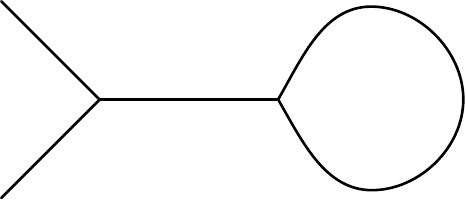}
    \caption{The $3$-valent vertices of this graph are globally topologically distinguished but not topologically distinguished, since they are similar (but not globally similar) to each other.}
    \label{fig:graph-counterexample}
\end{figure}

\begin{rem}
We record two immediate observations:
\begin{itemizeb}
\item If $x$ and $y$ are globally similar then they are similar.
\item If $x$ is topologically distinguished then it is globally topologically distinguished.
\end{itemizeb}
The converses of these two statements are false in general. For example, the two vertices of valence $3$ in the graph pictured in Figure \ref{fig:graph-counterexample} are similar but not globally similar; also, both of them are globally topologically distinguished but not topologically distinguished. However, for zero-dimensional (Hausdorff) spaces the converse does hold:
\end{rem}

\begin{lem}
\label{lem:global-to-local}
Suppose that $E$ is Hausdorff and zero-dimensional, i.e.~it has a basis for its topology consisting of clopen subsets. Then two points $x,y \in E$ are similar if and only if they are globally similar. Thus $x \in E$ is topologically distinguished if and only if it is globally topologically distinguished.
\end{lem}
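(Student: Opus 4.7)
The ``if'' direction is immediate: if $\varphi \in \Homeo(E)$ carries $x$ to $y$, then its restriction to any neighbourhood of $x$ witnesses local similarity. For the converse, suppose $\varphi : U \to V$ is a homeomorphism of open neighbourhoods with $\varphi(x) = y$. The case $x = y$ is trivial (take $\mathrm{id}_E$), so assume $x \neq y$. The plan is to produce disjoint clopen-in-$E$ neighbourhoods $U_0 \ni x$ and $V_0 \ni y$ such that $\varphi$ restricts to a homeomorphism $\varphi_0 : U_0 \to V_0$. Given this, the involution $\Phi$ equal to $\varphi_0$ on $U_0$, to $\varphi_0^{-1}$ on $V_0$, and to the identity on $E \setminus (U_0 \cup V_0)$ is continuous on each of the three clopen pieces, hence a self-homeomorphism of $E$ sending $x$ to $y$.

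To make $U_0$ and $V_0$ disjoint, I would first use the Hausdorff and zero-dimensional hypotheses to pick disjoint clopen sets $A \ni x$ and $B \ni y$, and then shrink $(U,V)$ to $(U \cap A \cap \varphi^{-1}(B), \, \varphi(U \cap A \cap \varphi^{-1}(B)))$, so that $U \subseteq A$ and $V \subseteq B$. The main step is then a short clopen zig-zag that forces both $U_0$ and $V_0 = \varphi(U_0)$ to be clopen in $E$, not merely clopen in $U$ and $V$: choose a clopen-in-$E$ neighbourhood $U_1 \subseteq U$ of $x$; then $\varphi(U_1)$ is open in $E$, so choose a clopen-in-$E$ neighbourhood $V_2 \subseteq \varphi(U_1)$ of $y$; then $\varphi^{-1}(V_2)$ is open in $E$, so choose a clopen-in-$E$ neighbourhood $U_0 \subseteq \varphi^{-1}(V_2)$ of $x$, and set $V_0 := \varphi(U_0)$. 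Each refinement exists because $E$ is Hausdorff and zero-dimensional.

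The point that requires care is verifying that $V_0$ is clopen in $E$. It is open in $E$, since $\varphi$ is a homeomorphism onto the open set $V$. For closedness, $U_0$ is closed in $E$, hence closed in the open set $\varphi^{-1}(V_2)$; transporting along the homeomorphism $\varphi^{-1}(V_2) \xrightarrow{\varphi} V_2$ shows that $V_0$ is closed in $V_2$; and $V_2$ is itself closed in $E$, whence $V_0$ is closed in $E$. The very purpose of threading $V_0$ through the pre-selected clopen-in-$E$ set $V_2$ is to enable this last step; this is the only technically delicate part of the argument. The second (``topologically distinguished'') statement of the lemma then follows formally from the first, applied to each $y \in E \setminus \{x\}$.
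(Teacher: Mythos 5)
Your proof is correct and follows essentially the same route as the paper's: shrink to disjoint clopen neighbourhoods exchanged by $\varphi$, then extend by $\varphi$, $\varphi^{-1}$ and the identity on the three clopen pieces. Your zig-zag refinement is in fact slightly more careful than the paper's own wording, which simply says one may ``shrink $U$ and $V$ to be clopen'' without addressing the point you isolate, namely that the image of a clopen set under $\varphi$ is a priori only clopen in $V$ rather than in $E$; threading $V_0$ through the pre-chosen clopen set $V_2$ resolves this correctly.
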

\begin{proof}
The second statement follows from the first one, so we only have to prove the first statement, that $x,y \in E$ are similar if and only if they are globally similar. One implication is obvious; we will prove the opposite implication. So let us assume that $x,y \in E$ are similar and choose open neighbourhoods $U$ and $V$ of $x$ and $y$ respectively and a homeomorphism $\varphi \colon U \to V$ taking $x$ to $y$. Assume that $x\neq y$ (otherwise the result is obvious). Since $E$ is zero-dimensional, we may assume, by shrinking them if necessary, that $U$ and $V$ are clopen. Since $E$ is Hausdorff, we may assume, by shrinking them if necessary, that $U$ and $V$ are disjoint. We may therefore extend $\varphi$ to a homeomorphism $\bar{\varphi} \in \Homeo(E)$ by:
\begin{itemizeb}
\item $\bar{\varphi}(e) = \varphi(e)$ for $e \in U$;
\item $\bar{\varphi}(e) = \varphi^{-1}(e)$ for $e \in V$;
\item $\bar{\varphi}(e) = e$ for $e \in E \smallsetminus (U \sqcup V)$.
\end{itemizeb}
This bijection is continuous since $\{ U , V , E \smallsetminus (U \sqcup V) \}$ is an open cover of $E$ and $\bar{\varphi}$ is continuous when restricted to each of these subsets. Its inverse is continuous for the same reason, so it is a homeomorphism of $E$ taking $x$ to $y$. Thus $x$ and $y$ are globally similar.
\end{proof}

\begin{rem}
\label{rem:end-spaces-top-distinguished}
Ends spaces of surfaces are always Hausdorff and zero-dimensional, so Lemma \ref{lem:global-to-local} implies that \emph{topologically distinguished} and \emph{globally topologically distinguished} are the same for end spaces.
\end{rem}

\begin{lem}
\label{lem:top-distinguished-compactification}
If a space $E$ has a topologically distinguished point, then $E\omega+1$ has a globally topologically distinguished point. In fact, the point at infinity is globally topologically distinguished.
\end{lem}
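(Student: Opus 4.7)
The plan is to prove directly that any self-homeomorphism $\varphi$ of $E\omega+1$ must fix the point at infinity, by exploiting the assumed topologically distinguished point $x \in E$. Write $E\omega = \bigsqcup_{m \in \bN} E_m$ with each $E_m$ a copy of $E$, and let $x_m \in E_m$ correspond to $x$. Assuming for contradiction that $\varphi(\infty) = p \in E_n$ for some $n$, the strategy is to show that cofinitely many of the points $x_m$ are all sent by $\varphi$ to the single point $x_n$, violating injectivity.

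I would proceed in three steps. First, I would choose an open neighbourhood $V$ of $p$ with $V \subseteq E_n$, which is possible because $E_n$ is open in $E\omega+1$. Second, I would observe that $\varphi^{-1}(V)$ is an open neighbourhood of $\infty$, so its complement in $E\omega$ is compact and meets only finitely many of the summands $E_m$ (each $E_m$ being compact); this yields $M$ such that $\varphi^{-1}(V) \supseteq \{\infty\} \cup \bigsqcup_{m>M} E_m$, and in particular $\varphi(x_m) \in E_n$ for all $m > M$. Third, for each such $m$ I would shrink a neighbourhood $U$ of $x_m$ in $E_m$ so that $\varphi(U) \subseteq E_n$: then $\varphi|_U$ is a based homeomorphism $(U, x_m) \cong (\varphi(U), \varphi(x_m))$, exhibiting $\varphi(x_m)$ as similar to $x_m$ when both are viewed inside the single copy $E_n \cong E$. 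Since $x$ is topologically distinguished in $E$, this forces $\varphi(x_m) = x_n$ for every $m > M$, giving the desired contradiction.

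The main point requiring care will be the finiteness statement in the second step, namely that compact subsets of $E\omega$ meet only finitely many of the summands. This holds as soon as each $E_m$ is compact, since then $\{E_m\}_{m \in \bN}$ is an open cover of any compact $K \subseteq E\omega$ admitting a finite subcover. For every intended application of this lemma the space $E$ is a closed subset of the Cantor set and hence compact, so this causes no difficulty; in greater generality one would want to impose a compactness hypothesis on $E$ (which is in any case natural for the one-point compactification $E\omega+1$ to behave well).
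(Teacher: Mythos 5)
Your argument is correct and is essentially the paper's proof: both assume $\varphi(\infty)$ lands in some summand $E_n$, use the definition of the one-point compactification to see that cofinitely many summands $E_m$ (hence infinitely many copies $x_m$ of the distinguished point) are pushed into $E_n$, and then invoke topological distinguishedness of $x$ to reach a contradiction. The only cosmetic difference is that you conclude by violating injectivity (all those $x_m$ must map to $x_n$), whereas the paper phrases the same count as the homeomorphism-invariant property ``every neighbourhood contains infinitely many points similar to $x$,'' which holds at $\infty$ but fails at $\varphi(\infty)$; your added remark about compactness of $E$ is not actually needed, since a compact subset of $E\omega$ meets only finitely many summands simply because the summands form a disjoint open cover.
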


\begin{proof}
Let $\infty$ denote the point at infinity of the one-point compactification $E\omega + 1$ of $E\omega = \bigsqcup_\omega E$. Let $\varphi \in \Homeo(E\omega + 1)$. We just need to show that $\varphi(\infty) = \infty$, since it will then follow that $\infty$ is a globally topologically distinguished point of $E\omega + 1$. Suppose for a contradiction that $\varphi(\infty) \neq \infty$. Write $E_i = E$ for each $i \in \bN$, and identify $E\omega = \bigsqcup_{i \in \bN} E_i$. By assumption, $\varphi(\infty) \in E_j$ for some $j \in \bN$. Let $x \in E$ be a topologically distinguished point. Every open neighbourhood $U$ of $\infty \in E\omega + 1$ contains infinitely many points that are similar to $x$, since, by definition of the one-point compactification, $U$ must contain $E_i$ for infinitely many $i$. Since $\varphi$ is a homeomorphism, it must also be true that every open neighbourhood of $\varphi(\infty) \in E\omega + 1$ contains infinitely many points that are similar to $x$. But $E_j$ is an open neighbourhood of $\varphi(\infty) \in E\omega + 1$ and it contains only one point that is similar to $x$, a contradiction.
\end{proof}

\begin{cor}
\label{cor:top-distinguished-compactification}
Suppose that $E$ is Hausdorff and zero-dimensional. If $E$ has a topologically distinguished point, then the point at infinity of $E\omega+1$ is topologically distinguished.
\end{cor}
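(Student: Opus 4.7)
The plan is to combine the two lemmas immediately preceding the corollary. By Lemma \ref{lem:top-distinguished-compactification}, the assumption that $E$ has a topologically distinguished point already gives that the point at infinity $\infty \in E\omega+1$ is \emph{globally} topologically distinguished. Lemma \ref{lem:global-to-local} then promotes this to the conclusion we want, provided we can check that the ambient space $E\omega+1$ is itself Hausdorff and zero-dimensional.

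The main verification step, therefore, is to show that these two properties pass from $E$ to $E\omega+1$. For this I would argue as follows. Since $E$ is Hausdorff and zero-dimensional (and, in the setting of end spaces where this corollary is applied, compact), its countable topological disjoint union $E\omega = \bigsqcup_{i \in \bN} E_i$ is locally compact, Hausdorff, and zero-dimensional. Local compactness plus Hausdorffness of $E\omega$ ensures that the one-point compactification $E\omega + 1$ is again Hausdorff. For zero-dimensionality of $E\omega + 1$, points of $E\omega$ already have a clopen neighbourhood basis, inherited from the clopen basis of each $E_i$; at the point at infinity, a neighbourhood basis is given by the sets of the form $\{\infty\} \cup \bigl(E\omega \setminus (E_0 \sqcup \cdots \sqcup E_n)\bigr)$, and each such set is clopen in $E\omega + 1$ because the finite union $E_0 \sqcup \cdots \sqcup E_n$ is clopen and compact in $E\omega$.

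With $E\omega + 1$ confirmed to be Hausdorff and zero-dimensional, Lemma \ref{lem:global-to-local} applies: every globally topologically distinguished point of $E\omega + 1$ is topologically distinguished. Applied to the point $\infty$ identified by Lemma \ref{lem:top-distinguished-compactification}, this gives the desired conclusion.

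The only potential obstacle is the passage to the one-point compactification: if $E$ were assumed only Hausdorff and zero-dimensional but not locally compact, then $E\omega+1$ need not be Hausdorff. In the intended use, however, $E$ arises as a closed subset of an end space of a surface, hence is compact Hausdorff zero-dimensional, so local compactness is automatic and the argument above goes through without modification.
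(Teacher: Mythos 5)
Your proof is correct and is essentially identical to the paper's: both combine Lemma \ref{lem:top-distinguished-compactification} (global distinguishedness of $\infty$) with Lemma \ref{lem:global-to-local}, after noting that Hausdorffness and zero-dimensionality pass to $E\omega+1$. You additionally spell out this inheritance step (which the paper merely asserts) and correctly observe that it implicitly uses local compactness of $E$, automatic in the paper's applications since $E$ there is compact.
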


\begin{proof}
By Lemma \ref{lem:top-distinguished-compactification}, the point at infinity of $E\omega+1$ is globally topologically distinguished. Hausdorffness and zero-dimensionality of $E$ imply Hausdorffness and zero-dimensionality of $E\omega+1$, so Lemma \ref{lem:global-to-local} then implies that the point at infinity of $E\omega+1$ is topologically distinguished.
\end{proof}

\begin{rem}
There is another, a priori different, equivalence relation on topological spaces, defined by \cite{MaRa22}. They define, for points $x,y \in E$:
\[
x \leq y \;\Longleftrightarrow\; \forall\text{ open neighbourhoods } U \ni y, \; \exists z \in U : z \sim x,
\]
where $z \sim x$ means that $z$ and $x$ are similar in the sense of Definition \ref{defn:topologically-distinguished}. This is a pre-order on $E$, so it induces an equivalence relation
\[
x \approx y \;\Longleftrightarrow\; x \leq y \text{ and } y \leq x
\]
on $E$ and a poset structure on the quotient $E/{\approx}$. Clearly $x \sim y$ implies $x \approx y$. Also, if we now assume that $E$ is the end space of a surface $\Sigma$, it is not hard to see (using Lemma \ref{lem:global-to-local} and Proposition \ref{thm:ses-bmcg}) that $x \sim y$ if and only if there is a homeomorphism of $\Sigma$ taking $x$ to $y$. Theorem~1.2 of \cite{MaRa22} says that if $x \approx y$ then there is a homeomorphism of $\Sigma$ taking $x$ to $y$. It follows that $\sim$ and $\approx$ are the same equivalence relation on $E$ if it is the end space of a surface. In \cite{MannRafi}, the authors often consider the condition that ``$\Sigma$ has a unique maximal end'', i.e.~there is a unique maximal equivalence class $[x] \in E/{\approx}$ and the equivalence class $[x]$ has size $1$. The condition that we require in this paper is however much weaker, namely that ``$\Sigma$ has a topologically distinguished end'', i.e.~there is an equivalence class $[x] \in E/{\approx}$ of size $1$ (but it need not be maximal in the poset structure of $E/{\approx}$).
\end{rem}

\section{Tools for proving uncountability}
\label{s:lifting-uncountability}

We start with a key lemma, which we use several times to conclude uncountability of the homology of a given group $G$ in all positive degrees.

\begin{notation}
Let us fix some notation that will be used throughout the rest of the paper.
\begin{itemizeb}
\item For an abelian group $A$, denote by $\Lambda^*(A)$ the exterior algebra on $A$. 
\item We denote by $\fc$ the cardinality of the continuum.
\end{itemizeb}
\end{notation}

\begin{lem}
\label{lem:lift-uncount}
Let $G$ be a group, denote by $\alpha \colon G\twoheadrightarrow G^{ab} = H_1(G)$ the quotient onto its abelianisation and let $\iota \colon \bigoplus_{\fc} \bZ \to G$ be a homomorphism. Suppose that there is an embedding $f \colon \bigoplus_{\fc} \bQ \hookrightarrow H_1(G)$ such that the diagram
\begin{equation}
\label{eq:uncount-lemm}
\begin{tikzcd}
\displaystyle\bigoplus_{\fc} \bZ \ar[r,"\iota"] \ar[d,hook] & G \ar[d,two heads,"\alpha"] \\
\displaystyle\bigoplus_{\fc} \bQ \ar[r,"f",hook] & H_1(G),
\end{tikzcd}
\end{equation}
commutes, where $\bigoplus_{\fc} \bZ \hookrightarrow \bigoplus_{\fc} \bQ$ is the canonical inclusion. Then there is an injective homomorphism of graded abelian groups
\[
\Lambda^*\Bigl(\bigoplus_{\fc} \bZ\Bigr) \longhookrightarrow H_*(G).
\]
In particular, $H_i(G)$ is uncountable for all $i\geq 1$.
\end{lem}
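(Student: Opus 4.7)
The plan is to prove that the map on group homology induced by $\iota$,
\[
\iota_* \colon H_*\Bigl(\bigoplus_{\fc}\bZ\Bigr) \longrightarrow H_*(G),
\]
is injective in every degree. This suffices, because $\bigoplus_{\fc}\bZ$ is free abelian, hence a filtered colimit of finitely generated free abelian groups; combining the classical computation $H_*(\bZ^n;\bZ) \cong \Lambda^*(\bZ^n)$ with the fact that both $H_*$ and $\Lambda^*$ commute with filtered colimits of abelian groups then yields a natural identification $H_*(\bigoplus_{\fc}\bZ;\bZ) \cong \Lambda^*(\bigoplus_{\fc}\bZ)$. Moreover, uncountability of each $H_i(G)$ for $i \geq 1$ would follow automatically, since $\Lambda^i(\bigoplus_{\fc}\bZ)$ already contains a copy of $\bigoplus_{\fc}\bZ$ (via $x \mapsto x \wedge e_2 \wedge \cdots \wedge e_i$ for fixed independent basis elements $e_2,\ldots,e_i$ in degrees $i \geq 2$; trivially in degree $1$).

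To prove injectivity of $\iota_*$ I post-compose with the abelianisation map $\alpha_* \colon H_*(G) \to H_*(G^{ab})$; by functoriality of group homology and the commutativity of \eqref{eq:uncount-lemm},
\[
\alpha_* \circ \iota_* \;=\; (\alpha \circ \iota)_* \;=\; (f \circ i)_* \;=\; f_* \circ i_*,
\]
where $i \colon \bigoplus_{\fc}\bZ \hookrightarrow \bigoplus_{\fc}\bQ$ is the canonical inclusion. It is therefore enough to verify that both $i_*$ and $f_*$ are injective on homology. For $i_*$: since $\bigoplus_{\fc}\bQ$ is also torsion-free, the same colimit argument identifies $H_*(\bigoplus_{\fc}\bQ;\bZ)$ with $\Lambda^*(\bigoplus_{\fc}\bQ)$, and under this identification $\Lambda^n(i)$ sends the standard $\bZ$-basis of $\Lambda^n(\bigoplus_{\fc}\bZ)$ to a $\bQ$-linearly independent subset of $\Lambda^n(\bigoplus_{\fc}\bQ)$, hence is injective.

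The step I expect to be the main obstacle is the injectivity of $f_*$. The key observation is that the image $V := f(\bigoplus_{\fc}\bQ) \subseteq G^{ab}$ is a divisible abelian subgroup (being an isomorphic image of the divisible group $\bigoplus_{\fc}\bQ$), and every divisible subgroup of an abelian group splits off as a direct summand; this is a standard fact, presumably collected in Appendix \ref{appendix:abelian-groups}. Writing $G^{ab} = V \oplus W$ gives a retraction $\pi \colon G^{ab} \twoheadrightarrow V$ of the inclusion $V \hookrightarrow G^{ab}$, so on homology this inclusion is split injective. Since $f$ factors as $\bigoplus_{\fc}\bQ \xrightarrow{\;\cong\;} V \hookrightarrow G^{ab}$, it follows that $f_*$ is split injective as well. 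Combining this with injectivity of $i_*$ gives injectivity of $\alpha_* \circ \iota_* = f_* \circ i_*$, and \emph{a fortiori} of $\iota_*$, which completes the plan.
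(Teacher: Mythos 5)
Your proof is correct and uses essentially the same ingredients as the paper's: the splitting of a divisible subgroup (Lemma \ref{lem:divisible-injective}), the identification $H_*(A)\cong\Lambda^*(A)$ for torsion-free abelian $A$, and the injectivity of $\Lambda^*(\bigoplus_\fc\bZ)\to\Lambda^*(\bigoplus_\fc\bQ)$ (Lemma \ref{lem:exterior-Z-to-Q}). The only difference is cosmetic: the paper uses the retraction of $f$ to factor the canonical inclusion $\bigoplus_\fc\bZ\hookrightarrow\bigoplus_\fc\bQ$ through $G$ at the group level and then applies $H_*$ once, whereas you apply $H_*$ first and split off the divisible image inside $G^{ab}$ afterwards.
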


\begin{proof}
By Lemma \ref{lem:divisible-injective}, the embedding $f$ admits a retraction. Hence the canonical inclusion
\begin{equation}
\label{eq:inclusion-Z-to-Q}
\bigoplus_{\fc} \bZ \longhookrightarrow \bigoplus_{\fc} \bQ
\end{equation}
factors through $G$. It follows that the induced homomorphism of graded abelian groups
\begin{equation}
\label{eq:inclusion-Z-to-Q-on-homology}
H_*\Bigl(\bigoplus_{\fc} \bZ\Bigr) \longrightarrow H_*\Bigl(\bigoplus_{\fc} \bQ\Bigr)
\end{equation}
factors through $H_*(G)$. The integral homology of a torsion-free abelian group $A$ is naturally isomorphic to the exterior algebra $\Lambda^*(A)$ (see \cite[Theorem~V.6.4(ii)]{Brown1982}), so we have homomorphisms of graded abelian groups
\begin{equation}
\label{eq:inclusion-Z-to-Q-exterior}
\Lambda^*\Bigl(\bigoplus_{\fc} \bZ\Bigr) \longrightarrow H_*(G) \longrightarrow \Lambda^*\Bigl(\bigoplus_{\fc} \bQ\Bigr)
\end{equation}
whose composition is injective by Lemma \ref{lem:exterior-Z-to-Q}. In particular the first map must be injective.
\end{proof}

In order to apply Lemma \ref{lem:lift-uncount}, we will need to be able to construct embeddings of direct sums of copies of $\bQ$ into the first homology of big mapping class groups. The key topological input for this is a theorem of Domat, which we recall below and whose proof uses the machinery of Bestvina, Bromberg and Fujiwara \cite{BestvinaBromgergFujiwara2015}. We first make some definitions that are implicit in the statement of \cite[Theorem~6.1]{Domat2022}.

\begin{defn}
\label{defn:escaping-sequence}
Let $S$ be a connected surface with at least two ends. Let us call a sequence $\{\gamma_i\}_{i \in \bN}$ of isotopy classes of simple closed curves on $S$ an \emph{escaping sequence} if:
\begin{itemizeb}
\item each $\gamma_i$ is end-separating, i.e., cutting along it disconnects $S$ into two non-compact surfaces;
\item $\gamma_i$ and $\gamma_j$ have pairwise-disjoint representatives for $i\neq j$;
\item the sequence $\gamma_1,\gamma_2,\ldots$ eventually leaves every compact subset of $S$, i.e., if $K \subset S$ is a compact subset then only finitely many $\gamma_i$ may be isotoped to lie in $K$.
\end{itemizeb}
An escaping sequence $\{\gamma_i\}_{i \in \bN}$ is \emph{well-spaced} if there exists another escaping sequence $\{\gamma'_i\}_{i \in \bN}$ such that:
\begin{itemizeb}
\item $\gamma'_i$ is not isotopic to $\gamma_i$;
\item $\gamma'_i$ and $\gamma_j$ have pairwise-disjoint representatives for $i\neq j$;
\item there is a (necessarily non-trivial) element $g_i \in \PMap_c(S)$ taking $\gamma_i$ to $\gamma'_i$.
\end{itemizeb}
\end{defn}

\begin{rem}
\label{rem:escaping-sequence-existence}
It follows from the classification of surfaces that an escaping sequence exists on $S$ if and only if $S$ has infinite type. In addition, any escaping sequence becomes well-spaced after passing to an appropriate subsequence.
\end{rem}

\begin{ex}
\label{ex:punctured-Loch-Ness}
In the key example of $S=L'$ the once-punctured Loch Ness monster surface, we may for example take $\{\gamma_i\}_{i \in \bN}$ to be the sequence of curves pictured in Figure \ref{fig:punctured-Loch-Ness}. Each $\gamma_i$ is clearly end-separating, they are pairwise disjoint and no compact subset of $L'$ contains more than finitely many of them, so this sequence is \emph{escaping}. Moreover, taking $\gamma'_i = T_{\alpha_i}(\gamma_i)$ using the curves $\alpha_i$ also pictured in Figure \ref{fig:punctured-Loch-Ness}, we obtain another escaping sequence $\{\gamma'_i\}_{i \in \bN}$ witnessing that $\{\gamma_i\}_{i \in \bN}$ is \emph{well-spaced}.
\end{ex}

\begin{ex}
\label{ex:flute-surface}
As another example, we may consider the \emph{flute surface} depicted in Figure \ref{fig:flute-surface}, together with the curves $\gamma_i$ illustrated. These form an escaping sequence $\{\gamma_i\}_{i \in \bN}$, but this is \emph{not} a \emph{well-spaced} escaping sequence: for example, one may attempt to construct another escaping sequence witnessing that it is well-spaced by setting $\gamma'_i = T_{\alpha_i}(\gamma_i)$ using the curves $\alpha_i$ illustrated, but then $\gamma'_i$ intersects $\gamma'_{i+1}$, so $\{\gamma'_i\}_{i \in \bN}$ is not an escaping sequence as in Definition \ref{defn:escaping-sequence}. However, the subsequence $\{\gamma_{2i}\}_{i\in\bN}$ \emph{is} well-spaced, as witnessed by the subsequence $\{\gamma'_{2i}\}_{i\in\bN}$.
\end{ex}

\begin{figure}[tb]
    \centering
    \includegraphics[scale=0.6]{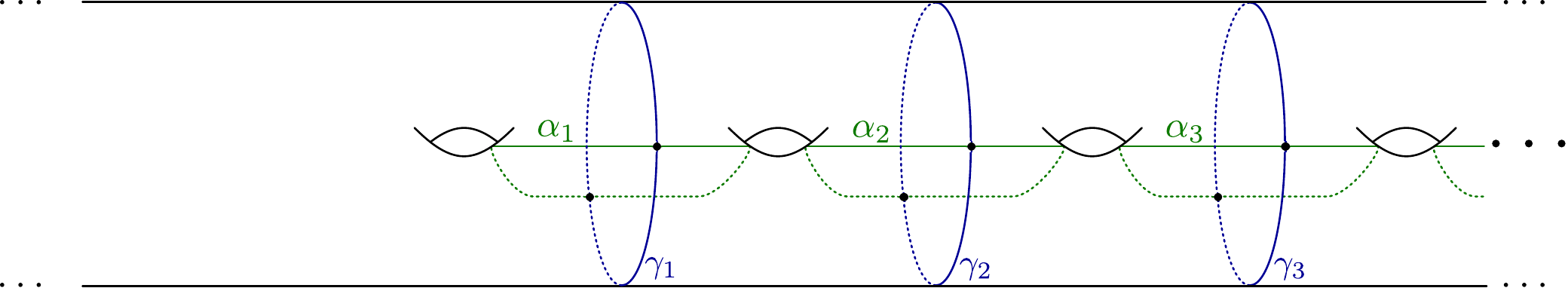}
    \caption{The once-punctured Loch Ness monster surface equipped with a sequence $\{\gamma_i\}_{i\in\bN}$ of simple closed curves that is a \emph{well-spaced}, \emph{escaping} sequence in the sense of Definition \ref{defn:escaping-sequence}. The fact that it is well-spaced is witnessed by the associated sequence of simple closed curves $\{\gamma'_i\}_{i\in\bN}$ given by $\gamma'_i = T_{\alpha_i}(\gamma_i)$.}
    \label{fig:punctured-Loch-Ness}
    \vspace{1em}
    \includegraphics[scale=0.6]{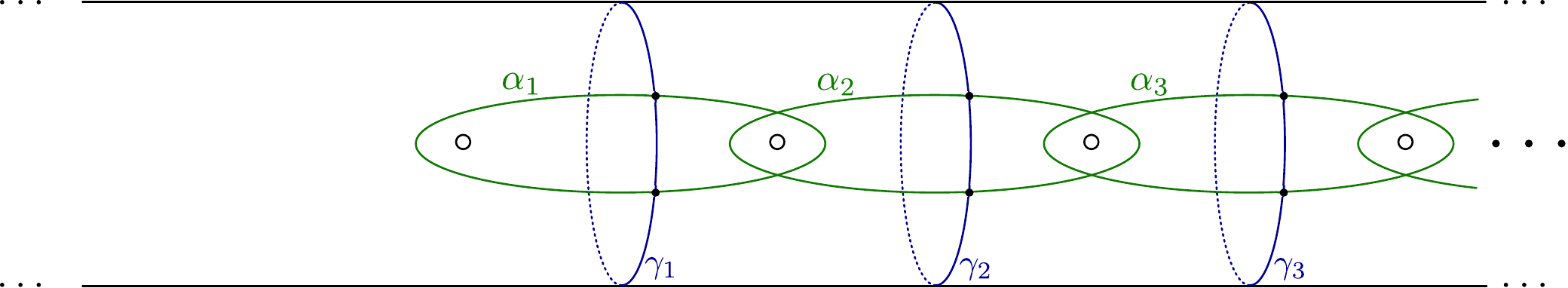}
    \caption{The flute surface equipped with a sequence $\{\gamma_i\}_{i\in\bN}$ of simple closed curves that is an \emph{escaping} sequence in the sense of Definition \ref{defn:escaping-sequence}. After passing to the subsequence $\{\gamma_{2i}\}_{i\in\bN}$, it becomes well-spaced, as explained in Example \ref{ex:flute-surface}.}
    \label{fig:flute-surface}
\end{figure}

\begin{thm}[{\cite[Theorem~6.1]{Domat2022}}]
\label{thm:Domat}
Let $S$ be an infinite-type surface with at least two ends and let $\{\gamma_i\}_{i \in \bN}$ be a well-spaced escaping sequence of simple closed curves on $S$. Let $a_1,a_2,\ldots$ be an unbounded sequence of positive integers. Then
\[
\prod_{i=1}^\infty (T_{\gamma_i})^{a_i} \in \overline{\PMap_c(S)}
\]
projects to a non-zero element in $\bigl( \overline{\PMap_c(S)} \bigr)^{ab}$.
\end{thm}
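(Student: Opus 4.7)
The plan is to detect non-triviality in the abelianisation by constructing an appropriate family of homogeneous quasimorphisms using the Bestvina--Bromberg--Fujiwara (BBF) projection complex machinery, and then packaging them into an honest homomorphism to a quotient of the Baer--Specker group. The well-spaced hypothesis is precisely what is needed to feed the BBF construction: the auxiliary curves $\gamma'_i = g_i(\gamma_i)$, together with the fact that $g_i \in \PMap_c(S)$ has compact support disjoint from $\gamma_j$ for large $j$, provide enough ``room'' around each $\gamma_i$ to witness a loxodromic action with the WPD property.

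First I would use the escaping and well-spaced properties (after passing to a subsequence if necessary) to find, for each $i$, a compact subsurface $\Sigma_i \subset S$ containing representatives of $\gamma_i$ and $\gamma'_i$ as well as a representative of $g_i$, and such that the $\Sigma_i$ are pairwise disjoint. Then $T_{\gamma_i}$ and $g_i$ both lie in the image of $\Map(\Sigma_i) \to \PMap_c(S)$, and elements supported in distinct $\Sigma_i$ commute. Next I would apply the BBF projection-complex construction to the collection of translates $\{h \cdot \gamma_i : h \in \overline{\PMap_c(S)}\}$ to obtain, for each $i$, a continuous homogeneous quasimorphism
\[
\phi_i \colon \overline{\PMap_c(S)} \longrightarrow \bR
\]
with defect $D_i$ depending only on the projection-complex constants, such that $\phi_i(T_{\gamma_i}) = 1$, $\phi_i(T_{\gamma_j}) = 0$ for $j \neq i$, and such that $\phi_i$ is \emph{additive} on commuting pairs whose supports lie in disjoint subsurfaces. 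The well-spaced hypothesis, together with the compactly-supported conjugator $g_i$, is what guarantees that $T_{\gamma_i}$ is loxodromic and satisfies WPD on the associated quasi-tree (this is the content of \cite{BestvinaBromgergFujiwara2015}).

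Having these quasimorphisms in hand, continuity of each $\phi_i$ on $\overline{\PMap_c(S)}$ applied to the partial products $P_N = \prod_{i=1}^N T_{\gamma_i}^{a_i}$, together with additivity on disjointly-supported commuting factors, yields $\phi_i(P) = a_i$ for every $i$, where $P = \prod_{i \in \bN} (T_{\gamma_i})^{a_i}$. Assembling these quasimorphisms gives a map
\[
\Phi \colon \overline{\PMap_c(S)} \longrightarrow \bR^{\bN},
\qquad h \longmapsto (\phi_i(h))_{i \in \bN},
\]
and I would argue (again using disjointness of the $\Sigma_i$ and uniform control on the defects $D_i$) that the composite
\[
\bar\Phi \colon \overline{\PMap_c(S)} \longrightarrow \bR^{\bN} / \ell^\infty(\bN,\bR)
\]
is an honest group homomorphism. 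Since $(a_i)_i$ is unbounded, $\Phi(P) = (a_i)_i$ is not bounded, so $\bar\Phi(P) \neq 0$; because the target is abelian, $\bar\Phi$ factors through $(\overline{\PMap_c(S)})^{ab}$ and the image of $P$ there must be non-zero.

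The hard part will be the BBF step, specifically: verifying that $T_{\gamma_i}$ acts loxodromically and WPD on the relevant quasi-tree, that the resulting quasimorphisms extend continuously from $\PMap_c(S)$ to its closure $\overline{\PMap_c(S)}$ (so that we may evaluate on the infinite product $P$), and that the defects $D_i$ can be chosen uniformly so that the product map $\Phi$ actually lands in sequences whose ``non-homomorphism defect'' on any pair of elements is bounded, allowing the passage to $\bR^{\bN}/\ell^\infty$ to kill the quasimorphism error. All of this is precisely where the well-spaced condition (as opposed to a mere escaping sequence, cf.\ Example~\ref{ex:flute-surface}) is used in an essential way.
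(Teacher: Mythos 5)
This statement is not proved in the paper at all: it is quoted verbatim as \cite[Theorem~6.1]{Domat2022}, and the only thing the paper says about its proof is that it ``uses the machinery of Bestvina, Bromberg and Fujiwara''. Measured against that, your proposal is a faithful reconstruction of the intended strategy: build BBF quasimorphisms $\phi_i$ on $\overline{\PMap_c(S)}$ dual to the twists $T_{\gamma_i}$ (with the well-spaced condition supplying the loxodromic/WPD input and the uniform projection constants), use homogeneity plus additivity on commuting, disjointly supported elements to get $\phi_i(P_N)=a_i$, and exploit the uniform defect bound together with unboundedness of $(a_i)$. Your final packaging into $\bR^{\bN}/\ell^\infty(\bN,\bR)$ is a legitimate variant of, and formally equivalent to, the standard Bavard-type argument Domat uses (a product of $k$ commutators has all its homogeneous quasimorphism values bounded by $(2k-1)D$, contradicting $a_i\to\infty$ along a subsequence); both work once the $D_i$ are uniform.

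The one step that is a genuine gap as written is the appeal to ``continuity of each $\phi_i$ on $\overline{\PMap_c(S)}$'' to pass from the partial products $P_N$ to $P=\prod_i T_{\gamma_i}^{a_i}$. Homogeneous quasimorphisms obtained from the BBF counting construction are not continuous for the compact-open topology in any way you can just cite, and you should not need them to be: the quasimorphisms are defined directly on the group $\overline{\PMap_c(S)}$ (which acts on the projection complex), so $\phi_i(P)$ makes sense without any limiting process. The correct replacement is to write $P=P_N\cdot Q_N$ with $Q_N$ the tail, observe that for $N\geq i$ the tail $Q_N$ is supported disjointly from the data defining $\phi_i$ and hence contributes at most a uniformly bounded error, and conclude $\lvert\phi_i(P)-a_i\rvert\leq C$ for a constant independent of $i$. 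This weaker estimate is all your $\ell^\infty$ argument needs. With that substitution (and with the verification, which you rightly flag as the hard part, that the translates of the $\gamma_i$ satisfy the projection axioms with uniform constants so that the defects $D_i$ are uniformly bounded), your outline matches the cited proof.
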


In fact, what is used in practice in \cite{Domat2022} is the following stronger fact, in the case when $S$ has genus at least three. It is implicit in \cite[\S 8.1.1]{Domat2022}; here we make the statement and the details of the proof explicit.

\begin{cor}
\label{cor:embedding-of-Q}
Let $S$ be an infinite-type surface of genus at least three with at least two ends and let $\{\gamma_i\}_{i \in \bN}$ be a well-spaced escaping sequence of simple closed curves on $S$. Let $a_1,a_2,a_3,\ldots$ be a strictly increasing sequence of positive integers. Then there is an injective homomorphism
$\varphi \colon \bQ \hookrightarrow \bigl( \overline{\PMap_c(S)} \bigr)^{ab}$
sending $1/n \in \bQ$ to the element
\[
\prod_{i=r_n}^\infty (T_{\gamma_i})^{a_i!/n} \in \bigl( \overline{\PMap_c(S)} \bigr)^{ab},
\]
where $r_n\geq 1$ is any integer sufficiently large so that $a_i\geq n$ for all $i\geq r_n$.
\end{cor}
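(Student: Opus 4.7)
The plan is to think of $\bQ$ as the colimit $\bQ = \varinjlim_n \tfrac{1}{n}\bZ$ and to define $\varphi$ by specifying compatible homomorphisms $\varphi_n \colon \tfrac{1}{n}\bZ \to \bigl(\overline{\PMap_c(S)}\bigr)^{ab}$, sending the generator $\tfrac{1}{n}$ to the element $\bar\xi_n$ represented by the infinite product $\prod_{i \geq r_n}(T_{\gamma_i})^{a_i!/n}$. Before anything else I would check that this product converges in $\overline{\PMap_c(S)}$: the partial products are compactly supported, and because $\{\gamma_i\}$ is an escaping sequence their supports eventually leave every compact subset of $S$, so the sequence of partial products is Cauchy in the quotient of the compact-open topology and converges to a well-defined $\xi_n \in \overline{\PMap_c(S)}$. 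Injectivity of $\varphi$ then reduces, since $\bQ$ is torsion-free, to showing $\bar\xi_n \neq 0$ for every $n$; this will follow directly from Theorem \ref{thm:Domat} applied to the tail $\{\gamma_i\}_{i \geq r_n}$ (still a well-spaced escaping sequence) with the strictly increasing, unbounded positive exponent sequence $a_i!/n$.

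The real content is verifying compatibility, namely that $m \bar\xi_{mn} = \bar\xi_n$ in the abelianisation and that $\bar\xi_n$ does not depend on the choice of $r_n$. Both reduce to the same statement: any compactly-supported product of Dehn twists dies in $\bigl(\overline{\PMap_c(S)}\bigr)^{ab}$. For this I would invoke the hypothesis that $S$ has genus at least three. A compact exhaustion $S_1 \subset S_2 \subset \cdots$ of $S$ by finite-type subsurfaces consists, eventually, of surfaces of genus at least three, whose mapping class groups (fixing boundary pointwise) are perfect by Powell's theorem. Since $\PMap_c(S)$ is the colimit of these mapping class groups it is itself perfect, and in particular its image in $\bigl(\overline{\PMap_c(S)}\bigr)^{ab}$ is trivial. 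This is exactly what is needed: the differences between $m\bar\xi_{mn}$ and $\bar\xi_n$, or between two candidates $\bar\xi_n, \bar\xi_n'$ coming from different choices of $r_n$, are represented by \emph{finite} products of Dehn twists, hence lie in $\PMap_c(S)$, hence vanish in the abelianisation.

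Once compatibility is in hand, setting $\varphi(p/q) := p\,\bar\xi_q$ yields a well-defined homomorphism $\bQ \to \bigl(\overline{\PMap_c(S)}\bigr)^{ab}$ sending $1/n$ to the element described in the statement, and the non-vanishing discussed above promotes it to an injection. The main obstacle is the compatibility step, which is the only place the genus-at-least-three hypothesis is genuinely used: without perfectness of $\PMap_c(S)$ a finite truncation of the tail product could contribute a non-trivial class in the abelianisation and break the consistency of $\varphi$ across different denominators.
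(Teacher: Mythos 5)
Your argument is essentially the paper's. The paper constructs $\varphi$ via the presentation $\bQ \cong \langle x_1,x_2,\ldots \mid x_n^n = x_{n-1}\rangle$ with $x_n = \varphi(1/n!)$, which is the same inductive root-taking as your colimit $\bQ = \varinjlim \tfrac{1}{n}\bZ$, and the compatibility of the roots is verified exactly as you do: the discrepancy between two truncations is a finite product of Dehn twists, supported on a compact genus-$\geq 3$ subsurface whose mapping class group is perfect (Powell), hence trivial in the abelianisation. One small correction: injectivity of a homomorphism out of $\bQ$ does \emph{not} reduce to $\varphi(1/n)\neq 0$ for all $n$ --- the quotient $\bQ \to \bQ/2\bZ$ satisfies that condition but is not injective. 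The correct reduction is that $\varphi$ is injective iff its restriction to $\bZ$ is, i.e.\ iff $\varphi(1)$ has infinite order; this follows from the same application of Theorem \ref{thm:Domat}, since the exponent sequence $(m\,a_i!)$ is unbounded for every $m\geq 1$, which is exactly how the paper argues.
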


\begin{proof}
Using the presentation $\bQ \cong \langle x_1, x_2, x_3, \ldots | (x_n)^n = x_{n-1} \rangle$, where $x_n$ corresponds to $1/n! \in \bQ$, we see that in order to define a homomorphism $\varphi \colon \bQ \to G$, for any group $G$, it suffices to choose an element $\varphi(1)$ of $G$, a square root $\varphi(1/2!)$ of $\varphi(1)$, a cube root $\varphi(1/3!)$ of $\varphi(1/2!)$, etc. We begin by choosing
\[
\varphi(1) = \prod_{i=1}^\infty (T_{\gamma_i})^{a_i!} \in \bigl( \overline{\PMap_c(S)} \bigr)^{ab}.
\]
This is non-trivial by Theorem \ref{thm:Domat}, since the sequence $(a_i!)$ is unbounded. In fact, Theorem \ref{thm:Domat} implies that $\varphi(1)$ has infinite order, since the sequence $(na_i!)$ is unbounded for all $n\geq 1$. We next need to choose a square root $\varphi(1/2!)$ of this element. First choose $r_2 \geq 1$ so that $a_i \geq 2$ for all $i\geq r_2$ (this is possible since $(a_i)$ is strictly increasing). Then set
\[
\varphi(1/2!) = \prod_{i=r_2}^\infty (T_{\gamma_i})^{a_i!/2!} \in \bigl( \overline{\PMap_c(S)} \bigr)^{ab}
\]
and notice that
\[
\frac{\varphi(1)}{2\varphi(1/2!)} = \prod_{i=1}^{r_2 - 1} (T_{\gamma_i})^{a_i!} \in \bigl( \overline{\PMap_c(S)} \bigr)^{ab}.
\]
This is a finite product of Dehn twists, so it is the image of the corresponding element of $\PMap_c(S)^{ab}$. Restricting further, choose a compact subsurface $S' \subset S$ containing the curves $\gamma_1,\ldots,\gamma_{r_2 - 1}$ in its interior and having genus at least three. The element above is then the image of the corresponding element of $\Map(S')^{ab}$. But the mapping class group of any compact, orientable surface of genus at least three is perfect~\cite{Birman1970,Powell78}, so $\Map(S')^{ab} = 0$ and hence $\varphi(1) = 2\varphi(1/2!)$. Continuing in the same way, we construct a cube root of $\varphi(1/2!)$, etc. Thus we have constructed a homomorphism $\varphi$ from $\bQ$.

Recall that any homomorphism defined on $\bQ$ is injective as long as its restriction to $\bZ \subset \bQ$ is injective. We observed above that $\varphi(1)$ has infinite order; hence $\varphi$ is injective. Finally, the formula for $\varphi(1/n)$ in the statement follows immediately from the construction, noting again that we may remove finitely many terms from the infinite product without changing the element of the abelianisation.
\end{proof}

The following corollary is again implicit in \cite[\S 8.1.1]{Domat2022}, but we prefer to make the statement and the details of the proof explicit. Let the surface $S$ and the sequences $\{\gamma_i\}_{i \in \bN}$ and $\{a_i\}_{i\in\bN}$ be as in Corollary \ref{cor:embedding-of-Q}. For any infinite subset $F \subseteq \bN$, denote by
\[
\varphi_F \colon \bQ \longhookrightarrow \bigl( \overline{\PMap_c(S)} \bigr)^{ab}
\]
the embedding obtained by applying Corollary \ref{cor:embedding-of-Q} to the sequences $\{\gamma_i\}_{i \in \bN}$ and $\{a_i\}_{i\in\bN}$.

\begin{cor}
\label{cor:embedding-of-QQ}
Let $\cF$ be a family of infinite subsets of $\bN$ such that any two of them have finite intersection. Then the homomorphism
\[
\Phi_\cF = \bigoplus_{F \in \cF} \varphi_F \colon \bigoplus_{F \in \cF} \bQ \longrightarrow \bigl( \overline{\PMap_c(S)} \bigr)^{ab}
\]
is also injective.
\end{cor}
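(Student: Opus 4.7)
I plan to prove injectivity of $\Phi_\cF$ by contradiction. Suppose that some finitely supported tuple $(q_F)_{F \in \cF'}$, with $\cF' \subset \cF$ finite and not all $q_F$ zero, lies in the kernel, so $\sum_{F \in \cF'} \varphi_F(q_F) = 0$ in $G^{ab}$, where $G := \PMapcbar{S}$. Multiplying by a common denominator $D$ of the $q_F$ and using that each $\varphi_F$ is a group homomorphism, this relation becomes the integer relation $\sum_{F \in \cF''} p_F\, \varphi_F(1) = 0$ in $G^{ab}$, where $\cF'' := \{F \in \cF' : q_F \neq 0\}$ is nonempty and $p_F := D q_F \in \bZ \setminus \{0\}$ for $F \in \cF''$. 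It suffices to derive a contradiction from this.

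Using the pairwise-finite-intersection hypothesis on $\cF$, choose $N \in \bN$ so large that the tails $F \cap [N, \infty)$ for $F \in \cF''$ are pairwise disjoint. Decompose $\varphi_F(1) = A_F B_F$ with $A_F := \prod_{i \in F,\, i < N} T_{\gamma_i}^{a_i!}$ a finite product and $B_F := \prod_{i \in F,\, i \geq N} T_{\gamma_i}^{a_i!}$ the remaining infinite product in $G$. As in the proof of Corollary \ref{cor:embedding-of-Q}, the combined finite product $\prod_{F \in \cF''} A_F^{p_F}$ is supported on a compact subsurface $S' \subset S$ which, since $S$ has genus at least three, we may enlarge so that $S'$ itself has genus at least three; Powell's theorem then gives $\Map(S')^{ab} = 0$, and the image of $\prod_{F \in \cF''} A_F^{p_F}$ in $G^{ab}$ vanishes. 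The relation therefore reduces to $\sum_{F \in \cF''} p_F [B_F] = 0$ in $G^{ab}$.

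Because the supports of the $B_F$ are pairwise disjoint (by our choice of $N$) and the relevant Dehn twists commute, the infinite products combine into a single infinite product
\[
\prod_{F \in \cF''} B_F^{p_F} \;=\; \prod_{i \in E} T_{\gamma_i}^{c_i}, \qquad E := \bigsqcup_{F \in \cF''} \bigl( F \cap [N, \infty) \bigr), \qquad c_i := p_F\, a_i! \text{ for } i \in F \cap [N, \infty).
\]
The subsequence $(\gamma_i)_{i \in E}$ remains well-spaced and escaping (both properties are inherited by any infinite subsequence, with the restricted $\{\gamma'_i\}_{i \in E}$ witnessing well-spacedness), and $|c_i| \to \infty$ along $E$ since $a_i! \to \infty$ and each $p_F$ for $F \in \cF''$ is nonzero. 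Applying Theorem \ref{thm:Domat} to $(\gamma_i)_{i \in E}$ with exponents $(c_i)_{i \in E}$ produces a nontrivial class in $G^{ab}$, contradicting the reduced relation.

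The main obstacle is this very last step: Theorem \ref{thm:Domat} is stated for positive integer exponents, whereas our $c_i$ may have mixed signs when the $p_F$ do. The required sign-aware extension -- any infinite product $\prod T_{\gamma_i}^{b_i}$ over a well-spaced escaping sequence with $|b_i|$ unbounded is nonzero in the abelianization -- follows from the same BBF quasi-morphism construction that underlies the proof of Theorem \ref{thm:Domat}, so I would either cite this strengthening from Domat's paper or record a short preparatory lemma establishing it before invoking it here.
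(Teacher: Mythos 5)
Your argument is correct and follows essentially the same route as the paper's own proof: clear denominators, use the finite pairwise intersections to reduce (after discarding a finite head that dies in the abelianisation) to a single infinite product of Dehn twists with unbounded exponents over a subsequence that is still a well-spaced escaping sequence, and invoke Theorem \ref{thm:Domat}. The sign caveat you flag at the end is equally implicit in the paper's proof, which likewise applies Theorem \ref{thm:Domat} to a product whose exponents $m_{F_j} a_i!$ may be negative, so your explicit acknowledgement of the needed strengthening is, if anything, the more careful treatment.
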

\begin{proof}
Let $(r_F) \in \mathrm{ker}(\Phi_\cF)$. Since the domain of $\Phi_\cF$ is a direct sum, there are only finitely many $F \in \cF$ such that $r_F \neq 0$; let us enumerate these as $F_1,\ldots,F_s$. Also choose $n\geq 1$ so that $m_F := nr_F \in \bZ$. We therefore have
\[
0 = \Phi_\cF(n(r_F)) = \Phi_\cF((m_F)) = \prod_{i \in F_1} \left( (T_{\gamma_i})^{a_i!} \right)^{m_{F_1}} \cdots \prod_{i \in F_s} \left( (T_{\gamma_i})^{a_i!} \right)^{m_{F_s}}.
\]
By Theorem \ref{thm:Domat}, this product can only be zero if it is a \emph{finite} product. But each $F_1,\ldots,F_s$ is infinite. Moreover, two terms of the product can only cancel if they are indexed by an element of one of the pairwise intersections $F_p \cap F_q$ for $p \neq q \in \{1,\ldots,s\}$, all of which are finite by assumption. Thus only finitely many cancellations can occur, so the only possible way for this product to be zero is if $s=0$, which means that $(r_F) = 0$. Thus $\Phi_\cF$ is injective.
\end{proof}

\section{Proof of Theorem \ref{athm:PMap}}
\label{s:Loch-Ness}

We are now ready to prove Theorem \ref{athm:PMap}. The tools of the previous section imply almost immediately the following result.

\begin{prop}
\label{prop:closure-of-compactly-supported-mcg}
Let $S$ be an infinite type surface of genus at least three with at least two ends. Then there is an injective homomorphism of graded abelian groups
\[
\Lambda^*\Bigl(\bigoplus_{\fc} \bZ\Bigr) \longhookrightarrow H_*(\overline{\PMap_c(S)}).
\]
\end{prop}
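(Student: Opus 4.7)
The plan is to apply Lemma~\ref{lem:lift-uncount} directly to $G = \overline{\PMap_c(S)}$, using the almost-disjoint families machinery from Corollary~\ref{cor:embedding-of-QQ} to supply $f$ and a closely related construction to supply $\iota$.

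First I would fix the data. Since $S$ is of infinite type with at least two ends, Remark~\ref{rem:escaping-sequence-existence} provides a well-spaced escaping sequence $\{\gamma_i\}_{i\in\bN}$ of simple closed curves on $S$; I would also fix a strictly increasing sequence of positive integers $\{a_i\}_{i\in\bN}$ (for instance $a_i = i$). Next, I would invoke the classical fact that there is an almost-disjoint family $\cF$ of infinite subsets of $\bN$ with $|\cF| = \fc$ (i.e.\ any two distinct members of $\cF$ have finite intersection). Applying Corollary~\ref{cor:embedding-of-QQ} to this data produces an injective homomorphism
\[
f := \Phi_\cF \colon \bigoplus_{F\in\cF} \bQ \longhookrightarrow \bigl(\overline{\PMap_c(S)}\bigr)^{ab} = H_1(\overline{\PMap_c(S)}),
\]
which is the bottom row of the diagram \eqref{eq:uncount-lemm}.

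Next I would construct $\iota$. For each $F \in \cF$ set
\[
g_F := \prod_{i\in F} (T_{\gamma_i})^{a_i!} \in \overline{\PMap_c(S)},
\]
which is well-defined because the partial products are supported on disjoint compact subsurfaces and therefore converge in the closure topology. Because the curves $\{\gamma_i\}$ are pairwise disjoint, the Dehn twists $T_{\gamma_i}$ commute pairwise, and hence so do the elements $\{g_F\}_{F\in\cF}$. Thus the assignment $e_F \mapsto g_F$ extends to a homomorphism
\[
\iota \colon \bigoplus_{F\in\cF} \bZ \longrightarrow \overline{\PMap_c(S)}
\]
from the free abelian group on $\cF$. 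By construction, the composition $\alpha\circ\iota$ sends the generator $e_F$ to the abelianised class of $g_F$, which by the formula in Corollary~\ref{cor:embedding-of-Q} (applied at $1 \in \bQ$) coincides with $\varphi_F(1) = f(e_F)$, with finitely many terms at the front absorbed by $\Map(S')^{ab} = 0$ as in that proof. Consequently the square \eqref{eq:uncount-lemm} commutes.

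With these ingredients in place, Lemma~\ref{lem:lift-uncount} applies and yields the desired injection $\Lambda^*\bigl(\bigoplus_\fc \bZ\bigr) \hookrightarrow H_*(\overline{\PMap_c(S)})$. The only subtle points, which I would be careful to verify, are that the infinite products $g_F$ genuinely lie in the closure (using that each prefix is compactly supported and the tails converge in the compact-open quotient topology) and that the diagram really commutes on the nose, which comes down to the observation that replacing $\prod_{i\in F}$ by $\prod_{i\in F,\, i\geq r}$ only alters $\iota(e_F)$ by a product of Dehn twists supported in a compact genus-$\geq 3$ subsurface, whose mapping class group is perfect. Beyond these bookkeeping points, the proposition follows formally from the material already established.
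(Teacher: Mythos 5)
Your proposal is correct and follows essentially the same route as the paper's own proof: choose a well-spaced escaping sequence and an almost-disjoint family $\cF$ of cardinality $\fc$, use Corollary \ref{cor:embedding-of-QQ} for the bottom map of the square \eqref{eq:uncount-lemm}, lift via the infinite products $\prod_{i\in F}(T_{\gamma_i})^{a_i!}$, and conclude with Lemma \ref{lem:lift-uncount}. The extra care you take over convergence of the infinite products and commutativity of the square is a welcome elaboration of details the paper leaves implicit, but it is not a different argument.
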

\begin{proof}
Choose a well-spaced escaping sequence $\{\gamma_i\}_{i \in \bN}$ of simple closed curves on $S$ (such a sequence always exists by Remark~\ref{rem:escaping-sequence-existence}) and set $a_i = i$. Choose a family $\cF$ of infinite subsets of $\bN$ such that any two of them have finite intersection, and such that the family $\cF$ has the cardinality of the continuum. (For example, we may identify $\bN$ with $\bQ$ and choose for each $a \in \bR$ a sequence of distinct rationals converging to $a$.) There is then a commutative diagram
\begin{equation}
\label{eq:square-for-closure-of-PMapc}
\begin{tikzcd}
\displaystyle\bigoplus_{\cF} \bZ \ar[r] \ar[d,hook] & \overline{\PMap_c(S)} \ar[d,two heads] \\
\displaystyle\bigoplus_{\cF} \bQ \ar[r,"\Phi_\cF",hook] & \bigl(\overline{\PMap_c(S)}\bigr)^{ab},
\end{tikzcd}
\end{equation}
where the bottom horizontal map $\Phi_\cF$ is injective by Corollary \ref{cor:embedding-of-QQ} and its lift to $\overline{\PMap_c(S)}$ after restricting to $\bZ \subset \bQ$ in each summand is given by sending the generator $1 \in \bZ$ of the summand corresponding to $F \in \cF$ to the element
\[
\prod_{i \in F} (T_{\gamma_i})^{i!} \in \overline{\PMap_c(S)}.
\]
The result then follows by an application of Lemma \ref{lem:lift-uncount}.
\end{proof}

\begin{rem}
\label{rmk:closure-of-compactly-supported-mcg}
Proposition \ref{prop:closure-of-compactly-supported-mcg} also holds without the assumption that $S$ has genus at least $3$. This follows from an analogue of Corollary \ref{cor:embedding-of-Q} that involves a sequence of pseudo-Anosov elements supported on pairwise-disjoint compact subsurfaces of $S$ instead of Dehn twists; see \cite[\S 8.1.2]{Domat2022} for more details of this construction. One then obtains a diagram of the form \eqref{eq:square-for-closure-of-PMapc}, where the horizontal maps are defined using infinite products of powers of these pseudo-Anosov elements instead of Dehn twists, and the result then follows from Lemma \ref{lem:lift-uncount}.
\end{rem}

We next deduce the analogue of Theorem \ref{athm:uncountable} for the Loch Ness monster surface $L$ and the surface $L'$ obtained by removing one puncture from $L$.

\begin{prop}
\label{prop:uncountability-Loch-Ness}
The graded abelian groups $H_*(\Map(L'))$ and $H_*(\Map(L))$ each contain an embedded copy of the exterior algebra $\Lambda^*\bigl(\bigoplus_{\fc} \bZ\bigr)$.
\end{prop}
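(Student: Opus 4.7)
The plan is to handle the two statements separately: for $L'$ the result will follow from Proposition \ref{prop:closure-of-compactly-supported-mcg} almost immediately, while for $L$ we will transfer the $L'$ result via the Birman capping map plus a dimension-counting argument.

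For $L'$, the key observation is that $\Map(L')=\overline{\PMap_c(L')}$. Indeed, the two ends of $L'$ (the puncture and the non-planar end) are of distinct topological types, so by Proposition \ref{thm:ses-bmcg} every mapping class fixes both ends, giving $\Map(L')=\PMap(L')$; and since $L'$ has a unique non-planar end, $\PMap(L')=\overline{\PMap_c(L')}$ by \cite[Theorem~4]{PriyamVlamis18}. As $L'$ has infinite genus and exactly two ends, Proposition \ref{prop:closure-of-compactly-supported-mcg} then yields the required embedding into $H_*(\Map(L'))$.

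For $L$, we write $L'=L\smallsetminus\{x\}$ and use the Birman exact sequence
\[
1 \longrightarrow \pi_1(L,x) \longrightarrow \Map(L') \longrightarrow \Map(L) \longrightarrow 1.
\]
Following the proof of Proposition \ref{prop:closure-of-compactly-supported-mcg}, we choose a well-spaced escaping sequence $\{\gamma_i\}_{i\in\bN}$ of end-separating simple closed curves on $L'$ with the additional property that they escape towards the non-planar end (so each $\gamma_i$ lies in the complement of a fixed neighbourhood of $x$), as in Figure \ref{fig:punctured-Loch-Ness}. Fixing a family $\cF$ of infinite subsets of $\bN$ with pairwise finite intersection and cardinality $\fc$, and setting $g_F=\prod_{i\in F}(T_{\gamma_i})^{i!}$, Corollary \ref{cor:embedding-of-QQ} produces an injective homomorphism $\Phi_\cF\colon\bigoplus_\cF\bQ\hookrightarrow H_1(\Map(L'))$ sending $e_F\mapsto[g_F]$. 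Because each $\gamma_i$ stays away from $x$, each $g_F$ also determines an element $\bar g_F=\prod_{i\in F}(T_{\gamma_i})^{i!}\in\overline{\PMap_c(L)}=\Map(L)$; these elements commute pairwise (being products of Dehn twists around pairwise disjoint curves), so they assemble into a homomorphism $\iota\colon\bigoplus_\cF\bZ\to\Map(L)$ sending $e_F\mapsto\bar g_F$.

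The hard part will be to show that enough of the classes $[\bar g_F]$ remain linearly independent in $H_1(\Map(L))$, since the induced map $\psi\colon H_1(\Map(L'))\to H_1(\Map(L))$ need not be injective on the copy of $\bigoplus_\cF\bQ$. To control $\ker\psi$ we apply the 5-term exact sequence to the Birman extension, which identifies $\ker\psi$ with a quotient of $\pi_1(L,x)^{\mathrm{ab}}_{\Map(L)}$; this abelian group is \emph{countable} because $\pi_1(L,x)$ is free of countably infinite rank. Consequently the kernel $K$ of $\psi\circ\Phi_\cF\colon\bigoplus_\cF\bQ\to H_1(\Map(L))$ is a $\bQ$-subspace whose image under the injective $\Phi_\cF$ lies inside a countable abelian group, forcing $\dim_\bQ K\leq\aleph_0$. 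A countable $\bQ$-basis of $K$ then involves only countably many indices $F\in\cF$, say those in a subfamily $\cG\subseteq\cF$; the complement $\cF'=\cF\smallsetminus\cG$ still has cardinality $\fc$ and satisfies $K\cap\bigoplus_{\cF'}\bQ=0$. Restricting both $\iota$ and $\psi\circ\Phi_\cF$ to $\cF'$ therefore produces the commutative square required by Lemma \ref{lem:lift-uncount}, with bottom arrow injective, which yields the desired embedding $\Lambda^*\bigl(\bigoplus_\fc\bZ\bigr)\hookrightarrow H_*(\Map(L))$.
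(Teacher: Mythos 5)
Your proof is correct and follows essentially the same route as the paper's: identify $\Map(L')=\PMap(L')=\overline{\PMap_c(L')}$ and invoke Proposition \ref{prop:closure-of-compactly-supported-mcg}, then descend to $\Map(L)$ via the Birman exact sequence, using countability of the kernel on $H_1$ to discard countably many summands before applying Lemma \ref{lem:lift-uncount} (your discarding step is exactly the paper's Lemma \ref{lem:countable-kernel-injective}; note only that $K$ need not be a $\bQ$-subspace, but its countability alone already confines it to countably many summands, which is all you use).
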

\begin{proof}
Since $L'$ has at most one non-planar end, \cite[Theorem 4]{PriyamVlamis18} implies that $\overline{\PMap_c(L')} = \PMap(L')$. We also have $\PMap(L') = \Map(L')$ since $L'$ has only two punctures, which cannot be interchanged by a homeomorphism of $L'$ since exactly one of them is non-planar. Thus the result for $L'$ is a special case of Proposition \ref{prop:closure-of-compactly-supported-mcg}. In this case, the sequence of simple closed curves $\gamma_i$ may be taken to be those illustrated in Figure \ref{fig:punctured-Loch-Ness} (see Example \ref{ex:punctured-Loch-Ness}).

In order to deduce the result for $L$, we use the Birman exact sequence, which takes the form
\begin{equation}
\label{eq:Birman-exact-ses}
1 \to \pi_1(L) \longrightarrow \Map(L') \longrightarrow \Map(L) \to 1.
\end{equation}
Since abelianisation is a right-exact functor, it follows that the kernel of $H_1(\Map(L')) \to H_1(\Map(L))$ is a quotient of $H_1(L)$; in particular it is \emph{countable}. Consider the diagram
\begin{equation}
\label{eq:induced-square-extended}
\begin{tikzcd}
\displaystyle\bigoplus_{\cF} \bZ \ar[r] \ar[d,hook] & \Map(L') \ar[d,two heads] \ar[r] & \Map(L) \ar[d,two heads] \\
\displaystyle\bigoplus_{\cF} \bQ \ar[r,"\Phi_\cF",hook] & (\Map(L'))^{ab} \ar[r,"(*)"] & (\Map(L))^{ab}
\end{tikzcd}
\end{equation}
where the left-hand square is \eqref{eq:square-for-closure-of-PMapc} in the case $S = L'$ and the right-hand square is induced by \eqref{eq:Birman-exact-ses}. We know that $(*)$ has countable kernel by the discussion above, so Lemma \ref{lem:countable-kernel-injective} implies that, after removing countably many terms from the direct sum on the left-hand side, the composition across the bottom of \eqref{eq:induced-square-extended} is also injective. We therefore obtain a diagram
\begin{equation}
\label{eq:induced-square-modified}
\begin{tikzcd}
\displaystyle\bigoplus_{\fc} \bZ \ar[r] \ar[d,hook] & \Map(L) \ar[d,two heads] \\
\displaystyle\bigoplus_{\fc} \bQ \ar[r,"{(*)'}",hook] & (\Map(L))^{ab},
\end{tikzcd}
\end{equation}
where $(*)'$ is injective and the direct sums on the left-hand side are still indexed by a set with the cardinality of the continuum. The result for $L$ thus follows from Lemma \ref{lem:lift-uncount}.
\end{proof}

\begin{rem}
\label{rmk:closure-of-compactly-supported-mcg-2}
We noted in Remark \ref{rmk:closure-of-compactly-supported-mcg} that Proposition \ref{prop:closure-of-compactly-supported-mcg} holds without the assumption on the genus of $S$, i.e.~it holds for any infinite type surface $S$ with at least two ends. On the other hand, if $S$ is an infinite type surface with at most one end, it must be the Loch Ness monster surface $S=L$, and the result then follows from Proposition \ref{prop:uncountability-Loch-Ness} (see also \cite[Appendix]{Domat2022}). Thus, in fact, Proposition \ref{prop:closure-of-compactly-supported-mcg} holds for any infinite type surface $S$. This is the first part of Theorem \ref{athm:PMap}:
\end{rem}

\begin{cor}
\label{cor:closure-of-compactly-supported-mcg}
Let $S$ be an infinite-type surface. Then the graded abelian group $H_*(\overline{\PMap_c(S)})$ contains an embedded copy of the exterior algebra $\Lambda^*\bigl(\bigoplus_{\fc} \bZ\bigr)$, induced by an embedding $\bigoplus_{\fc} \bZ \hookrightarrow \overline{\PMap_c(S)}$.
\end{cor}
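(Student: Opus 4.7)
The plan is to prove the corollary by case analysis on the number of ends of $S$, combining the two propositions already established in this section. By the classification of infinite-type surfaces (Theorem \ref{thm:clas-inf-sur}), either $S$ has at least two ends, or $S$ has exactly one end.

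In the first case, I would apply Proposition \ref{prop:closure-of-compactly-supported-mcg}, which directly produces the required embedding $\bigoplus_{\fc} \bZ \hookrightarrow \PMapcbar{S}$ (and the induced embedding of the exterior algebra into homology) whenever $S$ additionally has genus at least three. To cover the subcase where $S$ has finite genus less than three, I would invoke the variant noted in Remark \ref{rmk:closure-of-compactly-supported-mcg}: the proof of Proposition \ref{prop:closure-of-compactly-supported-mcg} runs verbatim after replacing each Dehn twist $T_{\gamma_i}$ along a curve of the well-spaced escaping sequence with a pseudo-Anosov element supported on a compact subsurface of $S$, taking the supporting subsurfaces to be pairwise disjoint and to exhaust $S$ in the sense of Definition \ref{defn:escaping-sequence}. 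The analogues of Corollary \ref{cor:embedding-of-Q} and Corollary \ref{cor:embedding-of-QQ} explained in \cite[\S 8.1.2]{Domat2022} then provide the bottom row of a square of the form \eqref{eq:square-for-closure-of-PMapc}, and Lemma \ref{lem:lift-uncount} finishes the job.

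In the second case, the classification forces $S$ to be the Loch Ness monster surface $L$. Here $L$ has exactly one (non-planar) end, so in particular at most one non-planar end, and \cite[Theorem 4]{PriyamVlamis18} yields $\PMapcbar{L} = \PMap(L)$; moreover $\PMap(L) = \Map(L)$ because the action on a one-point end space is automatically trivial. Proposition \ref{prop:uncountability-Loch-Ness} therefore supplies an embedding $\bigoplus_{\fc} \bZ \hookrightarrow \Map(L) = \PMapcbar{L}$ whose induced map in homology extends to an embedding of the exterior algebra $\Lambda^*\bigl(\bigoplus_{\fc}\bZ\bigr)$ into $H_*(\PMapcbar{L})$.

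Since essentially all of the technical work is packaged into Lemma \ref{lem:lift-uncount}, Proposition \ref{prop:closure-of-compactly-supported-mcg} (together with its pseudo-Anosov variant), and Proposition \ref{prop:uncountability-Loch-Ness}, there is no substantial new obstacle here. The only minor point worth verifying is that the embedding produced in the Loch Ness case — which is constructed in Proposition \ref{prop:uncountability-Loch-Ness} by transferring an embedding from $\Map(L')$ across the Birman exact sequence — genuinely lands in the subgroup $\PMapcbar{L}$ rather than merely in $\Map(L)$, but this is immediate from the coincidence $\PMapcbar{L} = \Map(L)$ noted above.
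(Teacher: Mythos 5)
Your proposal is correct and matches the paper's own argument, which is spelled out in Remark \ref{rmk:closure-of-compactly-supported-mcg-2}: the case of at least two ends is handled by Proposition \ref{prop:closure-of-compactly-supported-mcg} together with its pseudo-Anosov variant from Remark \ref{rmk:closure-of-compactly-supported-mcg}, and the one-ended case reduces to the Loch Ness monster surface and Proposition \ref{prop:uncountability-Loch-Ness}. No gaps; this is essentially the same proof.
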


\begin{rem}
There are two points where this proof is not entirely constructive. The first is the choice of the family $\cF = \{ \Lambda_a \mid a \in \bR \}$ of infinite subsets of $\bN$. However, this may easily be made explicit by choosing an explicit bijection between $\bN$ and $\bQ$ and then letting $\Lambda_a \subseteq \bQ$, for $a \in \bR$, be the sequence of rational numbers converging to $a \in \bR$ given by truncating the binary expansion of $a$. The second point where it is non-constructive is in passing from diagram \eqref{eq:induced-square-extended} to diagram \eqref{eq:induced-square-modified} by throwing away countably many real numbers indexing the direct sum on the left-hand side. However, looking carefully at the proof of Lemma \ref{lem:countable-kernel-injective}, one may make this step constructive too.
\end{rem}

\begin{rem}
\label{rem:hmg-PMap-uncountable}
When $S$ has at most one non-planar end, the pure mapping class group $\PMap(S)$ coincides with $\overline{\PMap_c(S)}$, by \cite[Theorem~4]{PriyamVlamis18}. Thus Corollary \ref{cor:closure-of-compactly-supported-mcg} says that $H_\ast(\PMap(S))$ is uncountable in every positive degree when $S$ has at most one non-planar end. This statement also holds when $S$ has infinitely many non-planar ends. Indeed, by \cite[Corollary 6]{APV20}, we have in this case that
\[
\PMap(S) \;\cong\; \overline{\PMap_c(S)} \rtimes \BZ^{\BN}.
\]
In particular, $\BZ^{\BN}$ is a retract of $\PMap(S)$, so the natural induced map $H_i(\BZ^{\BN}) \to H_i(\PMap(S))$ is split-injective in every degree. The fact that that $H_\ast(\PMap(S))$ is uncountable in every positive degree in this case is therefore an immediate corollary of the following lemma.
\end{rem}

\begin{lem}
\label{lem:Baer-Specker}
The homology group $H_i(\BZ^{\BN})$ contains a direct summand isomorphic to $\BZ^{\BN}$ in every degree $i > 0$. Hence it contains a subgroup isomorphic to $\bigoplus_{\fc} \bZ$ in every degree $i > 0$. 
\end{lem}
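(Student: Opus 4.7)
The plan is to invoke the standard identification $H_i(A;\BZ) \cong \Lambda^i(A)$ for any torsion-free abelian group $A$ (\cite[Theorem V.6.4(ii)]{Brown1982}, already used in the proof of Lemma~\ref{lem:lift-uncount}) and apply it to $A = \BZ^{\BN}$, which is torsion-free. This reduces the first assertion to showing that $\Lambda^i(\BZ^{\BN})$ admits $\BZ^{\BN}$ as a direct summand for every $i \geq 1$, and the second assertion to embedding $\bigoplus_{\fc} \BZ$ into $\BZ^{\BN}$ itself.

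For the direct summand statement, I would exploit the tautological splitting $\BZ^{\BN} = V \oplus W$ obtained by isolating the first coordinate, with $V \cong \BZ$ and $W \cong \BZ^{\BN}$. Since $\Lambda^j(V) = 0$ for $j \geq 2$, the exterior-power decomposition of a direct sum collapses to
\[
\Lambda^i(\BZ^{\BN}) \;\cong\; \Lambda^i(W) \,\oplus\, \Lambda^{i-1}(W) \;\cong\; \Lambda^i(\BZ^{\BN}) \oplus \Lambda^{i-1}(\BZ^{\BN}),
\]
so $\Lambda^{i-1}(\BZ^{\BN})$ appears as a direct summand of $\Lambda^i(\BZ^{\BN})$. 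A straightforward induction on $i$, with base case $\Lambda^1(\BZ^{\BN}) = \BZ^{\BN}$, then shows that $\BZ^{\BN}$ itself sits as a direct summand of $\Lambda^i(\BZ^{\BN})$ in every positive degree.

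For the embedding $\bigoplus_{\fc} \BZ \hookrightarrow \BZ^{\BN}$, I would fix an almost disjoint family $\{A_r\}_{r \in \bR}$ of infinite subsets of $\bN$ (for example, identify $\bN$ with $\bQ$ and let $A_r$ be a sequence of distinct rationals converging to $r$) and map the generator indexed by $r$ to the indicator function $\chi_{A_r} \in \BZ^{\BN}$. Linear independence follows by a one-line argument: for any finite non-empty $F \subset \bR$ and $r \in F$, the set $A_r \setminus \bigcup_{s \in F \setminus \{r\}} A_s$ is cofinite in $A_r$ and hence non-empty, so evaluating any finite $\BZ$-linear relation among the $\chi_{A_r}$ at such a point forces the coefficient of $\chi_{A_r}$ to vanish.

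The argument is entirely mechanical and I do not foresee a real obstacle: the exterior-algebra decomposition is forced by splitting off a single $\BZ$ factor, and the construction of an almost disjoint family of size $\fc$ is classical.
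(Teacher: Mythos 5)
Your proposal is correct and is essentially the paper's argument: the paper obtains the direct summand by applying the K\"unneth theorem to the splitting $\BZ^{\BN} \cong \BZ^{\BN} \times \BZ^{i}$, which is the same mechanism as your exterior-power decomposition after splitting off coordinates (just done in one step rather than by induction), and the second part uses exactly the same almost disjoint family of indicator functions $\chi_F \in \BZ^{\BN}$. Your write-up merely makes explicit the linear-independence check that the paper leaves as ``easy to check''.
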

\begin{proof}
The first statement follows from the K{\"u}nneth theorem applied to the decomposition $\BZ^\BN \cong \BZ^\BN \times \BZ^i$. The second statement then follows from the fact that $\BZ^{\BN}$ contains free abelian groups of rank $\fc$. To see this, choose a family $\cF$, of cardinality $\lvert \cF \rvert = \fc$, of infinite subsets of $\bN$ such that any pair have finite intersection. (For example, as in the proof of Proposition \ref{prop:closure-of-compactly-supported-mcg}, we may identify $\bN$ with $\bQ$ and choose for each $a \in \bR$ a sequence of distinct rationals converging to $a$.) It is then easy to check that the collection
\[
\{ \chi_F \in \bZ^\bN \mid F \in \cF \},
\]
where $\chi_F \colon \bN \to \{0,1\} \subset \bZ$ denotes the indicator function of $F \subseteq \bN$, is $\bZ$-linearly independent and hence generates a subgroup of $\bZ^\bN$ isomorphic to $\bigoplus_\fc \bZ$.
\end{proof}

\begin{rem}
\label{rem:gap-pmap-hmg}
When $S$ has $n$ non-planar ends with $1< n<\infty$, by \cite[Corollary 6]{APV20} we have:
\begin{equation}
\label{eq:semi-direct-product-finite}
\PMap(S) \;\cong\; \overline{\PMap_c(S)} \rtimes \BZ^{n-1},
\end{equation}
where $\BZ^{n-1}$ is freely generated by $n-1$ handle shifts $h_1,\ldots, h_{n-1}$. As indicated in the proof of \cite[Theorem 5]{APV20}, one may choose the handle shifts $h_j$  to have pairwise disjoint support. Let $Y_j$ be the support of $h_j$. Recall that each $Y_j$ is a subsurface homeomorphic to the result of gluing handles onto $\BR \times [0,1]$ periodically with respect to the transformation $(x,y) \mapsto (x+1,y)$. For convenience, we shall require that the $i$-th handle is attached to $[i,i+1]\times [0,1]$ and that $h_j$ maps the $i$-th handle to the $(i+1)$-st handle. See Figure \ref{fig:handle-shifts-and-Dehn-twists} for an illustration. The semi-direct product decomposition \eqref{eq:semi-direct-product-finite} implies that
\begin{equation}
\label{eq:semi-direct-product-abelianisation}
H_1(\PMap(S)) \;\cong\; H_1(\overline{\PMap_c(S)})_{\bZ^{n-1}} \oplus \bZ^{n-1},
\end{equation}
where $(-)_{\bZ^{n-1}}$ denotes the coinvariants under the action of the handle shifts. By Theorem \ref{thm:Domat}, choosing the sequence of curves $\gamma_i$ as illustrated in Figure \ref{fig:handle-shifts-and-Dehn-twists} and any unbounded sequence of positive integers $a_i$, the infinite product of Dehn twists $f = \prod_{i=1}^{\infty} (T_{\gamma_i})^{a_i} \in \overline{\PMap_c(S)}$ represents a non-trivial element in the abelianisation $H_1(\overline{\PMap_c(S)})$. But it vanishes in $H_1(\PMap(S))$ -- in other words, in the coinvariants under the action of the handle shifts -- since $[f] = [g] - [h_1gh_1^{-1}]$, where $g = \prod_{i=1}^{\infty} (T_{\gamma_i})^{b_i}$, $b_i = \Sigma_{j=1}^i a_j$.
\end{rem}

\begin{figure}
    \centering
    \includegraphics{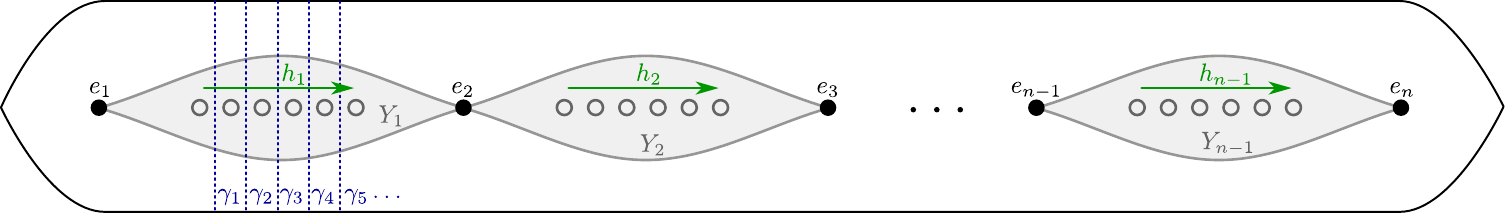}
    \caption{A surface with $n$ non-planar ends $e_1,\ldots,e_n$ for $2\leq n<\infty$. The top and bottom edges are identified to obtain a sphere, then the points $e_1,\ldots,e_n$ (together with a set of planar ends, which is not pictured) are removed, then we take a connected sum with a torus along each of the (infinitely many) small grey discs. The planar ends (not pictured) may have some or all of the non-planar ends $e_1,\ldots,e_n$ as limit points, but in any case lie \emph{outside} of the subsurfaces $Y_1,\ldots,Y_{n-1}$, which support the handle shifts $h_1,\ldots,h_{n-1}$. The curves $\gamma_1,\gamma_2,\gamma_3,\ldots$ are chosen as illustrated such that the handle shift $h_1$ sends $\gamma_i$ to $\gamma_{i+1}$ (up to isotopy).}
    \label{fig:handle-shifts-and-Dehn-twists}
\end{figure}

Recall that the Torelli group $\mathcal{T}(S)$ is the kernel of the natural homomorphism $\Map(S)\to \Aut(H_1(S))$.

\begin{thm}
\label{thm:torelli}
Let $S$ be an infinite-type surface. The integral homology group $H_i(\mathcal{T}(S))$ is uncountable for every $i \geq 1$. In fact it contains an embedded copy of $\bigoplus_{\fc} \bZ$ in every positive degree.
\end{thm}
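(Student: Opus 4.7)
The plan is to apply Lemma \ref{lem:lift-uncount} to $G = \mathcal{T}(S)$, following the pattern of the proof of Corollary \ref{cor:closure-of-compactly-supported-mcg} but with every Dehn twist replaced by an element that actually lies in $\mathcal{T}(S)$. Before doing so, I first record the promised inclusion $\mathcal{T}(S) \subseteq \overline{\PMap_c(S)}$. The intermediate containment $\mathcal{T}(S) \subseteq \PMap(S)$ follows from a standard argument using end-encircling (and, for non-planar ends, end-longitudinal) homology classes: a non-trivial permutation of the ends of $S$ necessarily permutes these classes in $H_1(S)$ non-trivially. To upgrade to $\mathcal{T}(S) \subseteq \overline{\PMap_c(S)}$, I use the semi-direct product decomposition $\PMap(S) \cong \overline{\PMap_c(S)} \rtimes H$ of \cite[Corollary 6]{APV20}, in which $H$ is the free abelian group generated by handle shifts supported on pairwise-disjoint bi-infinite handle cylinders. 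Each handle shift acts by a proper shift on the $\alpha_i,\beta_i$ generators of its cylinder, so by disjointness of the supports the combined action of $H$ on $H_1(S)$ is faithful. Any $f \in \mathcal{T}(S)$, written uniquely as $f = g \cdot h$ with $g \in \overline{\PMap_c(S)}$ and $h \in H$, must therefore have $h = e$, so $f \in \overline{\PMap_c(S)}$.

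When $S$ has at least two ends I choose a well-spaced escaping sequence $\{\gamma_i\}_{i\in\bN}$ of end-separating simple closed curves in which each consecutive pair $\gamma_i,\gamma_{i+1}$ cobounds a compact subsurface (compare Figure \ref{fig:punctured-Loch-Ness}). Then $[\gamma_i]=[\gamma_{i+1}]$ in $H_1(S)$, so the bounding-pair map $b_i := T_{\gamma_i}T_{\gamma_{i+1}}^{-1}$ lies in $\mathcal{T}(S)$, and $b_i$ commutes with $b_j$ whenever $|i-j|\geq 2$. Adapting Corollaries \ref{cor:embedding-of-Q} and \ref{cor:embedding-of-QQ} to use $\{b_i\}_{i\in\bN}$ in place of $\{T_{\gamma_i}\}_{i\in\bN}$ --- Domat's Bestvina--Bromberg--Fujiwara argument from \cite{Domat2022} extends to families of bounding-pair maps whose supports escape to infinity --- yields, for any almost-disjoint family $\cF$ of infinite subsets of $\bN$ of cardinality $\fc$, an injection $\bigoplus_\cF \bQ \hookrightarrow \mathcal{T}(S)^{ab}$ together with a compatible lift $\bigoplus_\cF \bZ \to \mathcal{T}(S)$ sending the generator corresponding to $F\in\cF$ to $\prod_{i\in F} b_i^{i!}$. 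This is exactly the input required by Lemma \ref{lem:lift-uncount}, which produces an embedding $\Lambda^*\bigl(\bigoplus_\fc \bZ\bigr) \hookrightarrow H_*(\mathcal{T}(S))$ and, in particular, a copy of $\bigoplus_\fc \bZ$ in every positive degree.

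The exceptional case is the Loch Ness monster $S = L$, which has only one end and hence admits no end-separating curves. Here I would argue exactly as in the proof of Proposition \ref{prop:uncountability-Loch-Ness}: the Birman exact sequence $1 \to \pi_1(L) \to \Map(L') \to \Map(L) \to 1$ restricts to a sequence of Torelli groups with countable kernel (the kernel being contained in the commutator subgroup of $\pi_1(L)$), and Lemma \ref{lem:countable-kernel-injective} then allows the uncountability result for $\mathcal{T}(L')$ to descend to $\mathcal{T}(L)$ after discarding countably many indices in $\cF$. The main obstacle will be the Torelli analogue of the divisibility step in Corollary \ref{cor:embedding-of-Q}: the tail $\prod_{i=1}^{r_n - 1} b_i^{a_i!}$ must be shown to vanish in $\mathcal{T}(S)^{ab}$ despite the non-perfectness of Torelli groups of finite-type surfaces --- the direct appeal to \cite{Birman1970, Powell78} used for $\overline{\PMap_c(S)}$ is no longer available. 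One must instead argue that the image in $\mathcal{T}(S)^{ab}$ of any compactly-supported tail is absorbed into the infinite-tail contribution by a careful choice of the exponents $\{a_i\}$, exploiting that the Johnson-type obstructions to perfectness stabilise as the supporting subsurface is enlarged.
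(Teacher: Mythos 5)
Your opening step (the inclusion $\mathcal{T}(S) \subseteq \PMapcbar{S}$ via the handle-shift decomposition of \cite[Corollary 6]{APV20}) matches the paper. But the rest of the proposal takes a much harder road than necessary, and the hard road has a genuine gap. The curves $\gamma_i$ in an escaping sequence are by definition \emph{end-separating}, hence separating, hence null-homologous; so each single Dehn twist $T_{\gamma_i}$ already acts trivially on $H_1(S)$ and lies in $\mathcal{T}(S)$ (and in the low-genus case the commutators $T^2_\alpha T^2_\beta T^{-2}_\alpha T^{-2}_\beta$ of twists about separating curves from Remark \ref{rmk:closure-of-compactly-supported-mcg} do too). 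There is no need for bounding-pair maps. Consequently the map $\bigoplus_{\fc}\bZ \to \PMapcbar{S}$ of Corollary \ref{cor:closure-of-compactly-supported-mcg} already factors through $\mathcal{T}(S)$, and since the composite $H_*\bigl(\bigoplus_{\fc}\bZ\bigr) \to H_*(\mathcal{T}(S)) \to H_*(\PMapcbar{S})$ is known to be injective, so is the first map. This is the paper's entire argument: one never needs to say anything about $\mathcal{T}(S)^{ab}$ itself, because injectivity into $H_*(\mathcal{T}(S))$ comes for free from injectivity of the composite into $H_*(\PMapcbar{S})$.

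By contrast, your route requires (i) re-proving Domat's Theorem \ref{thm:Domat} and Corollaries \ref{cor:embedding-of-Q}--\ref{cor:embedding-of-QQ} for infinite products of bounding-pair maps, which you assert rather than prove, and (ii) an injection $\bigoplus_{\cF}\bQ \hookrightarrow \mathcal{T}(S)^{ab}$, for which you correctly identify that the divisibility step collapses: the vanishing of the compact tail $\prod_{i=1}^{r_n-1} b_i^{a_i!}$ in the abelianisation was deduced from perfectness of mapping class groups of compact genus-$\geq 3$ surfaces, and Torelli groups of finite-type surfaces are very far from perfect (the Johnson homomorphism obstructs exactly this). Your proposed repair --- that the tail is ``absorbed into the infinite-tail contribution by a careful choice of exponents'' because ``Johnson-type obstructions stabilise'' --- is not an argument; it is precisely the unresolved difficulty. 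As written, the proposal therefore does not establish the $\bQ$-divisibility needed to invoke Lemma \ref{lem:lift-uncount} with $G = \mathcal{T}(S)$, whereas the paper's formulation sidesteps the issue entirely by applying that lemma to $G = \PMapcbar{S}$ and only routing the $\bigoplus_{\fc}\bZ$ (not the $\bigoplus_{\fc}\bQ$) through the Torelli group.
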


\begin{proof}
By Corollary \ref{cor:closure-of-compactly-supported-mcg}, there is an embedding
\begin{equation}
\label{eq:embedding-of-Zc}
\textstyle\bigoplus_{\fc} \bZ \longhookrightarrow \overline{\PMap_c(S)}
\end{equation}
that induces on homology an embedding of $\Lambda^*\bigl(\bigoplus_{\fc} \bZ\bigr)$ into $H_*(\overline{\PMap_c(S)})$. It will therefore suffice to show that \eqref{eq:embedding-of-Zc} factors through the Torelli group $\mathcal{T}(S)$.

We first note that the Torelli group is contained in $\overline{\PMap_c(S)} \subset \Map(S)$: it clearly lies in $\PMap(S)$ since any non-trivial action on the space of ends of $S$ implies a non-trivial action on $H_1(S)$; then the fact that it lies in $\overline{\PMap_c(S)}$ follows from \cite[Corollary~6]{APV20}, which decomposes $\PMap(S)$ as a semi-direct product of $\overline{\PMap_c(S)}$ and a direct product of copies of $\bZ$ generated by \emph{handle shifts}, together with the fact that handle shifts act non-trivially on $H_1(S)$.

Finally, we just have to note that the elements of $\overline{\PMap_c(S)}$ used to define the homomorphism \eqref{eq:embedding-of-Zc} actually lie in $\mathcal{T}(S)$. When the genus of $S$ is at least $3$, these elements are infinite products of Dehn twists around (pairwise disjoint) \emph{separating} curves; hence they act trivially on $H_1(S)$. When the genus is at most $2$, we instead use infinite products of (pairwise disjointly-supported) pseudo-Anosov elements, as explained in Remark \ref{rmk:closure-of-compactly-supported-mcg}. These elements are of the form $T^2_\alpha T^2_\beta T^{-2}_\alpha T^{-2}_\beta$ for a pair of separating curves $\alpha,\beta$ that fill a finite-type subsurface of $S$, as explained in \cite[p.~715]{Domat2022}, and they also act trivially on $H_1(S)$.
\end{proof}

\begin{rem}
In degree one, $H_1(\mathcal{T}(S))$ contains an embedded copy of $\bigoplus_\fc \bQ$, by \cite[Theorem~9.1]{Domat2022}.
\end{rem}

\section{Descending along double branched covers}
\label{s:double-branched-covers}

In this section we generalise techniques of Malestein and Tao~\cite{MalesteinTao21} --- who proved uncountability of homology in degree $1$ for the mapping class group of $\bR^2 \smallsetminus \bN$ --- to higher degrees and to the more general class of surfaces from Theorem~\ref{athm:uncountable}, completing the proof of that theorem. To do this, we will need the notion of a \emph{ray surface} associated to a surface $\Sigma$.

\begin{defn}
\label{defn:ray-surface}
Let $\Sigma$ be any connected surface without boundary and write $\Sigma_1$ (respectively $\Sigma_2$) for the surface obtained by removing one (respectively two disjoint) open discs from $\Sigma$. The \emph{ray surface} $\cR(\Sigma)$ is the surface obtained by gluing together infinitely many copies of $\Sigma_2$ and ``capping off'' in one direction with a single copy of $\Sigma_1$. See the top half of Figure \ref{fig:double-covering} for an example where $\Sigma = T^2$ is the torus; thus $\cR(T^2)$ is the Loch Ness monster surface.
\end{defn}

\begin{figure}[tb]
    \centering
    \includegraphics[scale=0.8]{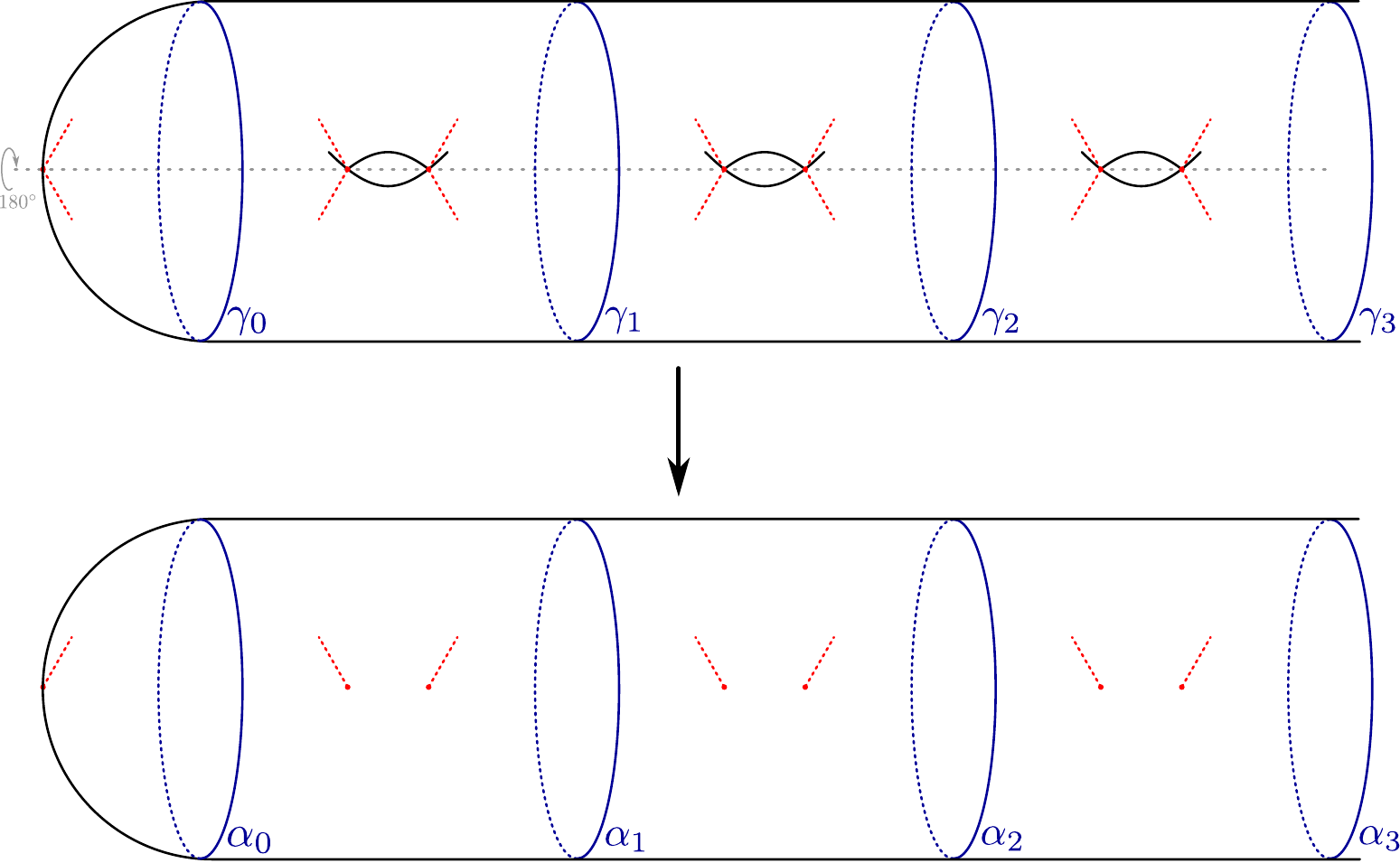}
    \caption{The branched double covering \eqref{eq:branched-covering}. After removing the subset marked in red (which includes the branch points), this restricts to the (genuine) double covering \eqref{eq:double-covering}. }
    \label{fig:double-covering}
\end{figure}

\begin{rem}
This is the same as the surface denoted by $\mathfrak{L}(\Sigma)$ in \cite{PalmerWu1} with its boundary capped off by a disc.
\end{rem}

Before proving Theorem~\ref{athm:uncountable} in general, we will prove it under certain stronger hypotheses on the surface $S$. Namely, we assume that the surface $S$ has genus $0$, empty boundary and that its space of ends is of the form $\Upsilon^+(E)$,\footnote{Recall that the notation $\Upsilon^+(-)$ was defined in Definition \ref{defn:Upsilon}.} where $E$ has a topologically distinguished point. This means that $S$ may be written as $\cR(\bS^2 \smallsetminus E)$, using the construction $\cR(-)$ of ray surfaces from Definition \ref{defn:ray-surface} above.

Denote by $L$ the Loch Ness monster surface and consider its branched double covering $L \to \bR^2$ depicted in Figure \ref{fig:double-covering}. This may also be written as
\begin{equation}
\label{eq:branched-covering}
L \cong \bS^2 \sharp \cR(T^2) \longrightarrow \bS^2 \sharp \cR(\bS^2) \cong \bR^2 .
\end{equation}
This decomposition corresponds to cutting along the curves depicted in the figure. Notice that there are exactly two branch points (of order $2$) in each copy of $\bS^2$ in $\cR(\bS^2)$ and one additional branch point in the copy of $\bS^2$ in the extra connected summand. Let us now choose once and for all a topologically distinguished point $x \in E$ (this exists by hypothesis) and embed pairwise disjoint copies of $E$ into $\bS^2 \sharp \cR(\bS^2)$ so that:
\begin{itemizeb}
\item each copy of $E$ lies entirely in one of the copies of $\bS^2$,
\item the point $x \in E$ is sent to a branch point of \eqref{eq:branched-covering},
\item each branch point of \eqref{eq:branched-covering} is in the image of one of the embeddings of $E$.
\end{itemizeb}
We denote by $X$ the complement of these embedded copies of $E$ and we denote by $Y \subset \bS^2 \sharp \cR(T^2)$ the pre-image of $X \subset \bS^2 \sharp \cR(\bS^2)$ under \eqref{eq:branched-covering}. Notice that:
\begin{align*}
Y &\cong (\bS^2 \smallsetminus V) \sharp \cR(T^2 \smallsetminus (V \sqcup V)) \\
X &\cong (\bS^2 \smallsetminus E) \sharp \cR(\bS^2 \smallsetminus (E \sqcup E)) \cong \cR(\bS^2 \smallsetminus E) \cong S,
\end{align*}
where $V$ denotes the wedge sum of two copies of $E$ at the basepoint $x$. Since we have in particular removed all branch points of the branched double covering, we obtain by restriction a (genuine) double covering
\begin{equation}
\label{eq:double-covering}
Y \longrightarrow X
\end{equation}
depicted in Figure \ref{fig:double-covering}.

We fix compatible basepoints on $X$ and $Y$ and denote by $H$ the index-$2$ subgroup of $\pi_1(X)$ corresponding to this double covering. We also write $\Map_*(X)$ and $\Map_*(Y)$ for the \emph{based} mapping class groups of $X$ and $Y$, given by isotopy classes of self-homeomorphisms that fix the basepoint.

\begin{lem}
\label{lem:preserving-subgroup}
The action of $\Homeo_*(X)$ on $\pi_1(X)$ preserves the subgroup $H$.
\end{lem}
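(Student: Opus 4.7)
The plan is to reduce the lemma to the invariance of a cohomology class. The subgroup $H \subset \pi_1(X,*)$ is the kernel of a homomorphism $\chi \colon \pi_1(X,*) \to \bZ/2$ classifying the double cover $Y \to X$, so the assertion $\varphi_*(H) = H$ is equivalent to $\chi \circ \varphi_* = \chi$ for every $\varphi \in \Homeo_*(X)$.

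To describe $\chi$ explicitly, I would use the fact that $X = \cR(\bS^2 \smallsetminus E)$ is a planar surface (genus $0$, no boundary), so $\pi_1(X,*)$ is generated by small loops $\gamma_e$ indexed by the ends $e \in \Ends(X)$. The homomorphism $\chi$ is therefore determined by its values on the $\gamma_e$. From the construction of $Y \to X$ as the restriction of the branched double cover $L \to \bR^2$, the loop $\gamma_e$ fails to lift to $Y$ precisely when $e$ corresponds to a branch point, that is, when $e$ lies in the subset $B \subset \Ends(X)$ consisting of the ends corresponding to the distinguished point $x$ inside each embedded copy of $E$.

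The next step is to show that $B$ is preserved by the induced action of $\Homeo_*(X)$ on $\Ends(X) = E\omega + 1$. I would prove this by identifying $B$ as a single equivalence class under the similarity relation of Definition \ref{defn:topologically-distinguished}. By Corollary \ref{cor:top-distinguished-compactification}, the point at infinity of $E\omega+1$ is topologically distinguished, hence not similar to any point of $E\omega$, and in particular not to any copy of $x$. For points of $E\omega = \bigsqcup_i E_i$, each summand $E_i$ is clopen in $E\omega+1$, so similarity of two points of $E_i$ inside $E\omega+1$ coincides with similarity inside $E$; the hypothesis that $x$ is topologically distinguished in $E$ then forces the similarity class of any copy of $x$ to be exactly $B$. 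Since every self-homeomorphism of a space preserves similarity classes, the induced action of $\Homeo_*(X)$ on $\Ends(X)$ preserves $B$ setwise.

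Finally, $\varphi_*(\gamma_e)$ is homotopic to an end-encircling loop around $\varphi(e)$, so $\chi(\varphi_*(\gamma_e)) = \chi(\gamma_{\varphi(e)})$, which by the previous step takes the same value as $\chi(\gamma_e)$. Since the $\gamma_e$ generate $\pi_1(X,*)$, this gives $\chi \circ \varphi_* = \chi$, as desired. I expect the main obstacle to be making the identification of $\chi$ with the indicator function of $B$ completely rigorous: this requires a careful discussion of $H^1(X;\bZ/2)$ for a planar surface with a possibly intricate end space, a precise definition of the end-encircling loops $\gamma_e$, and a justification that they suffice to detect the cover; alternatively one can extend the cover to a branched cover over a partial compactification of $X$ obtained by adjoining the ends in $B$, and argue that the $\varphi$-invariance of $B$ lets $\varphi$ lift through the cover.
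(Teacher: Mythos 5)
Your proposal follows essentially the same route as the paper: the paper also identifies $H$ as the kernel of the mod-$2$ ``total winding number around branch ends'' homomorphism (your $\chi$), observes via Corollary \ref{cor:top-distinguished-compactification} that every $\varphi \in \Homeo_*(X)$ fixes the end $\infty$, and shows that $\varphi$ permutes the set $B = \{x\}\omega$ of branch ends because it is exactly the similarity class of a copy of the topologically distinguished point $x$ --- which is precisely your argument for the invariance of $B$.

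The one point where your write-up as stated does not go through is the claim that $\pi_1(X,*)$ is generated by small loops $\gamma_e$ indexed by the individual ends $e$. This fails whenever $\Ends(X)$ is not discrete: a non-isolated end has no encircling loop, and in the setting of Theorem \ref{athm:uncountable} the distinguished point $x \in E$ (hence each branch end in $B$) need not be isolated. You correctly flag this as the main obstacle. The paper sidesteps it by never choosing a generating set: it defines the parity invariant directly on an arbitrary (simple) loop $\gamma$ as the sum of its winding numbers around the branch ends --- a finite sum, since $\gamma$ cuts off a clopen set of ends meeting only finitely many copies of $E$, each of which contains exactly one point of $B$ --- and then checks that $\varphi$ preserves this parity because it fixes $\infty$ and permutes $B$. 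Replacing your generator computation by this formulation (or by your suggested alternative of lifting $\varphi$ through the branched cover over the partial compactification adjoining $B$) closes the gap and recovers the paper's proof.
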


\begin{proof}
We first describe the subgroup $H \subset \pi_1(X)$ intrinsically. A based loop $\gamma$ in $X$ lies in $H$ if and only if its lift to $Y$ is a closed loop. This occurs if and only if the sum of its winding numbers around all branch points of the branched double covering \eqref{eq:branched-covering} is even. We therefore have to show that \emph{if} the sum of these winding numbers is even for $\gamma$, then the same is true for $\varphi \circ \gamma$, where $\varphi$ is any based self-homeomorphism of $X$.

A subtle point here is the meaning of \emph{winding number} (which we only need to define mod $2$): a simple loop in the surface $X$ has winding number $\pm 1$ around an end $e \neq \infty$ if it separates $X$ into two pieces, one containing $e$ and the other containing the end $\infty$. Here $\infty$ denotes the end corresponding to going off to infinity to the right in Figure \ref{fig:double-covering}. More precisely, recall that the end space of $X$ is the one-point compactification $\Upsilon^+(E) = E\omega + 1$ of a countably infinite disjoint union of copies of $E$ and $\infty$ denotes the point at infinity of this one-point compactification. By Corollary~\ref{cor:top-distinguished-compactification} and our assumption that $E$ has a topologically distinguished point, the point $\infty \in E\omega + 1$ is also topologically distinguished. Thus any self-homeomorphism $\varphi$ of $X$ fixes $\infty$, meaning that the notion of ``winding number'' is preserved by $\varphi$.

Let us now show that if the sum of the winding numbers of $\gamma$ around all branch points of $X$ is even, then the same is true for $\varphi \circ \gamma$. The end space $E\omega + 1$ of $X$ has a topologically distinguished subset $\{ x \}\omega$ given by the copy of the topologically distinguished point $x$ in each copy of $E$. But this is precisely the set of branch point of the branched double covering \eqref{eq:branched-covering}. Thus the self-homeomorphism $\varphi$ must send each end of $X$ corresponding to a branch point to another end of $X$ corresponding to a branch point. Its effect on winding numbers around branch points is therefore simply to permute them; so in particular their \emph{sum} is preserved. Hence if the sum of winding numbers around branch points is even for $\gamma$, then the sum of winding numbers around branch points will also be even for $\varphi \circ \gamma$.
\end{proof}

\begin{rem}
The proof of Lemma \ref{lem:preserving-subgroup} is where our assumption that the space $E$ has a topologically distinguished point is used decisively. The lemma would be false without this assumption. See also Remark \ref{rmk:counterexample}.
\end{rem}

We may now complete the proof of Theorem \ref{athm:uncountable} under the stronger assumptions that we are currently making (we explain how to remove these assumptions at the end of this section).

\begin{proof}[Proof of Theorem \ref{athm:uncountable} under additional assumptions.]
It follows from Lemma \ref{lem:preserving-subgroup} that each based homeomorphism of $X$ lifts uniquely to a based homeomorphism of $Y$, giving us a continuous map $\Homeo_*(X) \to \Homeo_*(Y)$, which on $\pi_0$ induces
\begin{equation}
\label{eq:lifting-map}
\Map_*(X) \longrightarrow \Map_*(Y).
\end{equation}
Filling in all \emph{planar} ends of a surface is a functorial operation on the category of surfaces, so by filling in all planar ends of $Y$ we obtain a continuous map $\Homeo_*(Y) \to \Homeo_*(L)$ (see Proposition \ref{prop:extension-continuous}), which on $\pi_0$ induces
\begin{equation}
\label{eq:filling-in-map}
\Map_*(Y) \longrightarrow \Map_*(L).
\end{equation}
Composing \eqref{eq:lifting-map} and \eqref{eq:filling-in-map} with the forgetful map $\Map_*(L) \to \Map(L)$, we obtain
\begin{equation}
\label{eq:from-X-to-L}
\Map_*(X) \longrightarrow \Map(L).
\end{equation}
Let $\alpha_1,\alpha_2,\ldots$ be the collection of simple closed curves on $X$ depicted in Figure \ref{fig:double-covering}. Since $\gamma_i$ is a double covering of $\alpha_i$, we see that
\[
(T_{\alpha_i})^2 \longmapsto T_{\gamma_i}
\]
under \eqref{eq:from-X-to-L}. Now recall that in \S\ref{s:Loch-Ness} (see diagram \eqref{eq:induced-square-modified}) we factored the inclusion $\bigoplus_\fc \bZ \subset \bigoplus_\fc \bQ$ through a map $\bigoplus_\fc \bZ \to \Map(L)$ that sends the generator $1 \in \bZ$ of each summand to a certain infinite product of Dehn twists around the curves $\gamma_i$ from the top of Figure \ref{fig:double-covering}. Replacing each $T_{\gamma_i}$ with $(T_{\alpha_i})^2$ in this infinite product, we obtain a map $\bigoplus_\fc \bZ \to \Map_*(X)$ making the following triangle commute:
\begin{equation}
\label{eq:commutative-triangle}
\begin{tikzcd}
& \displaystyle\bigoplus_{\fc} \bZ \ar[dl] \ar[dr] & \\
\Map_*(X) \ar[rr,"{\eqref{eq:from-X-to-L}}"] && \Map(L),
\end{tikzcd}
\end{equation}
where the right-hand diagonal map is part of a factorisation $\bigoplus_\fc \bZ \to \Map(L) \to \bigoplus_\fc \bQ$ of the standard inclusion. We have therefore shown that the standard inclusion of $\bigoplus_\fc \bZ$ into $\bigoplus_\fc \bQ$ also factors through $\Map_*(X)$. Now consider the diagram
\begin{equation}
\label{eq:factoring-through-Map-star}
\begin{tikzcd}
\bigoplus_\fc \bZ \ar[r] \ar[dr] & \Map_*(X) \ar[r,"\varphi"] \ar[d,two heads] & \bigoplus_\fc \bQ \\
& \Map(X), &
\end{tikzcd}
\end{equation}
where the middle vertical map forgets the basepoint. This is part of the Birman exact sequence for $X$, and its kernel is $\pi_1(X)$, which is in particular countable. Let us denote this kernel by $K$ and consider its image $\varphi(K) \subset \bigoplus_\fc \bQ$. Since $\varphi(K)$ is countable and each of its elements has only finitely many non-zero coordinates in $\bigoplus_\fc \bQ$ (because it is a direct \emph{sum}), it is contained in the subgroup of $\bigoplus_\fc \bQ$ given by the direct sum of only countably many of the copies of $\bQ$. If we take the quotient by this subgroup, the resulting group is again isomorphic to $\bigoplus_\fc \bQ$ and the homomorphism $\varphi$ now descends to $\Map(X)$. On the left-hand side of \eqref{eq:factoring-through-Map-star}, we may compose with the inclusion of the corresponding sub-direct-summand of $\bigoplus_\fc \bZ$ (which is again isomorphic to $\bigoplus_\fc \bZ$); this ensures that the composition across the top row of the following diagram is still the standard inclusion of $\bigoplus_\fc \bZ$ into $\bigoplus_\fc \bQ$:
\begin{equation}
\label{eq:factoring-through-Map}
\begin{tikzcd}
\bigoplus_\fc \bZ \ar[r] \ar[rrrr,bend left=15] & \bigoplus_\fc \bZ \ar[r] \ar[dr] & \Map_*(X) \ar[r,"\varphi"] \ar[d,two heads] & \bigoplus_\fc \bQ \ar[r] & \bigoplus_\fc \bQ \\
&& \Map(X) \ar[urr] &&
\end{tikzcd}
\end{equation}

Thus we have shown that the standard inclusion of $\bigoplus_\fc \bZ$ into $\bigoplus_\fc \bQ$ factors through $\Map(X)$. This standard inclusion induces an injection on homology in all degrees, by Lemma \ref{lem:exterior-Z-to-Q} and the fact that $H_*(A) = \Lambda^*(A)$ for torsion-free abelian groups $A$, so it follows that we have an injection
\[
\Lambda^*\Bigl(\bigoplus_\fc \bZ\Bigr) = H_*\Bigl(\bigoplus_\fc \bZ\Bigr) \longhookrightarrow H_*(\Map(X)) = H_*(\Map(S)).
\]
This completes the proof of Theorem \ref{athm:uncountable} under the additional assumptions on the surface $S$.
\end{proof}

We finish this section by showing how to modify the argument above to allow the more general surfaces $S$ considered in the theorem.

\begin{figure}
    \centering
    \includegraphics[scale=0.8]{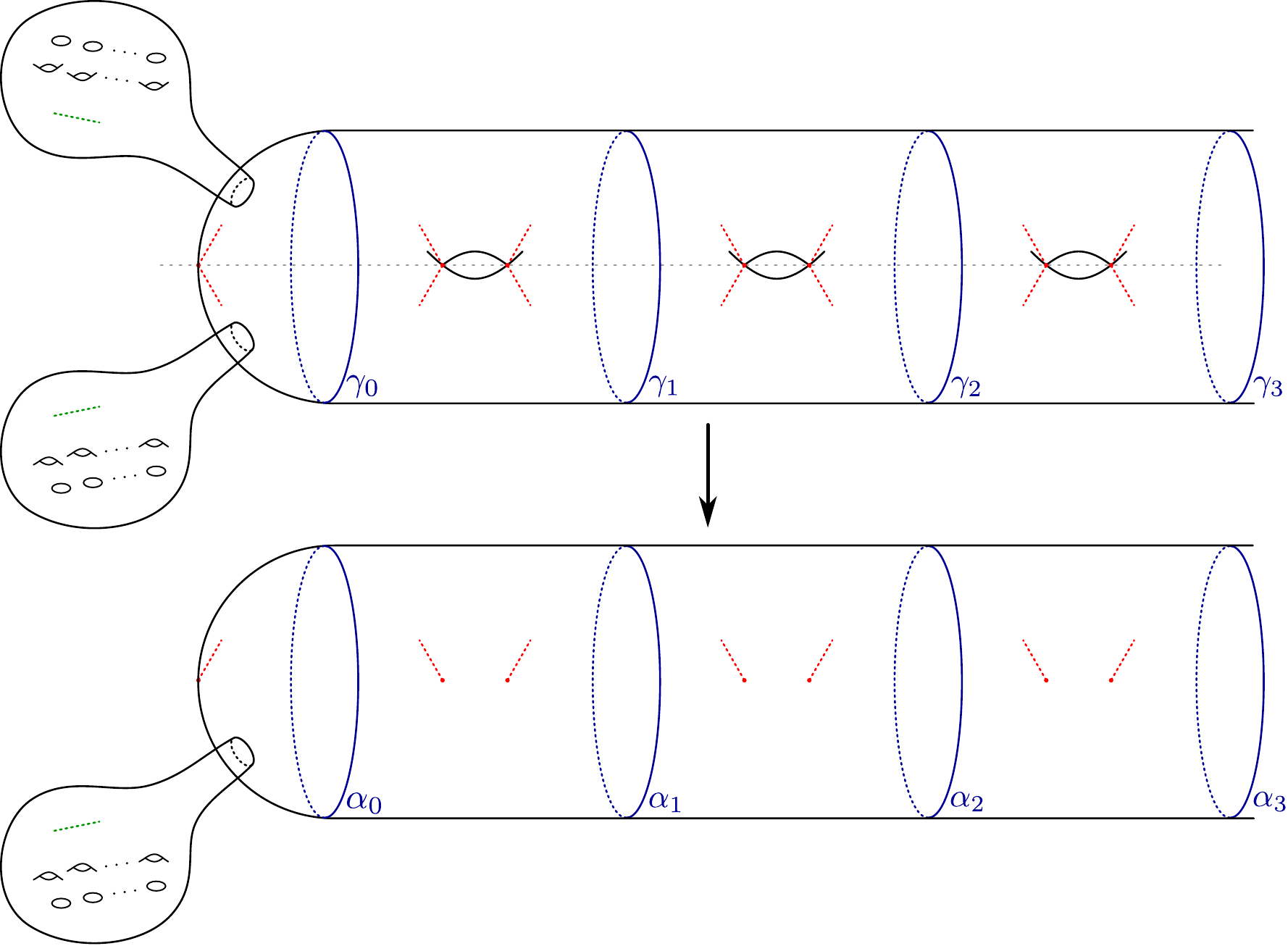}
    \caption{A modification of the branched double covering depicted in Figure \ref{fig:double-covering}.}
    \label{fig:double-covering-extended}
\end{figure}

\begin{proof}[Proof of Theorem \ref{athm:uncountable} in general.]
The proof follows exactly the same strategy as the proof in the special case above, so we just explain the steps that differ slightly.

In general, the surface $S$ is of the form pictured at the bottom of Figure \ref{fig:double-covering-extended}, where we have taken a connected sum of the surface considered previously with another surface of finite genus having finitely many boundary components, such that none of the points of its end space are similar to the topologically distinguished point $x \in E$. We may correspondingly modify the total space of the double covering by taking two connected sums with this surface (no new branch points are introduced).

Lemma \ref{lem:preserving-subgroup} generalises directly to this setting, giving us a homomorphism that lifts (based) mapping classes up the double covering. Filling in all planar ends upstairs, as well as the finitely many boundary components, we obtain (as before) the Loch Ness monster surface $L$. With these modifications, the rest of the proof is identical to the proof in the special case given above, using the constructions of \S\ref{s:Loch-Ness}.
\end{proof}

\begin{rem}
\label{rmk:counterexample}
It is essential to assume in Theorem \ref{athm:uncountable} that $E_2$ has a topologically distinguished point. Indeed, if we do not assume this, then the theorem is false. For example, without this assumption, the theorem would assert that the homology of $\Map(\bS^2 \smallsetminus \cC)$ is uncountable in all positive degrees, since $\Upsilon^+(\cC) \cong \cC$. However, the first and second homology groups of $\Map(\bS^2 \smallsetminus \cC)$ are known to be $0$ and $\bZ/2$ respectively~\cite{CC21}.
\end{rem}

\section{Torsion elements}
\label{s:torsion}

We prove in this section that, whenever $S$ has genus $2$, both $H_1(\PMap(S))$ and $H_1(\Map(S))$ contain an element of order $10$ that generates a direct summand. We first recall that, for \emph{compact} surfaces of genus $2$, the first homology of their mapping class groups is precisely $\bZ/10$. Denote by $S_{g,b}$ the connected, compact, orientable surface of genus $g$ with $b\geq 1$ boundary components. When $g=2$, we have the following.

\begin{thm}[{\cite[\S 5]{Korkmaz02}; see also \cite{Mumford1967}}]
\label{thm-fin-torhmlg}
For any $b\geq 0$, we have $H_1(\Map(S_{2,b})) \cong \bZ/10$, generated by $[T_\alpha]$, where $\alpha$ is any non-separating simple closed curve in $S_{2,b}$.
\end{thm}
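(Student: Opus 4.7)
My plan is to reduce to the closed case $S_{2,0}$ and then tackle that via the Birman--Hilden correspondence. The reduction from $S_{2,b}$ to $S_{2,b-1}$ uses the capping-off short exact sequence
\[
1 \to \langle T_\delta \rangle \to \Map(S_{2,b}) \to \Map(S_{2,b-1}, \ast) \to 1
\]
together with the Birman exact sequence
\[
1 \to \pi_1(S_{2,b-1}) \to \Map(S_{2,b-1}, \ast) \to \Map(S_{2,b-1}) \to 1.
\]
On abelianizations, the boundary twist $T_\delta$ is expressible by a chain relation as a power of the non-separating Dehn twist class $x = [T_\alpha]$, hence becomes trivial in the candidate quotient $\bZ/10$; and a point-push $\mathrm{Push}(\gamma) = T_{\gamma_+} T_{\gamma_-}^{-1}$ of a simple loop $\gamma$ becomes trivial in $H_1$ since $\gamma_\pm$ are parallel, hence conjugate, curves. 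This shows $H_1(\Map(S_{2,b})) \cong H_1(\Map(S_{2,0}))$ for all $b \geq 0$, with the same generator $[T_\alpha]$.

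For the closed case, the change-of-coordinates principle puts all non-separating simple closed curves in a single $\Map(S_{2,0})$-orbit, and by Dehn--Lickorish these twists generate the whole group; hence $H_1(\Map(S_{2,0}))$ is cyclic on $x = [T_\alpha]$. To identify the order I would invoke the Birman--Hilden theorem: the hyperelliptic involution $\iota$ is central in $\Map(S_{2,0})$ and
\[
\Map(S_{2,0})/\langle \iota \rangle \;\cong\; B_6(S^2)/Z(B_6(S^2)),
\]
where $B_6(S^2)$ is the spherical braid group on six strands. From its standard presentation (braid relations together with the sphere relation $\sigma_1 \sigma_2 \sigma_3 \sigma_4 \sigma_5^2 \sigma_4 \sigma_3 \sigma_2 \sigma_1 = 1$), one reads off $H_1(B_6(S^2)) \cong \bZ/10$ (all $\sigma_i$ become equal, and the sphere relation abelianizes to $10s=0$); the central element has word-length $30$ in the $\sigma_i$'s, so it vanishes in $\bZ/10$ and the quotient on the right also has $H_1 \cong \bZ/10$. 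This produces a surjection $H_1(\Map(S_{2,0})) \twoheadrightarrow \bZ/10$ sending $x$ to a generator. The chain relation $(T_{c_1}T_{c_2}T_{c_3}T_{c_4}T_{c_5})^6 = \iota$ then gives $[\iota] = 30 x$ in $H_1$, so completing the proof amounts to showing $[\iota] = 0$, i.e., that $x$ has order exactly $10$ rather than $20$.

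The main obstacle is precisely this upper bound $10x = 0$: the Birman--Hilden route gives only $20x = 0$ a priori, because the kernel $\bZ/2$ generated by $\iota$ could in principle contribute an extra factor. To rule this out one must exhibit $\iota$ explicitly as a product of commutators in $\Map(S_{2,0})$, or equivalently produce a lantern relation on $S_{2,0}$ whose abelianization forces $10 x = 0$. Korkmaz~\cite{Korkmaz02} does this by careful manipulations in an explicit finite presentation of $\Map(S_{2,0})$, while Mumford's original approach computes the Picard group of the moduli space $\mathcal{M}_2$. Once the closed case is settled, the claim that $[T_\alpha]$ is still a generator in the bordered case follows immediately from the reduction in the first paragraph, since boundary twists and point-push classes contribute nothing to $H_1$ once $\bZ/10$ is known to be the correct target.
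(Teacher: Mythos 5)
First, note that the paper does not prove this statement at all: it is quoted as a known result with a citation to Korkmaz (and Mumford), so there is no in-paper argument to compare yours against. Judged on its own terms, your outline follows the classical route, but it has a genuine gap exactly where you flag one: the relation $10x=0$ in $\Map(S_{2,0})$ is never established, and the repair you suggest does not work. There is no lantern relation embedded in a genus-$2$ surface with all seven curves non-separating and mutually conjugate --- that configuration first exists in genus $3$, which is precisely why $H_1$ vanishes for $g\geq 3$ but not for $g=2$. So ``produce a lantern relation on $S_{2,0}$'' cannot close the argument. The standard elementary completion is via the two chain relations: a $5$-chain $c_1,\dots,c_5$ in $S_{2,0}$ has regular neighbourhood with two boundary curves bounding discs, giving $(T_{c_1}\cdots T_{c_5})^6=1$ and hence $30x=0$; a $4$-chain has neighbourhood with one boundary curve bounding a disc, giving $(T_{c_1}\cdots T_{c_4})^{10}=1$ and hence $40x=0$; therefore $10x=0$. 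Combined with your (correct) lower bound from the surjection onto $\Map(S_{0,6})\cong B_6(S^2)/Z$, this pins the order at exactly $10$. Relatedly, your identity $(T_{c_1}\cdots T_{c_5})^6=\iota$ is wrong: that product is trivial in $\Map(S_{2,0})$, while the hyperelliptic involution is $\iota=T_{c_1}T_{c_2}T_{c_3}T_{c_4}T_{c_5}^2T_{c_4}T_{c_3}T_{c_2}T_{c_1}$, so $[\iota]=10x$ (not $30x$), and $\iota^2=1$ gives $20x=0$ --- again yielding $10x=0$ together with $30x=0$, with no need to exhibit $\iota$ as a product of commutators.

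Your reduction from $S_{2,b}$ to $S_{2,0}$ is also slightly circular as written: to conclude $H_1(\Map(S_{2,b}))\cong H_1(\Map(S_{2,0}))$ you must kill the boundary twist in $H_1(\Map(S_{2,b}))$ itself, not merely ``in the candidate quotient $\bZ/10$''. This too is fixed by running the chain relations inside the bordered surface: in $S_{2,1}$, for instance, the $5$-chain relation gives $30x=[T_\delta]$ and the $4$-chain relation gives $40x=[T_\delta]$, whence $10x=0$ and $[T_\delta]=30x=0$ simultaneously. The point-push classes dying in $H_1$ is fine as you state it. With these corrections the argument is the standard one; as it stands, the decisive upper bound is still being outsourced to the very references the theorem cites.
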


\begin{proof}[Proof of Theorem \ref{thm-tor10-hmlg}]
If $S$ has genus $2$, there is an embedding $S_{2,1} \subseteq S$. Also, filling in the ends of $S$ (all of which are planar since it has finite genus) to construct its Freudenthal compactification results in the compact surface $S_{2,b}$, where $b\geq 0$ is the number of boundary components of $S$. We therefore have homomorphisms
\begin{equation}
\label{eq:composition-genus-2}
\Map(S_{2,1}) \longrightarrow \PMap(S) \subseteq \Map(S) \longrightarrow \Map(S_{2,b}),
\end{equation}
where the first is given by extending homeomorphisms of $S_{2,1}$ by the identity on $S \smallsetminus S_{2,1}$ and the second is given by the unique extension of homeomorphisms to the Freudenthal compactification (see Proposition \ref{prop:extension-continuous}). Let $\alpha$ be a non-separating simple closed curve in $S_{2,1}$. By Theorem \ref{thm-fin-torhmlg}, the composition across \eqref{eq:composition-genus-2} induces a map $\bZ/10 \to \bZ/10$ on first homology. Moreover, it clearly sends $[T_\alpha]$ to itself, so it sends a generator of the first $\bZ/10$ to a generator of the second $\bZ/10$; thus it is an isomorphism. Since we have factored an isomorphism of $\bZ/10$ through $H_1(\PMap(S))$ and $H_1(\Map(S))$, it follows that these groups both contain $\bZ/10$ as a direct summand.
\end{proof}

We record here a related general fact that (for example) allows one to embed torsion elements of mapping class groups of compact surfaces into mapping class groups of surfaces of infinite type.

\begin{lem}
\label{lem:embedding}
Let $S$ and $S'$ be two surfaces with non-empty (compact) boundary and assume that $S'$ is planar. Then there are embeddings of direct summands
\begin{equation}
\label{eq:embeddings-of-direct-summands}
H_*(\PMap(S)) \longhookrightarrow H_*(\PMap(S \natural S')) \qquad\text{and}\qquad H_*(\Map(S)) \longhookrightarrow H_*(\Map(S \natural S')),
\end{equation}
where $- \natural -$ denotes the boundary connected sum along one chosen interval in the boundary of each of the two surfaces.
\end{lem}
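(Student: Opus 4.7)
The plan is to construct a group-theoretic retraction $r \colon \Map(S \natural S') \to \Map(S)$ of the natural inclusion $\iota$ defined by extending each $\varphi \in \Map(S)$ by the identity on $S'$, together with its restriction $\iota \colon \PMap(S) \to \PMap(S \natural S')$. Once such a retraction exists at the group level, functoriality of homology gives $r_* \iota_* = \mathrm{id}$, yielding the claimed embeddings of direct summands in both cases.

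To construct $r$, I would exploit the planarity of $S'$ in two stages. Since $\partial S'$ is compact, it has finitely many components, exactly one of which (call it $\partial_0 S'$) contains the gluing arc $\alpha$. Capping off the $k := |\pi_0(\partial S')| - 1$ other components of $\partial(S \natural S')$ by discs defines a homomorphism $\Map(S \natural S') \to \Map(S \natural \hat{S}')$, where $\hat{S}'$ is a planar surface with one boundary component and end space $\Ends(S')$; this is well-defined because homeomorphisms in $\Map$ fix $\partial$ pointwise and so extend by the identity across each capping disc. Next, since $\hat{S}'$ is planar with a single boundary component, its Freudenthal compactification $\overline{\hat{S}'}$ is homeomorphic as a topological space to a closed disc (with $\Ends(S')$ sitting as interior points), and hence $\overline{S \natural \hat{S}'} \cong \overline{S} \natural D^2 = \overline{S}$. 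Extension to the Freudenthal compactification (Proposition \ref{prop:extension-continuous}) then yields a homomorphism $\Map(S \natural \hat{S}') \to \Map(\overline{S})$.

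When $S$ is compact, $\overline{S} = S$, so composing the two homomorphisms gives the required retraction directly: for $\varphi \in \Map(S)$, the extension $\iota(\varphi)$ is the identity on $\overline{\hat{S}'} \cong D^2$, which is absorbed by the identification $S \natural D^2 = S$, confirming $r \circ \iota = \mathrm{id}$. The $\PMap$ version for arbitrary $S$ is obtained by the same strategy, using only a partial Freudenthal compactification (adjoining only the ends coming from $S'$); this is functorial on $\PMap$ because pure mapping classes fix each end of $S'$ individually, so a well-defined homomorphism $\PMap(S \natural \hat{S}') \to \PMap(S)$ results and again splits $\iota$.

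The main obstacle is the $\Map$ version for general $S$ with planar ends that are topologically similar to ends of $S'$: such a mapping class may permute ends across the ``boundary'' between $S$ and $S'$ in $S \natural S'$, making ``compactify only the ends coming from $S'$'' ambiguous. I would address this by exploiting the fact that $\Map$ fixes boundary components pointwise, so the unique component of $\partial(S \natural \hat{S}')$ containing the gluing arc $\alpha$ is pointwise fixed and serves as a canonical anchor; a change-of-coordinates argument leveraging this anchor and the classification of properly embedded separating arcs should then allow one to canonically isotope any mapping class representative to preserve $\hat{S}'$ setwise, so that restriction to the $S$-side yields the retraction $r$ with $r \circ \iota = \mathrm{id}$.
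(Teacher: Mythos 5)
Your construction is essentially the paper's own: include $\Map(S)$ by extending homeomorphisms by the identity on $S'$, then undo $S'$ by capping its superfluous boundary circles with discs and filling in its (planar) ends via Proposition \ref{prop:extension-continuous}, and finally absorb the resulting disc $\overline{\hat{S}'}\cong D^2$ into a collar neighbourhood of $\partial S$. (The paper performs the two "undoing" steps in the opposite order --- first fill in the ends of $S'$ to obtain $S \natural S_{0,b}$, then cap the extra boundary circles --- but this is immaterial.) Your argument is complete for the $\PMap$ statement, and for the $\Map$ statement whenever $S$ has no planar ends similar to ends of $S'$ --- in particular whenever $S$ is compact, which is the case needed for the paper's torsion application.

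The patch you propose for the remaining case of $\Map$ does not work. The permutation that a mapping class of $S \natural \hat{S}'$ induces on the end space is an isotopy invariant (it is the restriction to $\Ends(S\natural \hat{S}')$ of the unique extension to the Freudenthal compactification), so if a mapping class sends an end of $\hat{S}'$ to an end of $S$, then \emph{no} representative of it preserves $\hat{S}'$ setwise; no change-of-coordinates or anchoring argument along the gluing arc can repair this, because the obstruction lives at the level of ends, not of arcs. In other words, "fill in only the ends coming from $S'$" is simply not equivariant under the full mapping class group when $S$ has planar ends interchangeable with those of $S'$. You should be aware that the paper's written proof glosses over exactly the same point: it produces $\Map(S\natural S')\to\Map(S\natural S_{0,b})$ by "the unique extension to the Freudenthal compactification" and then declares the $\Map$ case "identical" to the $\PMap$ case, which is only literally justified when every homeomorphism of $S\natural S'$ preserves the set of ends contributed by $S'$ (automatic for $\PMap$, or when $S$ has no planar ends). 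So you have correctly isolated the one genuinely delicate step, but your proposed resolution of it is not viable as stated.
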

\begin{proof}
Since $S'$ is planar, it must be of the form $S' = S_{0,b} \smallsetminus E$, where $b\geq 1$ is the number of its boundary components and $E$ is its space of ends. We therefore have homomorphisms
\begin{equation}
\label{eq:composition-boundary-connected-sum}
\PMap(S) \longrightarrow \PMap(S \natural S') \longrightarrow \PMap(S \natural S_{0,b}) \longrightarrow \PMap(S \natural S_{0,1}) \cong \PMap(S),
\end{equation}
where the first is given by extending homeomorphisms by the identity on $S'$, the second is given by the unique extension of homeomorphisms to the Freudenthal compactification (see Proposition \ref{prop:extension-continuous}) and the third is given by filling in all boundary components of $S_{0,b}$ with discs, except the one along which we have taken the boundary connected sum, and extending homeomorphisms by the identity on these new discs. The isomorphism on the right-hand side is induced by a homeomorphism $S \natural S_{0,1} \cong S$ given by pushing the disc $S_{0,1}$ into a collar neighbourhood of the boundary of $S$. The composition across \eqref{eq:composition-boundary-connected-sum} is given by extending homeomorphisms of $S$ by the identity on $S_{0,1}$ and then conjugating by the homeomorphism $S \natural S_{0,1} \cong S$. This is clearly isotopic to the identity, so, applying $H_*$, we have factored the identity map of $H_*(\PMap(S))$ through $H_*(\PMap(S \natural S'))$, which provides the first embedding of \eqref{eq:embeddings-of-direct-summands}. The second embedding follows by an identical argument, replacing $\PMap(-)$ with $\Map(-)$ everywhere.
\end{proof}

\section{Some open problems}
\label{s:open-problems}

In this section we propose some open questions, in addition to Questions \ref{ques-countable}, \ref{ques-torsion-infinite-support} and \ref{ques-torsion-uncountable} discussed in the introduction. We divide them into \S\ref{ss:open-homology} on homology and \S\ref{ss:open-cohomology} on cohomology.

\subsection{Homology.}
\label{ss:open-homology}

So far, our calculations suggest the answer to the following question could be positive.

\begin{ques}
Let $S$ be an infinite-type surface. Suppose that, for some $i\geq 1$, the group $H_i(\Map(S))$ is countable. Is $H_i(\Map(S))$ finitely generated for all $i$?
\end{ques}

This would imply a dichotomy between those $S$ for which $H_i(\Map(S))$ is finitely generated for all $i\geq 1$ and those $S$ for which $H_i(\Map(S))$ is uncountable for all $i\geq 1$.

\begin{ques}
Let $S_{g,1}$ be the connected, compact, orientable surface of genus $g$ and with one boundary component. Does the forgetful map $\Map(S_{g,1} \smallsetminus \CC) \to \Map(S_{g,1})$ induce isomorphisms on homology in all degrees?
\end{ques}

\begin{rem}
When $g=0$, a positive answer follows from \cite[Theorem~B]{PalmerWu1}. The answer in degree one (and for any $g$) has been proven to be positive in \cite[Theorem~2.3]{CalegariChen2022}. On the other hand, the answer would be negative if we considered the sphere instead of $S_{g,1}$, since $H_2(\Map(\bS^2 \smallsetminus \cC)) \cong \bZ/2$ by \cite[Theorem A.2]{CC21}. It would also be negative if we took the plane instead of $S_{g,1}$, since $H_i(\Map(\bR^2 \smallsetminus \cC)) \cong \bZ$ for all even $i$ by \cite[Theorem~A]{PalmerWu1}.
\end{rem}

By \cite[Theorem~C]{PalmerWu1}, the mapping class groups of $1$-holed binary tree surfaces are acyclic. One may wonder whether these are the \emph{only} acyclic mapping class groups of infinite-type surfaces with connected boundary:

\begin{ques}
Let $S$ be an infinite-type surface with a single boundary component and suppose that its mapping class group $\Map(S)$ is acyclic. Is $S$ necessarily  a $1$-holed binary tree surface?
\end{ques}

\subsection{Cohomology.}
\label{ss:open-cohomology}

Most of the results of this paper may be summarised as follows. For any infinite-type surface $S$, the natural inclusion $\bigoplus_\fc \bZ \subset \bigoplus_\fc \bQ$ factors as
\begin{equation}
\label{eq:factorisation-through-PMap}
\bigoplus_\fc \bZ \longrightarrow \mathcal{T}(S) \subseteq \PMapcbar{S} \longrightarrow \bigoplus_\fc \bQ
\end{equation}
and similarly for the \emph{full} mapping class group $\Map(S)$ if $S$ satisfies the conditions of Theorem \ref{athm:uncountable} or if it is the Loch Ness monster surface (Proposition \ref{prop:uncountability-Loch-Ness}). Our results about integral homology then follow from the fact that the natural inclusion $\bigoplus_\fc \bZ \subset \bigoplus_\fc \bQ$ induces an injective homomorphism of exterior algebras $\Lambda^*(\bigoplus_\fc \bZ) \subset \Lambda^*(\bigoplus_\fc \bQ)$ on homology (Lemma \ref{lem:exterior-Z-to-Q}). It is therefore natural to consider also the effect of the factorisation \eqref{eq:factorisation-through-PMap} on integral \emph{cohomology}. However, this factorisation does not tell us anything about cohomology, since the composition across \eqref{eq:factorisation-through-PMap} induces the zero map on cohomology:

\begin{lem}
For each $i\geq 1$, we have:
\begin{align*}
H^i\Bigl( \bigoplus_\fc \bZ \Bigr) &\cong \prod_\fc \bZ \\
H^i\Bigl( \bigoplus_\fc \bQ \Bigr) &\cong \begin{cases}
0 & \text{if } i=1 \\
\bigoplus_{2^\fc} \bQ & \text{if } i\geq 2.
\end{cases}
\end{align*}
In particular, the inclusion $\bigoplus_\fc \bZ \subset \bigoplus_\fc \bQ$ induces the zero map on $H^i$.
\end{lem}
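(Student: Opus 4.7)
The plan is to invoke the Universal Coefficient Theorem for group cohomology, which gives a (split) short exact sequence
\[
0 \to \mathrm{Ext}^1(H_{i-1}(A), \bZ) \to H^i(A; \bZ) \to \mathrm{Hom}(H_i(A), \bZ) \to 0
\]
for any abelian group $A$, combined with the identification $H_*(A) \cong \Lambda^*(A)$ for torsion-free $A$ already invoked in Lemma \ref{lem:lift-uncount}. For $i=1$ the computation is immediate: $H^1(A; \bZ) = \mathrm{Hom}(A, \bZ)$, so the first formula follows from $\mathrm{Hom}(\bigoplus_\fc \bZ, \bZ) = \prod_\fc \bZ$, and the second from $\mathrm{Hom}(\bQ, \bZ) = 0$, which also makes the induced map on $H^1$ vanish trivially.

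For $i \geq 2$ I would first note the cardinal arithmetic fact $\binom{\fc}{i} = \fc$ for every finite $i \geq 1$, so that $\Lambda^i(\bigoplus_\fc \bZ) \cong \bigoplus_\fc \bZ$ and $\Lambda^i_\bZ(\bigoplus_\fc \bQ) = \Lambda^i_\bQ(\bigoplus_\fc \bQ) \cong \bigoplus_\fc \bQ$. In the $\bZ$ case the Ext term in the UCT vanishes (the target $H_{i-1}$ is free abelian), leaving $H^i(\bigoplus_\fc \bZ; \bZ) = \mathrm{Hom}(\bigoplus_\fc \bZ, \bZ) = \prod_\fc \bZ$. In the $\bQ$ case the Hom term vanishes and one is left with $H^i(\bigoplus_\fc \bQ; \bZ) = \mathrm{Ext}^1(\bigoplus_\fc \bQ, \bZ) = \prod_\fc \mathrm{Ext}^1(\bQ, \bZ)$.

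The main technical step is therefore identifying $\mathrm{Ext}^1(\bQ, \bZ)$. Applying $\mathrm{Hom}(-, \bZ)$ to $0 \to \bZ \to \bQ \to \bQ/\bZ \to 0$ and using $\mathrm{Hom}(\bQ, \bZ) = 0 = \mathrm{Hom}(\bQ/\bZ, \bZ)$ and $\mathrm{Ext}^1(\bZ, \bZ) = 0$ produces a short exact sequence $0 \to \bZ \to \mathrm{Ext}^1(\bQ/\bZ, \bZ) \to \mathrm{Ext}^1(\bQ, \bZ) \to 0$. Since $\bQ/\bZ \cong \bigoplus_p \bZ(p^\infty)$ and $\mathrm{Ext}^1(\bZ(p^\infty), \bZ) \cong \bZ_p$, the middle term is the profinite completion $\hat{\bZ}$, hence $\mathrm{Ext}^1(\bQ, \bZ) \cong \hat{\bZ}/\bZ$. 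This is torsion-free and divisible (it is a $\bQ$-module by functoriality of $\mathrm{Ext}^1$ in its first argument) and of cardinality $\fc$, so abstractly it is $\bigoplus_\fc \bQ$. Taking the product over $\fc$ copies gives a $\bQ$-vector space of cardinality $\fc^\fc = 2^\fc$, which is therefore $\bigoplus_{2^\fc} \bQ$.

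Finally, the induced map $H^i(\bigoplus_\fc \bQ; \bZ) \to H^i(\bigoplus_\fc \bZ; \bZ)$ is zero for $i \geq 2$: its domain $\bigoplus_{2^\fc} \bQ$ is divisible, while its target $\prod_\fc \bZ$ is reduced, since any element $(x_\alpha) \in \prod_\fc \bZ$ divisible by every positive integer would satisfy $n \mid x_\alpha$ in $\bZ$ for all $n$ and $\alpha$, forcing $x = 0$. Thus every homomorphism from the domain to the target factors through $0$. The only genuine obstacle in this plan is the Ext computation combined with the cardinality bookkeeping $\fc^\fc = 2^\fc$; everything else is a direct invocation of UCT and the exterior-algebra description of $H_*$ of torsion-free abelian groups.
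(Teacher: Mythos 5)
Your proposal is correct and follows essentially the same route as the paper: the universal coefficient theorem combined with $H_*(A)\cong\Lambda^*(A)$ for torsion-free $A$, the identifications $\mathrm{Hom}(\bigoplus_\fc\bZ,\bZ)\cong\prod_\fc\bZ$ and $\mathrm{Hom}(\bQ,\bZ)=0$, the fact that $\mathrm{Ext}_\bZ(\bQ,\bZ)$ is a $\bQ$-vector space of dimension $\fc$, the cardinality count $\fc^\fc=2^\fc$, and the divisible-versus-reduced argument for the vanishing of the induced map. The only difference is that you derive $\mathrm{Ext}_\bZ(\bQ,\bZ)\cong\widehat{\bZ}/\bZ\cong\bigoplus_\fc\bQ$ by hand from $0\to\bZ\to\bQ\to\bQ/\bZ\to 0$, where the paper simply cites the literature for this fact.
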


\begin{proof}
The last statement follows from the two calculations, since the induced map on $H^i$ has a rational vector space as its domain, which is a divisible group. Its image must therefore also be divisible, but the only divisible subgroup of $\prod_\fc \bZ$ is the trivial group.

It therefore remains to check the two calculations. The first one follows from the fact that $H_i(\bigoplus_\fc \bZ) \cong \bigoplus_\fc \bZ$ for all $i\geq 1$, the universal coefficient theorem, the fact that $\mathrm{Hom}_\bZ(-,-)$ and $\mathrm{Ext}_\bZ(-,-)$ take direct sums to products in the first variable and $\mathrm{Hom}_\bZ(\bZ,\bZ) \cong \bZ$ and $\mathrm{Ext}_\bZ(\bZ,\bZ) = 0$.

For the second calculation, we again use the universal coefficient theorem, where this time we use the facts that $\mathrm{Hom}_\bZ(\bQ,\bZ) = 0$ and that $\mathrm{Ext}_\bZ(\bQ,\bZ)$ is a rational vector space of dimension $\fc$ (see for example \cite{Wiegold1969}). Thus for $i\geq 2$ we have $H^i(\bigoplus_\fc \bQ) \cong \prod_\fc (\bigoplus_\fc \bQ)$, which is a divisible and torsion-free abelian group, hence a rational vector space, of cardinality (and hence also dimension) $\fc^\fc = 2^\fc$.
\end{proof}

Since the composition across \eqref{eq:factorisation-through-PMap} induces the zero map on cohomology, we cannot deduce anything about $H^*(\PMapcbar{S})$ from this. However, we wonder whether the right-hand map of \eqref{eq:factorisation-through-PMap} is nevertheless injective on cohomology. If it is, it would positively answer the first part of the following question.

\begin{ques}
\label{ques:cohomology}
Let $S$ be an infinite-type surface and $i\geq 2$. Do the groups $H^i(\PMapcbar{S})$ or $H^i(\PMap(S))$ contain a rational vector space of dimension $2^\fc$?
\end{ques}

The second part of this question is motivated by the observation that, in the case when $S$ has infinitely many non-planar ends, the answer is yes. In fact, we have:

\begin{prop}
\label{prop:cohomology-PMap}
Let $S$ be a surface with infinitely many non-planar ends and let $i\geq 2$. Then there is an embedding
\[
\bigoplus_{2^\fc} \bQ \oplus \bigoplus_{2^\fc} \bQ/\bZ \longhookrightarrow H^i(\PMap(S)).
\]
\end{prop}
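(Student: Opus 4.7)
The plan is to exploit the semi-direct product decomposition $\PMap(S) \cong \PMapcbar{S} \rtimes \bZ^\bN$ from \cite[Corollary~6]{APV20}, which applies since $S$ has infinitely many non-planar ends. This exhibits $\bZ^\bN$ as a retract of $\PMap(S)$ via the quotient map $r \colon \PMap(S) \twoheadrightarrow \bZ^\bN$, so the induced map $r^* \colon H^*(\bZ^\bN;\bZ) \hookrightarrow H^*(\PMap(S);\bZ)$ is a split injection of graded abelian groups. It therefore suffices to embed $\bigoplus_{2^\fc} \bQ \oplus \bigoplus_{2^\fc} \bQ/\bZ$ into $H^i(\bZ^\bN;\bZ)$ for each $i \geq 2$.

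First I would use the universal coefficient theorem, which gives $H^i(\bZ^\bN;\bZ) \supseteq \mathrm{Ext}(H_{i-1}(\bZ^\bN),\bZ)$. Since $\bZ^\bN$ is torsion-free, $H_*(\bZ^\bN) \cong \Lambda^*(\bZ^\bN)$ by \cite[Theorem~V.6.4(ii)]{Brown1982}, and the abelian-group isomorphism $\bZ^\bN \cong \bZ^{i-2} \oplus \bZ^\bN$ combined with the K\"unneth formula identifies $\bZ^\bN$ as a direct summand of $H_{i-1}(\bZ^\bN)$. By additivity of $\mathrm{Ext}$, the task reduces to showing
\[
\mathrm{Ext}(\bZ^\bN, \bZ) \;\supseteq\; \bigoplus_{2^\fc} \bQ \;\oplus\; \bigoplus_{2^\fc} \bQ/\bZ.
\]

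To analyse $\mathrm{Ext}(\bZ^\bN,\bZ)$, I would apply $\mathrm{Hom}(\bZ^\bN, -)$ to $0 \to \bZ \to \bQ \to \bQ/\bZ \to 0$; using injectivity of $\bQ$ in $\mathrm{Ab}$, this presents $\mathrm{Ext}(\bZ^\bN,\bZ)$ as the cokernel of $\mathrm{Hom}(\bZ^\bN,\bQ) \to \mathrm{Hom}(\bZ^\bN,\bQ/\bZ)$, and in particular shows that $\mathrm{Ext}(\bZ^\bN,\bZ)$ is divisible. For the $p$-primary parts, apply $\mathrm{Hom}(\bZ^\bN,-)$ instead to $0 \to \bZ \xrightarrow{\cdot p} \bZ \to \bF_p \to 0$: combined with Specker's theorem $\mathrm{Hom}(\bZ^\bN,\bZ) \cong \bigoplus_\bN \bZ$, the $p$-torsion of $\mathrm{Ext}(\bZ^\bN,\bZ)$ identifies with the cokernel of $\bigoplus_\bN \bF_p \to \mathrm{Hom}_{\bF_p}(\bF_p^\bN,\bF_p)$. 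The target is the $\bF_p$-linear dual of an $\bF_p$-vector space of dimension $\fc$, hence itself has dimension $2^\fc$; quotienting by a countable subgroup preserves this dimension. Divisibility then forces the $p$-primary part of $\mathrm{Ext}(\bZ^\bN,\bZ)$ to be $\bigoplus_{2^\fc} \bZ(p^\infty)$, where $\bZ(p^\infty)$ denotes the Pr\"ufer $p$-group. Summing over all primes $p$ produces the required $\bigoplus_{2^\fc} \bQ/\bZ$ summand.

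The main obstacle is producing the $\bigoplus_{2^\fc} \bQ$ summand, equivalently showing that the torsion-free rank of $\mathrm{Ext}(\bZ^\bN,\bZ)$ equals $2^\fc$. To do this I would tensor the above cokernel presentation with $\bQ$, reducing the problem to a comparison of $\bQ$-dimensions between $\mathrm{Hom}(\bZ^\bN,\bQ/\bZ) \otimes \bQ$ and the image of the $2^\fc$-dimensional $\bQ$-vector space $\mathrm{Hom}(\bZ^\bN,\bQ)$, and then construct $2^\fc$ many $\bQ$-linearly independent non-torsion extensions of $\bZ^\bN$ by $\bZ$---for example via $p$-adic or ultrafilter-based constructions---whose classes witness this rank. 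This is the delicate step where the precise structure of the Baer--Specker group is essential.
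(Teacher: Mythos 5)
Your reduction is exactly the paper's: the split surjection $\PMap(S)\twoheadrightarrow\bZ^\bN$ from \cite[Corollary~6]{APV20}, the universal coefficient theorem, and the K\"unneth argument (Lemma \ref{lem:Baer-Specker}) together exhibit $\mathrm{Ext}_\bZ(\bZ^\bN,\bZ)$ as a direct summand of $H^i(\PMap(S))$. The only divergence is in the final step: the paper simply cites the theorem of Nunke that $\mathrm{Ext}_\bZ(\bZ^\bN,\bZ)\cong\bigoplus_{2^\fc}\bQ\oplus\bigoplus_{2^\fc}\bQ/\bZ$, whereas you set out to prove it. Your treatment of the torsion part is essentially complete and correct: divisibility of $\mathrm{Ext}_\bZ(\bZ^\bN,\bZ)$, together with the identification of its $p$-torsion with the cokernel of $\bigoplus_\bN\bF_p\to\mathrm{Hom}_{\bF_p}(\bF_p^\bN,\bF_p)$ via Specker's theorem, does force the $p$-primary component to be $\bigoplus_{2^\fc}\bZ(p^\infty)$ for every $p$, and hence yields the $\bigoplus_{2^\fc}\bQ/\bZ$ summand.

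The gap is where you yourself locate it: the torsion-free rank. Showing that $\mathrm{Ext}_\bZ(\bZ^\bN,\bZ)$ has $\bQ$-rank $2^\fc$ is the genuinely hard half of the computation, and your proposal only gestures at it (``construct $2^\fc$ many $\bQ$-linearly independent non-torsion extensions \ldots{} via $p$-adic or ultrafilter-based constructions''). A cardinality count cannot close this, since the torsion subgroup already accounts for all $2^\fc$ elements, and right-exactness of $-\otimes\bQ$ applied to your cokernel presentation only bounds the rank from above: one must actually exhibit $2^\fc$ independent non-torsion classes, and nothing in the proposal does so. As written, therefore, the $\bigoplus_{2^\fc}\bQ$ summand is not established. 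Either carry out that construction in detail, or do what the paper does and cite \cite[Theorem~5]{Nunke1961} (or \cite[Exercise~2 of \S 99]{Fuchs1973}); with that citation in place of your last paragraph, the rest of your argument is sound.
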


\begin{proof}
By \cite[Corollary 6]{APV20}, $\PMap(S)$ admits a split-surjection onto the Baer-Specker group $\bZ^\bN$, so $H^i(\bZ^\bN)$ is a summand of $H^i(\PMap(S))$. By the universal coefficient theorem, $H^i(\bZ^\bN)$ has a direct summand of the form $\mathrm{Ext}_\bZ(H_{i-1}(\bZ^\bN),\bZ)$ and we know by Lemma \ref{lem:Baer-Specker} that $H_{i-1}(\bZ^\bN)$ contains a direct summand isomorphic to $\bZ^\bN$. Putting this together, it follows that $H^i(\PMap(S))$ has a direct summand isomorphic to $\mathrm{Ext}_\bZ(\bZ^\bN,\bZ)$. This group is isomorphic to $\bigoplus_{2^\fc} \bQ \oplus \bigoplus_{2^\fc} \bQ/\bZ$, by \cite[Theorem~5]{Nunke1961} (see also \cite[Exercise~2 of \S 99]{Fuchs1973}).
\end{proof}

\renewcommand{\theHsection}{appendixsection.\thesection}
\renewcommand{\thesection}{\Alph{section}}
\setcounter{section}{0}

\section{Abelian groups}
\label{appendix:abelian-groups}

We collect here a few facts about abelian groups that are needed in our proofs. For a comprehensive treatment of the theory of abelian groups, we refer to \cite{Fuchs1970}.

Recall that an abelian group $A$ is called \emph{divisible} if for each element $a \in A$ and positive integer $n$, there is another element $b \in A$ such that $a = nb$. An abelian group $A$ is called \emph{injective} if for every injective homomorphism of abelian groups $\iota \colon B \to C$ and homomorphism $f \colon B \to A$, there is a homomorphism $g \colon C \to A$ such that $g \circ \iota = f$.
By \cite[Theorems~21.1 and 24.5]{Fuchs1970}, an abelian group is divisible if and only if it is injective. In particular:

\begin{lem}
\label{lem:divisible-injective}
Every injective homomorphism from a divisible abelian group to another abelian group admits a retraction.
\end{lem}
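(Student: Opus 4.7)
The plan is to invoke the cited equivalence between divisibility and injectivity (in the category of abelian groups), after which the statement reduces to the defining property of an injective object applied to the identity map. Concretely, let $A$ be a divisible abelian group and let $\iota \colon A \hookrightarrow B$ be an injective homomorphism into some abelian group $B$. By the already-quoted result \cite[Theorems~21.1 and 24.5]{Fuchs1970}, $A$ is injective as an abelian group.

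Now apply the defining property of injectivity to the datum consisting of the injective homomorphism $\iota \colon A \hookrightarrow B$ and the identity map $f = \mathrm{id}_A \colon A \to A$: this yields a homomorphism $r \colon B \to A$ such that $r \circ \iota = \mathrm{id}_A$. This is exactly the statement that $\iota$ admits a retraction, so the lemma is established.

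There is no genuine obstacle to overcome once the Fuchs equivalence is taken as input; the entire content of the lemma is packaged into that equivalence (which itself rests on a Zorn's-lemma/Baer-criterion argument), and the deduction above is a formal one-line consequence. I would present the proof in a single sentence in the final paper, with the reference to Fuchs doing all of the heavy lifting.
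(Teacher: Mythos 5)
Your proof is correct and is essentially identical to the one in the paper: both invoke the Fuchs equivalence of divisibility and injectivity, then apply the defining property of an injective object to $\iota \colon A \hookrightarrow B$ with $f = \mathrm{id}_A$ to produce the retraction.
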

\begin{proof}
Let $A$ be a divisible abelian group and let $\iota \colon A \to C$ be an injective homomorphism. Since $A$ is injective, taking $B=A$ and $f=\mathrm{id}$ above, we obtain a retraction of $\iota$.
\end{proof}

\begin{lem}
\label{lem:countable-kernel-injective}
Suppose that we have homomorphisms of abelian groups
\[
\begin{tikzcd}
\displaystyle\bigoplus_{\fc} \bQ \ar[r,"f"] & A \ar[r,"g"] & B
\end{tikzcd}
\]
where $f$ is injective and $g$ has countable kernel. Then, after restricting the direct sum on the left to a subcollection of the same cardinality, the composition $g \circ f$ is also injective.
\end{lem}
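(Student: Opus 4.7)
The plan is to take the kernel $K = \ker(g \circ f)$, observe that it is countable, and then show that its ``support'' in $\bigoplus_\fc \bQ$ involves only countably many indices; restricting to the complement of this support will provide the desired subcollection of cardinality $\fc$.

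More precisely, let $I$ denote the indexing set of cardinality $\fc$ on the left. First I would note that since $f$ is injective, $f$ restricts to an injection $K = \ker(g \circ f) \hookrightarrow \ker(g)$. Since $\ker(g)$ is countable by hypothesis, $K$ is countable as well. Next, every element of $\bigoplus_I \bQ$ has finite support (only finitely many nonzero coordinates), by the definition of the direct sum. Therefore the set
\[
J \;=\; \bigcup_{x \in K} \mathrm{supp}(x) \;\subseteq\; I
\]
is a countable union of finite sets, hence countable. Since $|I| = \fc$ and $|J| \leq \aleph_0 < \fc$, the complement $I' := I \setminus J$ still has cardinality $\fc$.

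Finally I would verify that the restriction of $g \circ f$ to the sub-direct-summand $\bigoplus_{I'} \bQ \subseteq \bigoplus_I \bQ$ is injective. Indeed, if $x \in \bigoplus_{I'} \bQ$ lies in the kernel of $g \circ f$, then on the one hand $x \in K$, so $\mathrm{supp}(x) \subseteq J$; on the other hand $\mathrm{supp}(x) \subseteq I' = I \setminus J$, so $\mathrm{supp}(x) = \varnothing$ and $x = 0$. Thus the subcollection indexed by $I'$, which has cardinality $\fc$, has the required property. There is no real obstacle here: the only subtle point is to remember that elements of a direct sum (as opposed to a direct product) have finite support, which is exactly what makes the countable union of supports remain countable.
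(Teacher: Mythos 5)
Your argument is correct and is essentially identical to the paper's own proof: both identify $K=\ker(g\circ f)=f^{-1}(\ker(g))$ as countable via injectivity of $f$, note that its elements have finite support so that $K$ lives in a countable sub-direct-sum, and remove those countably many summands. The explicit final verification you give is a nice touch but does not change the argument.
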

\begin{proof}
Consider the subgroup $K := \mathrm{ker}(g \circ f) = f^{-1}(\mathrm{ker}(g)) \subset \bigoplus_\fc \bQ$. Since $\mathrm{ker}(g)$ is countable and $f$ is injective, $K$ is a countable subgroup of $\bigoplus_\fc \bQ$. Each element of $K$ has only finitely many non-zero coordinates in the direct sum and $K$ has countably many elements; thus $K$ is contained in the sub-direct-sum given by countably many $\bQ$ summands. After removing these summands from the direct sum, the composition $g \circ f$ is injective.
\end{proof}

\begin{lem}
\label{lem:exterior-Z-to-Q}
For any set $I$, the canonical inclusion $\bigoplus_I \bZ \hookrightarrow \bigoplus_I \bQ$ induces an injective map of graded abelian groups
\begin{equation}
\label{eq:exterior-Z-to-Q}
\Lambda^*\Bigl(\bigoplus_I \bZ\Bigr) \longhookrightarrow \Lambda^*\Bigl(\bigoplus_I \bQ\Bigr).
\end{equation}
\end{lem}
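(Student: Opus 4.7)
The plan is to reduce the statement to an explicit computation in bases, using that $\bigoplus_I \bZ$ and $\bigoplus_I \bQ$ are respectively the free abelian group and the free $\bQ$-vector space on $I$.

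First I would fix a total order on $I$ (available by Zorn's lemma) and invoke the standard basis description of the exterior algebra of a free module to identify $\Lambda^n(\bigoplus_I \bZ)$ with the free abelian group on $n$-element subsets of $I$: the subset $\{i_1 < \cdots < i_n\}$ corresponds to the wedge $e_{i_1} \wedge \cdots \wedge e_{i_n}$, where $\{e_i\}_{i \in I}$ is the standard basis. The same argument applies over $\bQ$ after the observation that, since $\bQ \otimes_\bZ \bQ \cong \bQ$, the integral exterior algebra of a $\bQ$-vector space coincides with its rational exterior algebra; hence $\Lambda^n(\bigoplus_I \bQ) \cong \bigoplus_{\binom{I}{n}} \bQ$ as abelian groups.

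With both graded pieces identified in this way, the map induced by the canonical inclusion $\bigoplus_I \bZ \hookrightarrow \bigoplus_I \bQ$ sends each basis wedge on the left to the corresponding basis wedge on the right, so in each degree $n$ it coincides with the canonical inclusion $\bigoplus_{\binom{I}{n}} \bZ \hookrightarrow \bigoplus_{\binom{I}{n}} \bQ$, which is manifestly injective. No serious obstacle is anticipated: the only thing meriting a moment of care is the coincidence of the integral and rational exterior algebras of a $\bQ$-vector space, which follows from the $\bQ \otimes_\bZ \bQ \cong \bQ$ observation above. In the possibly infinite-rank case the basis description of $\Lambda^n$ can be verified either directly, or by writing the free module as the filtered colimit of its finitely generated free submodules and using that $\Lambda^n$ commutes with filtered colimits of modules.
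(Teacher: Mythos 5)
Your proposal is correct and is in substance the same argument as the paper's: both reduce to an explicit description of the exterior powers (the paper via the decomposition $\Lambda^*(\bigoplus_J A) \cong \bigotimes_J \Lambda^*(A)$ over finite subsets $J \subseteq I$ together with the rank-one computations $\Lambda^*(\bZ) \cong \bZ[0]\oplus\bZ[1]$ and $\Lambda^*(\bQ) \cong \bQ[0]\oplus\bQ[1]$; you via the standard basis of $\Lambda^n$ of a free module), and in both cases the induced map in degree $n$ becomes the evident inclusion of the free abelian group on $n$-element subsets of $I$ into the corresponding $\bQ$-vector space. The one place where you are too quick is precisely the step you flag as meriting care: the identification of the integral with the rational exterior algebra of a $\bQ$-vector space does \emph{not} follow merely from $\bQ \otimes_\bZ \bQ \cong \bQ$ --- that identifies the tensor algebras, but one must still check that the alternating relations generate the same submodule over $\bZ$ as over $\bQ$ (equivalently, that $\Lambda^n_\bZ$ of a $\bQ$-vector space is divisible). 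This is exactly the content of the paper's Lemma~\ref{lem:exterior-algebra-of-Z-and-Q}, whose only nontrivial case $\Lambda^2_\bZ(\bQ) = 0$ is proved there using Lagrange's four-square theorem; alternatively, one can observe that scaling by $1/m$ on $V$ induces an automorphism of $\Lambda^n_\bZ(V)$ inverse to multiplication by $m^n$, so that $\Lambda^n_\bZ(V)$ is uniquely divisible and the two universal properties coincide. With that point filled in, your argument is complete.
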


To prove this, we first recall the following basic calculation:

\begin{lem}
\label{lem:exterior-algebra-of-Z-and-Q}
$\Lambda^*(\bZ) \cong \bZ[0] \oplus \bZ[1]$ and $\Lambda^*(\bQ) \cong \bQ[0] \oplus \bQ[1]$.
\end{lem}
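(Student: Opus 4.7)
My plan is to reduce the lemma to the vanishing of $\Lambda^2$ in each case, using the observation that both $\bZ$ and $\bQ$ are \emph{locally cyclic} as $\bZ$-modules (every finitely generated subgroup is cyclic). The degree $0$ and degree $1$ parts of the exterior algebra $\Lambda^*(A)$ of any abelian group $A$ are, by definition, $\bZ$ and $A$ respectively, so the only real content is in showing that $\Lambda^n(\bZ)$ and $\Lambda^n(\bQ)$ vanish for all $n\geq 2$.

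First I would observe that the canonical surjection $A^{\otimes n} \twoheadrightarrow \Lambda^n(A)$ factors through $\Lambda^2(A) \otimes_\bZ A^{\otimes(n-2)}$ for every $n\geq 2$, since the antisymmetrizing relations used to form $\Lambda^n$ include all the relations used to form $\Lambda^2$ in the first two tensor slots. Hence a vanishing of $\Lambda^2(A)$ immediately forces $\Lambda^n(A) = 0$ for all $n\geq 2$, and so it is enough to handle the case $n=2$ for $A = \bZ$ and $A = \bQ$.

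Next I would verify that $\bZ$ and $\bQ$ are locally cyclic: this is immediate for $\bZ$, and for $\bQ$ it follows from the observation that any two fractions $p/q$ and $r/s$ both lie in the cyclic subgroup of $\bQ$ generated by $1/(qs)$. Given any two elements $a, b$ of a locally cyclic group $A$, we can write $a = mc$ and $b = nc$ for some $c \in A$ and integers $m,n$, so $a \wedge b = mn(c \wedge c) = 0$ in $\Lambda^2(A)$. Since this holds for all $a,b$, we conclude $\Lambda^2(\bZ) = \Lambda^2(\bQ) = 0$, and combining with the reduction above completes the argument.

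There is no real obstacle here: the lemma is an elementary fact about exterior algebras, and the only non-formal ingredient is the (straightforward) observation that $\bQ$ is locally cyclic as a $\bZ$-module.
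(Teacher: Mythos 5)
Your proof is correct, but it takes a genuinely different route from the paper's. Both arguments agree that the only content is the vanishing of $\Lambda^i(\bQ)$ for $i\geq 2$ (the case of $\bZ$ being standard), and both reduce this to a statement about which elements of $\bQ\otimes_\bZ\bQ\cong\bQ$ lie in the submodule of relations. The paper shows that this submodule, generated by the images $b^2$ of the elements $b\otimes b$, is all of $\bQ$ by invoking Lagrange's four-square theorem: writing $pq=a^2+b^2+c^2+d^2$ and dividing by $q^2$ exhibits $p/q$ as a sum of four rational squares. You instead show directly that every generator $a\otimes b$ already lies in the relations submodule, using that $\bQ$ is locally cyclic: writing $a=mc$, $b=nc$ gives $a\wedge b=mn(c\wedge c)=0$ by bilinearity. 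Your reduction of $\Lambda^n$ to $\Lambda^2$ via the factorisation of $A^{\otimes n}\twoheadrightarrow\Lambda^n(A)$ through $\Lambda^2(A)\otimes_\bZ A^{\otimes(n-2)}$ is also valid (the paper handles $i\geq 3$ by the equally easy observation that one may take $b=1$ in $b^2a_3\cdots a_i$). Your argument is more elementary, avoiding the number-theoretic input entirely, and it is more general: it shows $\Lambda^{\geq 2}(A)=0$ for any locally cyclic abelian group $A$, whereas the paper's computation is specific to $\bQ$. The paper's version has the minor charm of a concrete arithmetic identity, but nothing in the rest of the paper depends on that extra specificity.
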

\begin{proof}
The only non-obvious statement is that $\Lambda^i(\bQ) = 0$ for $i\geq 2$. To see this, first recall that
\begin{equation}
\label{eq:Q-tensor-Q}
\bQ \otimes_\bZ \bQ \otimes_\bZ \cdots \otimes_\bZ \bQ \cong \bQ
\end{equation}
via an isomorphism that sends $a_1 \otimes a_2 \otimes \cdots \otimes a_i \mapsto a_1 a_2 \cdots a_i$. The $\bZ$-module $\Lambda^i(\bQ)$ is the quotient of this tensor power by the sub-$\bZ$-module generated by all elements $a_1 \otimes a_2 \otimes \cdots \otimes a_i$ with $a_j = a_k$ for some $j\neq k$. Thus to prove that $\Lambda^i(\bQ) = 0$ we have to show that every rational number is a $\bZ$-linear combination of rational numbers of the form $b^2 a_3 \cdots a_i$. For $i\geq 3$ this is obvious, as we may take $b=1$. For $i=2$, consider a rational number $\frac{p}{q}$, where $p,q \in \bZ$ with $q\neq 0$. Lagrange's four-square theorem implies that we have $pq = a^2 + b^2 + c^2 + d^2$ for integers $a,b,c,d$. Dividing by $q^2$, we deduce that $\frac{p}{q}$ is a sum of four rational squares.
\end{proof}

\begin{proof}[Proof of Lemma \ref{lem:exterior-Z-to-Q}]
By \cite[\S V.6.2, V.6.3]{Brown1982}, for any abelian group $A$ we have
\begin{equation}
\label{eq:decomposition-as-colimit}
\Lambda^*\Bigl(\bigoplus_I A \Bigr) \cong \Lambda^*\Bigl(\underset{J \subseteq I}{\mathrm{colim}} \bigoplus_J A \Bigr) \cong \underset{J \subseteq I}{\mathrm{colim}}\; \Lambda^*\Bigl(\bigoplus_J A \Bigr) \cong \underset{J \subseteq I}{\mathrm{colim}} \bigotimes_J \Lambda^*(A),
\end{equation}
where the colimit is taken over finite subsets $J$ of $I$. For any finite set $J$, the canonical map
\[
\bigotimes_J \Lambda^*(\bZ) \longrightarrow \bigotimes_J \Lambda^*(\bQ)
\]
is injective by Lemma \ref{lem:exterior-algebra-of-Z-and-Q} and the natural isomorphisms \eqref{eq:Q-tensor-Q}. Thus \eqref{eq:exterior-Z-to-Q} is also injective since the colimit on the right-hand side of \eqref{eq:decomposition-as-colimit}, for $A=\bZ$ or $A=\bQ$, is taken over a direct system in which all maps are inclusions of direct summands.
\end{proof}

\section{Extending homeomorphisms to Freudenthal compactifications}
\label{appendix:Freudenthal-compactifications}

\begin{notation}
For a surface $S$, recall that we denote by $\overline{S}$ its \emph{Freudenthal compactification} (see \S\ref{s:inf-type}). We will write $\cP(S) = \Ends(S) \smallsetminus \Ends_{np}(S)$ for its space of \emph{planar} ends. We will also write $\widehat{S} \subseteq \overline{S}$ for the subspace $\overline{S} \smallsetminus \Ends_{np}(S)$ where we have removed all non-planar ends from $\overline{S}$. Equivalently, it is the subspace of $\overline{S}$ consisting of all of its locally Euclidean points: in other words it is the maximal subspace that is a surface. Intuitively, $\widehat{S}$ is the result of ``filling in'' all planar ends $\cP(S)$ of $S$.
\end{notation}

Since every homeomorphism of $S$ extends (necessarily uniquely) to $\overline{S}$ and every homeomorphism of $\overline{S}$ sends $\widehat{S}$ onto itself, we have well-defined injective functions:
\begin{equation}
\label{eq:extending-homeomorphisms}
\Homeo(S) \longrightarrow \Homeo(\overline{S}) \longrightarrow \Homeo(\widehat{S}).
\end{equation}

\begin{prop}
\label{prop:extension-continuous}
With respect to the compact-open topology, the left-hand function in \eqref{eq:extending-homeomorphisms} is a topological embedding and the right-hand function is a homeomorphism.
\end{prop}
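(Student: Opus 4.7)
The plan is to verify four properties: (i) injectivity of both maps (already noted in the excerpt); (ii) surjectivity of the right map (needed to promote it from a topological embedding to a homeomorphism); (iii) continuity of both maps in the ``restriction'' direction; and (iv) continuity of both maps in the ``extension'' direction. Item (iv) will be the main obstacle.

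Injectivity is recorded just before the proposition. For surjectivity of the right map, I would observe that $\overline{S}$ is canonically the Freudenthal compactification of the locally compact Hausdorff space $\widehat{S}$: each point of $\Ends_{np}(S)$, when removed from $\overline{S}$, becomes an end of $\widehat{S}$ in the inverse-limit sense, and this assignment is a bijection $\Ends_{np}(S) \cong \Ends(\widehat{S})$. By the functoriality of Freudenthal compactification along homeomorphisms, every $f \in \Homeo(\widehat{S})$ then extends uniquely to a self-homeomorphism of $\overline{S}$, giving surjectivity. For continuity of restriction (in both cases), a subbasic open set $\{g : g(K) \subseteq V\}$ of the target (with $K$ compact and $V$ open in $S$ or $\widehat{S}$) pulls back under restriction to $\{f \in \Homeo(\overline{S}) : f(K) \subseteq V\}$; since $S$ and $\widehat{S}$ are both open in $\overline{S}$, $V$ remains open and $K$ remains compact in $\overline{S}$, so the preimage is again subbasic open.

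For continuity in the extension direction, I would treat $X = S$ and $X = \widehat{S}$ uniformly. Fix $g_0 \in \Homeo(X)$ and a subbasic open $\{f : f(K) \subseteq V\}$ of $\Homeo(\overline{X})$ containing $\overline{g_0}$. The plan is to split $K = K_0 \sqcup E_0$ into the part $K_0 := K \cap X$ compact in $X$ and the part $E_0 := K \cap \Ends(X)$ compact in $\Ends(X)$. The condition $g(K_0) \subseteq V \cap X$ is already a subbasic open condition in $\Homeo(X)$. For the end part, for each $e \in E_0$ I would choose a basic neighbourhood $V_e = C_e \cup \{e' < C_e\} \subseteq V$ of $\overline{g_0}(e)$, where $C_e$ is a component of $X \setminus L_e$ for some compact $L_e \subset X$, and reduce to finitely many such by the compactness of $E_0$, covering $E_0$ by the open sets $\overline{g_0}^{-1}(V_e) \cap \Ends(X)$.

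The hard step is to translate each condition $\overline{g}(e) \in V_e$ into an open condition on $g$ in the compact-open topology on $\Homeo(X)$. By the inverse-limit definition of $\overline{g}$, this condition amounts to requiring that $g$ map the component of $X \setminus g^{-1}(L_e)$ containing $e$ into $C_e$, which is a condition about $g$ on a non-compact region. I plan to bridge the gap via continuity of inversion on $\Homeo(X)$ (valid since $X$ is locally compact Hausdorff): for $g$ close enough to $g_0$ on a suitable compact neighbourhood of $g_0^{-1}(L_e)$, the preimage $g^{-1}(L_e)$ stays close to $g_0^{-1}(L_e)$, so the relevant components of $X \setminus g^{-1}(L_e)$ are identified with those of $X \setminus g_0^{-1}(L_e)$, and the desired inclusion into $C_e$ follows. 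Combining the finitely many resulting open conditions yields the required open neighbourhood of $g_0$ in $\Homeo(X)$, and once both extension directions are shown continuous, we conclude that the left map is a topological embedding and the right map is a homeomorphism.
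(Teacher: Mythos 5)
There is a genuine gap at the very first step of your treatment of the extension direction: for a compact subset $K \subseteq \overline{X}$, the set $K_0 := K \cap X$ is in general \emph{not} compact in $X$. It is the complement in $K$ of the closed subset $K \cap \Ends(X)$, hence merely open in $K$; for instance $K$ could be the closure in $\overline{X}$ of a properly embedded ray converging to an end, in which case $K_0$ is the non-compact ray itself. Consequently ``$g(K_0) \subseteq V \cap X$'' is \emph{not} a subbasic open condition for the compact-open topology on $\Homeo(X)$, and your argument never controls the behaviour of $g$ on the part of $K \cap X$ that escapes to infinity. This is not a peripheral detail: the entire difficulty of the proposition is that the subbasic sets of the F-topology constrain homeomorphisms on non-compact subsets of $X$, which is exactly why the paper does not attempt a direct verification but instead quotes Arens' theorem (for $X$ locally compact, locally connected and Hausdorff, the compact-open topology is the \emph{minimum} admissible group topology on $\Homeo(X)$) together with Di Concilio's theorem that the F-topology is a minimal admissible group topology, so that the two must coincide.

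Your argument is repairable, but the ``interior'' and ``end'' parts of $K$ cannot be treated separately. One correct route: by compactness of $K$ and regularity of $\overline{X}$, cover $K$ by finitely many basic open sets $W$ with $\overline{g_0}(\mathrm{cl}_{\overline{X}}(W)) \subseteq V$, reducing to conditions $\overline{g}(\mathrm{cl}_{\overline{X}}(W)) \subseteq V$. When $W$ is relatively compact in $X$ this is an honest compact-open condition; when $W = C \sqcup \{e < C\}$ for $C$ a component of $X \smallsetminus L$, one may assume $V$ is itself basic of the form $C' \sqcup \{e' < C'\}$ with $C'$ a component of $X \smallsetminus L'$, and then use that $\mathrm{cl}_X(C)$ is \emph{connected}: the inclusion $g(\mathrm{cl}_X(C)) \subseteq C'$ follows from the two open conditions $g(p) \in C'$ (for one fixed $p \in C$) and $g^{-1}(L') \cap \mathrm{cl}_X(C) = \varnothing$, the latter being open by continuity of inversion; this single inclusion then also forces $\overline{g}(e) < C'$ for \emph{every} end $e < C$, so the ends of $K$ lying in $W$ are handled simultaneously rather than one at a time. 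Two further imprecisions in your sketch: continuity of inversion in the compact-open topology requires local connectedness in addition to local compactness (this is Arens' hypothesis; it fails for general locally compact Hausdorff spaces), and the claim that the components of $X \smallsetminus g^{-1}(L_e)$ are ``identified'' with those of $X \smallsetminus g_0^{-1}(L_e)$ is not meaningful as stated, since perturbing a compact set can merge or split complementary components --- the usable statement is only that the component hit by $e$ contains a fixed connected set on which $g$ can be tested at a single point.
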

\begin{proof}
The fact that the left-hand map is a topological embedding follows from Proposition \ref{prop:extension-continuous-general} below applied to $X=S$. To deal with the right-hand map, first note that $\overline{S}$ is the Freudenthal compactification of $\widehat{S}$ (as well as of $S$), so we have a well-defined function
\begin{equation}
\label{eq:from-hat-to-bar}
\Homeo(\widehat{S}) \longrightarrow \Homeo(\overline{S})
\end{equation}
given by extending homeomorphisms uniquely. It is evidently a set-theoretic inverse for the restriction map $\Homeo(\overline{S}) \to \Homeo(\widehat{S})$; hence both \eqref{eq:from-hat-to-bar} and this restriction map are bijections. Now applying Proposition \ref{prop:extension-continuous-general} to $X = \widehat{S}$, we deduce that \eqref{eq:from-hat-to-bar} is a topological embedding. Since it is also a bijection, this means that it is a homeomorphism, and hence so is its inverse, which is the restriction map on the right-hand side of \eqref{eq:extending-homeomorphisms}.
\end{proof}

\begin{cor}
\label{cor:planar-ends-vs-marked-subspaces}
There is an isomorphism of topological groups $\Homeo(S) \cong \Homeo(\widehat{S},\cP(S))$.
\end{cor}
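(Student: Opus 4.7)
The plan is to leverage Proposition~\ref{prop:extension-continuous}, which already provides a continuous group embedding $\Homeo(S) \hookrightarrow \Homeo(\widehat{S})$, obtained as the composition of the topological embedding $\Homeo(S) \to \Homeo(\overline{S})$ with the homeomorphism $\Homeo(\overline{S}) \to \Homeo(\widehat{S})$. Both of these ingredient maps are group homomorphisms (extension to the Freudenthal compactification is functorial, and restriction to $\widehat{S}$ is clearly multiplicative), so the composite is an embedding of topological groups. It therefore remains only to identify its image as precisely the subgroup $\Homeo(\widehat{S},\cP(S)) \subseteq \Homeo(\widehat{S})$.

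To show that the image lies inside $\Homeo(\widehat{S},\cP(S))$, I would take $\varphi \in \Homeo(S)$ and observe that its unique extension $\overline{\varphi} \in \Homeo(\overline{S})$ preserves the subspace $\Ends_{np}(S)$ of non-planar ends, since non-planarity of an end is characterised topologically by the fact that no neighbourhood of it in $\overline{S}$ embeds into the plane. Hence $\overline{\varphi}$ also preserves the complement $\cP(S) = \Ends(S) \smallsetminus \Ends_{np}(S)$, so the restriction of $\overline{\varphi}$ to $\widehat{S}$ lies in $\Homeo(\widehat{S},\cP(S))$. Conversely, given $\psi \in \Homeo(\widehat{S},\cP(S))$, the condition $\psi(\cP(S)) = \cP(S)$ forces $\psi$ to preserve the complement $\widehat{S} \smallsetminus \cP(S) = S$, so $\psi$ restricts to a self-homeomorphism of $S$ whose image under the embedding $\Homeo(S) \hookrightarrow \Homeo(\widehat{S})$ is $\psi$ itself (by uniqueness of continuous extension).

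Combining these observations, the map $\Homeo(S) \to \Homeo(\widehat{S})$ is a topological group embedding whose image is precisely $\Homeo(\widehat{S},\cP(S))$, equipped with the subspace topology, giving the desired isomorphism of topological groups. I do not anticipate any substantive obstacle: the content is entirely captured by Proposition~\ref{prop:extension-continuous} together with the elementary observation that $\widehat{S} \smallsetminus \cP(S) = S$ and the topological characterisation of non-planar ends.
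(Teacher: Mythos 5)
Your proposal is correct and follows essentially the same route as the paper: both deduce the result from Proposition~\ref{prop:extension-continuous} by identifying the image of the composite embedding $\Homeo(S)\to\Homeo(\widehat{S})$ with $\Homeo(\widehat{S},\cP(S))$. You merely spell out in more detail the two-way verification (extensions preserve non-planar ends; conversely, a homeomorphism preserving $\cP(S)$ restricts to $S=\widehat{S}\smallsetminus\cP(S)$) that the paper leaves as an "observation".
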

\begin{proof}
This follows directly from Proposition \ref{prop:extension-continuous}, together with the observation that the image of the composite topological embedding \eqref{eq:extending-homeomorphisms} is precisely $\Homeo(\widehat{S},\cP(S))$, the subspace of $\Homeo(\widehat{S})$ of homeomorphisms sending $\cP(S)$ onto itself.
\end{proof}

\begin{rem}
\label{rmk:planar-ends-vs-marked-subspaces}
Corollary \ref{cor:planar-ends-vs-marked-subspaces} says that filling in the planar ends of a surface and then fixing them setwise does not change anything at the level of homeomorphism groups. This generalises the usual dichotomy between thinking of punctures (isolated planar ends) either as punctures or as marked points.
\end{rem}

\begin{prop}
\label{prop:extension-continuous-general}
Let $X$ be a connected, locally connected, locally compact, Hausdorff and second countable space, write $\overline{X}$ for its Freudenthal compactification and give all homeomorphism groups the compact-open topology. Then the injective function $\Homeo(X) \to \Homeo(\overline{X})$ given by unique extensions of homeomorphsims is a topological embedding, in particular it is continuous.
\end{prop}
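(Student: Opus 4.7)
The plan is to show separately that (i) the subspace topology induced on $\Homeo(X)$ via $\Phi$ from the compact-open topology on $\Homeo(\overline{X})$ is \emph{at least as fine as} the compact-open topology on $\Homeo(X)$, and (ii) $\Phi$ is continuous, so the two topologies coincide and $\Phi$ is a topological embedding. Under the stated hypotheses, $\overline{X}$ is compact, Hausdorff, and second countable, hence metrisable; fix a compatible metric $d$. Then the compact-open topology on $\Homeo(\overline{X})$ coincides with uniform convergence in $d$, and the compact-open topology on $\Homeo(X)$ coincides with uniform convergence (in $d$) on compact subsets of $X$.

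Part (i) is essentially immediate: every subbasic open $V(K,U) = \{f \in \Homeo(X) : f(K) \subseteq U\}$ in the compact-open topology on $\Homeo(X)$, with $K \subseteq X$ compact and $U \subseteq X$ open, is the $\Phi$-preimage of the subbasic open set $\{g \in \Homeo(\overline{X}) : g(K) \subseteq U\}$, using that $K$ remains compact and $U$ remains open in $\overline{X}$ (since $X$ is open in $\overline{X}$).

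For part (ii), given $f \in \Homeo(X)$ and $\epsilon > 0$, I would construct a compact $K \subseteq X$ and $\delta > 0$ such that every $g \in \Homeo(X)$ with $d(g(x), f(x)) < \delta$ for all $x \in K$ also satisfies $d(\overline{g}(y), \overline{f}(y)) < \epsilon$ for all $y \in \overline{X}$. Such a condition on $g$ defines a neighbourhood of $f$ in the compact-open topology on $\Homeo(X)$ (it contains a finite intersection of sets $V(\{x\}, B(f(x), \delta/2))$ as $x$ ranges over a $\delta/4$-net in $K$), so this gives continuity of $\Phi$ at $f$. The construction goes as follows: using uniform continuity of $\overline{f}$ on the compact space $\overline{X}$ together with compactness and total disconnectedness of $\Ends(X) \subseteq \overline{X}$, partition $\Ends(X) = A_1 \sqcup \cdots \sqcup A_k$ into clopen pieces admitting pairwise disjoint open neighbourhoods $\widetilde{A}_i = A_i \sqcup W_i \subseteq \overline{X}$ (where each $W_i \subseteq X$ is a union of non-compact components of $X \setminus K_0$ for a single compact $K_0 \subseteq X$) such that both $\widetilde{A}_i$ and $\overline{f}(\widetilde{A}_i)$ have $d$-diameter less than $\epsilon/3$. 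Then enlarge $K_0$ slightly to a compact $K \subseteq X$ whose $f$-image contains a compact neighbourhood of $f(K_0)$ in $X$, and pick $\delta$ smaller than this buffer and than half the pairwise distances between the boundaries $\partial W_i$ (in $X$).

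The main step, and the principal obstacle, is to verify that the condition $d(g,f) < \delta$ on $K$ forces $g(W_i) \subseteq f(W_i)$ for each $i$ --- equivalently, that the permutation of non-compact components of $X \setminus K_0$ induced by passing from $f$ to $g$ is the identity. I would argue this using that each $\partial W_i \subseteq K_0$ is compact, so $g(\partial W_i)$ lies in a $\delta$-neighbourhood of $f(\partial W_i)$; by the choice of $\delta$ small compared to pairwise separations of the $f(\partial W_i)$, the connectedness of $g(W_i)$ then forces it into the same component of $X \setminus f(K_0)$ as $f(W_i)$. Granted this, for $y \in \widetilde{A}_i$ both $\overline{g}(y)$ and $\overline{f}(y)$ lie in $\overline{f}(\widetilde{A}_i)$, which has diameter $< \epsilon/3$, while for $y \in K_0$ we have $d(\overline{g}(y), \overline{f}(y)) = d(g(y), f(y)) < \delta < \epsilon$ directly, giving the desired uniform $\epsilon$-closeness of $\overline{g}$ and $\overline{f}$ throughout $\overline{X}$.
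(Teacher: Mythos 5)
Your overall architecture is sound and genuinely different from the paper's: the paper disposes of this proposition in a few lines by quoting two results on \emph{minimal admissible group topologies} (Arens' theorem that for locally compact, locally connected Hausdorff $X$ the compact-open topology is the minimum topology making multiplication and evaluation continuous, and Di Concilio's theorem that the F-topology induced from $\Homeo(\overline{X})$ is also minimal), whereas you attempt a direct metric argument. Your part (i) is correct, and the reduction of continuity to a uniform estimate on $\overline{X}$ is the right shape. (One small slip there: the set $\{g : \sup_K d(g,f)<\delta\}$ is not exhibited as a compact-open neighbourhood by finitely many conditions $V(\{x\},B(f(x),\delta/2))$ at net points --- without equicontinuity $g$ can oscillate between net points --- you need finitely many \emph{small compact} pieces $K_j$ covering $K$ with $f(K_j)\subseteq B(y_j,\delta/3)$ and the sets $V(K_j,B(y_j,\delta/3))$.)

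The genuine gap is exactly at the step you flag as the principal obstacle, and the justification you sketch does not close it. First, the literal claim ``$d(g,f)<\delta$ on $K$ forces $g(W_i)\subseteq f(W_i)$'' is false: for $X=\bR^2$, $f=\mathrm{id}$, $K_0=\overline{B(0,1)}$ and $g$ a translation by $\delta/2$, the image of the unbounded component of $X\smallsetminus K_0$ meets $K_0$. What you actually need is that $g(W_i)$ stays in a small neighbourhood of $f(W_i)$, i.e.\ that $g$ maps no point of $X\smallsetminus K$ deep into $f(K_0)$. Your connectedness argument cannot deliver this: $g(W_i)$ is a component of $X\smallsetminus g(K_0)$, \emph{not} of $X\smallsetminus f(K_0)$, and the hypothesis controls $g$ only on $K$ --- it says nothing about where $g$ sends $X\smallsetminus K$, and injectivity of $g$ alone does not force $g(K)\supseteq f(K_0)$ for a general space satisfying the hypotheses (this is a surjectivity-stability statement, which for manifolds one would extract from degree theory or the Jordan curve theorem, neither of which is available here). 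The clean repair is, ironically, the same Arens theorem the paper cites: since $X$ is locally compact, locally connected and Hausdorff, $\Homeo(X)$ with the compact-open topology is a \emph{topological group}, so you may shrink your neighbourhood of $f$ by additionally imposing $d(g^{-1},f^{-1})<\delta'$ on the compact set $f(K_0)$; choosing $\delta'$ less than the distance from $K_0$ to $\overline{X}\smallsetminus \mathrm{int}_X(K)$ then gives $g^{-1}(f(K_0))\subseteq K$, hence $g(X\smallsetminus K)\cap f(K_0)=\varnothing$, after which your connectedness and diameter estimates go through. Without some such input, the main step fails.
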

\begin{proof}
We begin by rephrasing the statement. The topology on $\Homeo(X)$ induced from the compact-open topology on $\Homeo(\overline{X})$ via the injection $\Homeo(X) \to \Homeo(\overline{X})$ is called the \emph{F-topology}. What we must show is that the F-topology coincides with the compact-open topology. (For the weaker statement that $\Homeo(X) \to \Homeo(\overline{X})$ is continuous, rather than a topological embedding, we would just have to show that the compact-open topology is finer than the F-topology.)

The collection of topologies on $\Homeo(X)$ making both the group operation and the evaluation map $\Homeo(X) \times X \to X$ continuous was studied in \cite{Arens1946}, where it was proven that there exists a \emph{minimum} such topology if $X$ is locally compact and Hausdorff. Moreover, if $X$ is also locally connected, this minimal topology is the compact-open topology. On the other hand, it is proven in \cite{DiConcilio2006} that, if $X$ is rim-compact, Hausdorff and $\overline{X}$ is locally connected at any ideal point, then the F-topology is minimal. Thus, if both sets of hypotheses are satisfied, we may conclude that the F-topology coincides with the compact-open topology, as desired. Indeed, the assumptions of the proposition do imply both sets of hypotheses: in particular rim-compactness is weaker than local compactness (which we have assumed) and our assumptions also imply that the Freudenthal compactification $\overline{X}$ is locally connected at any ideal point; see the paragraph before Theorem 9 in \cite{DiConcilio2013}.
\end{proof}

\bibliographystyle{alpha}
\bibliography{references.bib}

\end{document}